\newtheorem{thm}{Theorem}[section]
\newtheorem{prop}[thm]{Proposition}
\newtheorem{cor}[thm]{Corollary}
\newtheorem{lem}[thm]{Lemma}
\newtheorem{theorem}[thm]{Theorem}
\newtheorem{proposition}[thm]{Proposition}
\newtheorem{corollary}[thm]{Corollary}
\newtheorem{step}{Step}
\theoremstyle{definition}
\newtheorem{df}[thm]{Definition}
\newtheorem{rem}[thm]{Remark}
\newtheorem{expl}[thm]{Example}
\numberwithin{equation}{section}
\numberwithin{figure}{section}
\numberwithin{table}{section}
\newcommand{\bC}{{\mathbb{C}}}
\newcommand{\bP}{{\mathbb{P}}}
\newcommand{\bQ}{{\mathbb{Q}}}
\newcommand{\bR}{{\mathbb{R}}}
\newcommand{\bZ}{{\mathbb{Z}}}
\newcommand{\scM}{{\mathcal{M}}}
\newcommand{\frakm}{\mathfrak{m}}
\newcommand{\frakX}{\mathfrak{X}}
\newcommand{\XG}{X_{\Gamma}}
\newcommand{\YG}{Y_{\Gamma}}
\newcommand{\DeltaG}{\Delta_{\Gamma}}
\newcommand{\omegatilde}{\widetilde{\omega}}
\newcommand{\Phitilde}{\widetilde{\Phi}}
\newcommand{\Grtilde}{\widetilde{\Gr}}
\newcommand{\bCx}{\bC^*}
\newcommand{\Gr}{\operatorname{Gr}}
\newcommand{\GIT}{/\!\!/}
\newcommand{\bigGIT}{\big/\!\!\big/}
\newcommand{\GITl}{\backslash \!\! \backslash}
\newcommand{\bld}{\boldsymbol}
\newcommand{\Euc}{\mathbb{R}^3}
\newcommand{\pol}{\mathcal{M}_{\bld r}}
\newcommand{\su}{\mathfrak{su}(2)}
\newcommand{\tr}{\operatorname{tr}}
\newcommand{\Sing}{\operatorname{Sing}}
\newcommand{\Spec}{\operatorname{Spec}}
\newcommand{\Int}{\operatorname{Int}}
\newcommand{\PD}{\operatorname{PD}}
\newcommand{\po}{\mathfrak{PO}}
\newcommand{\dge}{\rotatebox[origin=c]{45}{$\ge$}}
\newcommand{\uge}{\rotatebox[origin=c]{315}{$\ge$}}
\newcommand{\dndots}{\hbox to1.65em{\rotatebox[origin=c]{315}{$\cdots$}}}
\newcommand{\llmd}[2]{\hbox to1.65em{$ \lambda_{#2}^{(#1)}$}}
\newcommand{\br}{{\boldsymbol{r}}}
\newcommand{\bx}{{\boldsymbol{x}}}
\newcommand{\frakg}{{\mathfrak{g}}}
\newcommand{\fraks}{{\mathfrak{s}}}
\newcommand{\fraku}{{\mathfrak{u}}}
\newcommand{\fraksu}{{\mathfrak{su}}}
\newcommand{\PGL}{PGL}
\newcommand{\xto}{\xrightarrow}
\newcommand{\simto}{\xto{\sim}}
\newcommand{\vin}{\rotatebox{90}{$\in$}}
\newcommand{\lb}{\left(}
\newcommand{\rb}{\right)}
\newcommand{\N}{{2n-4}}
\title[Toric degenerations of integrable systems 
       on Grassmannians]
      {Toric degenerations of integrable systems 
       on Grassmannians and polygon spaces}
\author{Yuichi Nohara and Kazushi Ueda}
\date{}
\subjclass[2010]{Primary~37J35, Secondary~53D37}
\begin{document}
\maketitle


\begin{abstract}
We introduce a completely integrable system
on the Grassmannian of 2-planes
in an $n$-space
associated with any triangulation
of a polygon with $n$ sides,
and compute the potential function
for its Lagrangian torus fiber.
The moment polytopes of this system
for different triangulations
are related by an integral piecewise-linear transformation,
and the corresponding potential functions are related
by its geometric lift
in the sense of Berenstein and Zelevinsky
\cite{Berenstein-Zelevinsky_TPM}.
\end{abstract}

\section{Introduction}
 \label{sc:introduction}

Let $n$ be an integer greater than two and
$
 \br = (r_1, \dots, r_n)
$
be a sequence of positive real numbers
satisfying
\begin{equation}  \label{eq:non-empty}
  r_i < r_1 + \dots + r_{i-1} + r_{i+1} + \dots + r_n
\end{equation}
for each $i = 1, \dots, n$.
The {\em polygon space} $\pol$ is defined by
\[
  \pol
  = \Bigl. 
    \Bigl\{ \bx = (x_1, \dots, x_n) \in \prod_{i=1}^n S^2(r_i) \, 
     \Bigm| \,
        x_1 + \dots + x_n = 0 \, \Bigr\}
    \Bigr/ SO(3),
\]
where $S^2(r_i) \subset \Euc$ is the two-sphere of radius $r_i$
centered at the origin,
and $SO(3)$ acts diagonally on $\prod_{i=1}^n S^2(r_i)$.
A point $\bx \in \pol$ is regarded
as a congruence class of a closed spatial polygon
with sides $x_1, \dots, x_n$.
The condition \eqref{eq:non-empty} implies
$\pol \ne \emptyset$ and $\dim_{\mathbb{R}} \pol = 2(n-3)$.
The variety $\pol$ is singular if and only if 
$
 \sum_{i=1}^n \epsilon_i r_i = 0
$
for some
$
 (\epsilon_1, \dots, \epsilon_n) \in \{ \pm 1 \}^n, 
$
and the singular locus of $\pol$ consists of
$\bx = (x_1, \dots, x_n) \in \pol$
satisfying
$
 \sum_{i=1}^n \epsilon_i x_i = 0.
$
In particular,
the polygon space $\pol$
has at worst isolated singularities
for any $\br$.
The polygon space $\pol$ has a natural structure
of a projective (and hence K\"{a}hler) variety
coming from the identification
$$
 \pol \cong (\bC \bP^1)^n \GIT \PGL(2, \bC).
$$
The GIT quotient
on the right hand side is a compactification
of the configuration space of $n$ points on $\bC \bP^1$,
which has a long history of investigation
going back to the 19th century.
Furthermore, the Gelfand-MacPherson correspondence
\cite{Gelfand-MacPherson_GGGD}
gives an isomorphism between the polygon space 
and the symplectic reduction of the Grassmannian $\Gr(2,n)$ 
of two-planes in $\bC^n$ by a maximal torus  $T_{U(n)}$ of $U(n)$:
\begin{equation} \label{eq:symp_red1}
 \pol \cong \Gr(2,n) \GIT T_{U(n)}.
\end{equation}
It is also known
\cite{Foth_MSP,
Howard-Manon-Millson,
Jeffrey_EMS,
Millson-Poritz,
Treloar}
that $\pol$ is symplectomorphic to 
the moduli space of parabolic $SU(2)$-bundles on $\mathbb{CP}^1$
for sufficiently small $\br$.

Recall that
a completely integrable system
on a symplectic manifold $(X, \omega)$
of dimension $2N$ is an $N$-tuple of functions 
$$
  \Phi = (\varphi_1, \dots, \varphi_N) : X \longrightarrow \mathbb{R}^N
$$
which are functionally independent
(i.e., $d\varphi_1, \dots, d\varphi_N$ are linearly independent)
on an open dense subset and
mutually Poisson commutative;
$\{ \varphi_i, \varphi_j \} = 0$
for any $i, j$.
The Arnold-Liouville theorem states that generic fibers of $\Phi$ are
Lagrangian tori if the fibers are compact and connected.
The toric moment map on a toric manifold
is an example of a completely integrable system.

Fix a convex planar polygon $P$
with $n$ sides
called the {\em reference polygon}.
The set of triangulations of $P$ is
in natural one-to-one correspondence
with the set of trivalent trees
with $n$ leaves
by sending a triangulation
to its dual graph $\Gamma$.
We often say a ``triangulation'' $\Gamma$
by abuse of notation.
The set of triangulations $\Gamma$
can naturally be identified
with the set of vertices
of the {\em Stasheff associahedron},
which in turn is identified
with the set of ways
to parenthesize a product of $n-1$ elements
into binomial operations.

For any triangulation $\Gamma$
of the reference polygon,
Kapovich and Millson
\cite{Kapovich-Millson_SGPES} and 
Klyachko \cite{Klyachko_SP}
constructed a completely integrable system
\[
  \Phi_{\Gamma} : \pol \to \mathbb{R}^{n-3}
\]
called the {\it bending system}.
For a particular triangulation $\Gamma$
called the {\em caterpillar}
(see Figure \ref{fg:caterpillar}),
the bending system comes from
the {\em Gelfand-Cetlin system}
\cite{Guillemin-Sternberg_GCS}
on the Grassmannian $\Gr(2,n)$ 
through the symplectic reduction \eqref{eq:symp_red1}
\cite{Hausmann-Knutson_PSG}.

The first main result in this paper
is the following:

\begin{thm}\label{th:nohara_system}
 For any triangulation $\Gamma$ of the reference polygon,
 there exists a completely integrable system
 $$
  \Psi_\Gamma : \Gr(2,n) \to \bR^\N
 $$
 which induces the bending system
 $\Phi_{\Gamma}$ on $\pol$
 through the symplectic reduction \eqref{eq:symp_red1}.
\end{thm}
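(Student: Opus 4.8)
The plan is to realize the bending Hamiltonians as the descent of $U(2)$- and torus-invariant functions on $\Gr(2,n)$ built from partial sums of the rank-one Hermitian matrices attached to the columns of a unitary frame, and to complete them to a full system using the torus moment map. Concretely, I would model $\Gr(2,n)$ as the symplectic reduction of $\mathrm{Mat}_{2 \times n}(\bC) = (\bC^2)^n$, with its standard symplectic form, by the left $U(2)$-action at level $\mathrm{Id}_2$, and write $v_1, \dots, v_n \in \bC^2$ for the columns of a frame $Z$. The torus $T_{U(n)}$ acts by $v_i \mapsto e^{\sqrt{-1}\theta_i} v_i$ with moment map components $\mu_i = |v_i|^2$, and the Gelfand-MacPherson isomorphism \eqref{eq:symp_red1} identifies the reduced space with $\pol$, the side vector $x_i \in \Euc \cong \su^*$ being the trace-free part of $v_i v_i^*$.

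Next I would attach functions to the triangulation. Removing the internal edges of the trivalent tree $\Gamma$ cuts the leaf set $\{1, \dots, n\}$ into a laminar family $S_1, \dots, S_{n-3}$ (any two members are nested or disjoint), one subset $S_k$ per diagonal. Setting $H_{S_k} = \sum_{i \in S_k} v_i v_i^*$ and letting $p$ be the conjugation-invariant half eigenvalue-gap $p(H) = \tfrac12 \sqrt{(\tr H)^2 - 4\det H}$, I define $\widetilde\phi_k = p(H_{S_k})$. Since $p$ is invariant under conjugation, $\widetilde\phi_k$ is invariant under the global $U(2)$ (so it descends to $\Gr(2,n)$) and under $T_{U(n)}$ (as $v_i v_i^*$ is phase-invariant, so it descends to $\pol$); on the reduced space it becomes $|\sum_{i \in S_k} x_i|$, which is exactly the bending function for the diagonal $S_k$. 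The candidate system is then $\Psi_\Gamma = (\mu_1, \dots, \mu_{n-1}, \widetilde\phi_1, \dots, \widetilde\phi_{n-3})$, with $\mu_n$ omitted because $\sum_i \mu_i = \tr(Z^* Z) = 2$ is constant; this gives $(n-1) + (n-3) = 2n-4 = \tfrac12 \dim_{\bR} \Gr(2,n)$ functions, the correct count.

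Then I would verify Poisson commutativity, which is the crux. The brackets $\{\mu_i, \mu_j\}$ vanish since they are components of the torus moment map, and $\{\mu_i, \widetilde\phi_k\} = 0$ because $\widetilde\phi_k$ is $T_{U(n)}$-invariant. For $\{\widetilde\phi_k, \widetilde\phi_\ell\}$ I would use the laminar structure: $H_S$ is the moment map for the $U(2)$-action on the columns indexed by $S$, so the conjugation-invariant function $p(H_S)$ is a Casimir for that action, giving $\{\widetilde\phi_S, (H_S)_{ab}\} = 0$ for every matrix entry. If $S_k$ and $S_\ell$ are disjoint they involve disjoint columns and hence Poisson commute in $(\bC^2)^n$; if $S_k \subset S_\ell$, writing $H_{S_\ell} = H_{S_k} + H_{S_\ell \setminus S_k}$ and using that $\widetilde\phi_k$ commutes with the entries of $H_{S_k}$ (Casimir) and with those of $H_{S_\ell \setminus S_k}$ (disjoint columns) shows $\{\widetilde\phi_k, (H_{S_\ell})_{ab}\} = 0$, whence $\{\widetilde\phi_k, \widetilde\phi_\ell\} = 0$. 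As all these functions are $U(2)$-invariant, the vanishing descends to $\Gr(2,n)$.

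Finally I would check functional independence on a dense open set and assemble the conclusion. Since $\Phi_\Gamma$ is a completely integrable system on $\pol$, its differentials $d\phi_1, \dots, d\phi_{n-3}$ are independent on a dense open $U \subset \pol$; at a point of the preimage of $U$ where the $T_{U(n)}$-action is locally free, a dependence $\sum a_i\, d\mu_i + \sum b_k\, d\widetilde\phi_k = 0$ restricts along $\mu^{-1}(c)$ to $\sum b_k\, d\phi_k = 0$ (the $d\mu_i$ vanish there), forcing $b_k = 0$ by independence on $U$, and then $a_i = 0$ since the $d\mu_i$ are independent on the locally free locus. Thus $\Psi_\Gamma$ is a completely integrable system, and by construction its components $\widetilde\phi_k$ induce the bending system $\Phi_\Gamma$ under \eqref{eq:symp_red1}. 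The main obstacle is the bending-bending commutativity, which the Casimir-plus-laminar argument resolves cleanly; a secondary care point is that the half-gap $p$ is smooth only off the nowhere-dense locus $\widetilde\phi_k = 0$, which affects neither the commutativity identities (valid where defined, and then by continuity) nor the independence on a dense open set.
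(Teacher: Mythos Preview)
Your proposal is correct and follows essentially the same strategy as the paper: both build the diagonal functions by Thimm's method, using that the index sets attached to non-crossing diagonals form a laminar family so that conjugation-invariant functions of the corresponding partial moment maps Poisson-commute (your Casimir-plus-laminar argument is exactly the content of the paper's Lemma~\ref{lm:involutivity1}), and both complete the system with the components of the $T_{U(n)}$-moment map and deduce functional independence from that of the bending system on the quotient.

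The only difference worth noting is one of perspective. The paper works from the $U(n)$-side, defining the diagonal functions as eigenvalues of the $|I_\alpha|\times|I_\alpha|$ block $\mu_{U_\alpha}$ of the $U(n)$-moment map, and only later (in the proof of Proposition~\ref{prop:reduction_CIS}) observes that these coincide with the eigenvalues of the $2\times 2$ matrix $\sum_{i\in I_\alpha} v_i v_i^*$ that you call $H_{S_k}$; you work on the $U(2)$-side from the outset, which makes the descent to the bending Hamiltonians more transparent. A second minor difference is the choice of invariant: the paper takes the second eigenvalue $\lambda_{\alpha,2}$, so its $\psi_{d_\alpha}$ induces $\varphi_{d_\alpha}$ only up to sign and an additive constant in the torus variables, whereas your half-gap $p(H_{S_k})$ gives the bending length on the nose. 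The two choices differ by a linear combination of the $\mu_i$, so the resulting integrable systems are equivalent.
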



Triangulations of the reference polygon are
related to toric degenerations of $\Gr(2,n)$
by Speyer and Sturmfels \cite{Speyer-Sturmfels_TG}.
On the other hand,
the notion of a {\em toric degeneration of an integrable system}
is introduced in \cite{Nishinou-Nohara-Ueda_TDGCSPF}
(see Definition \ref{def:toric_degeneration}).
The second main result in this paper is the following:

\begin{thm}\label{th:toricdeg_nohara}
 For any triangulation $\Gamma$ of the reference polygon,
 the completely integrable system $\Psi_{\Gamma}$ on $\Gr(2,n)$
 admits a toric degeneration. 
\end{thm}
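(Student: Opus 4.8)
The plan is to produce the toric degeneration by combining the algebraic degeneration of $\Gr(2,n)$ attached to the tree $\Gamma$ by Speyer and Sturmfels \cite{Speyer-Sturmfels_TG} with the gradient-Hamiltonian flow technique of \cite{Nishinou-Nohara-Ueda_TDGCSPF}. When $\Gamma$ is the caterpillar, this should recover the toric degeneration of the Gelfand-Cetlin system on $\Gr(2,n)$ through the Gelfand-MacPherson correspondence \eqref{eq:symp_red1}, and the point is to treat all triangulations uniformly.

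First I would recall that the trivalent tree $\Gamma$ determines a maximal cone of the tropical Grassmannian of $\Gr(2,n)$; a weight vector in the relative interior of this cone yields a Gröbner (weight) degeneration of the Plücker homogeneous coordinate ring whose initial ideal is a prime binomial ideal. This gives a flat projective family $\pi \colon \frakX \to \bC$ with $\pi^{-1}(t) \cong \Gr(2,n)$ for $t \neq 0$ and central fiber $\pi^{-1}(0) = \XG$ a projective toric variety. I would then check that the moment polytope of $\XG$ coincides with the image $\DeltaG = \Psi_\Gamma(\Gr(2,n))$ of the integrable system; this is a combinatorial identification between the polytope read off from $\Gamma$ and the polytope cut out by the triangle inequalities at the triangles of the reference polygon together with the edge-length constraints.

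Next I would equip $\frakX$ with a Kähler form restricting to the given forms on the fibers and consider the gradient-Hamiltonian vector field of the real part of the base coordinate. Following \cite{Nishinou-Nohara-Ueda_TDGCSPF}, its flow integrates, away from a closed subset of positive codimension, to a continuous map $\phi_\Gamma \colon \Gr(2,n) \to \XG$ that restricts to a symplectomorphism between open dense subsets. The heart of the argument is to verify that $\phi_\Gamma$ intertwines $\Psi_\Gamma$ with the toric moment map of $\XG$, i.e. $\Psi_\Gamma = \Phi_{\XG} \circ \phi_\Gamma$ on the relevant open locus. For this I would show that the components of $\Psi_\Gamma$ — the residual $T_{U(n)}$-moment map recording the edge lengths, together with the lifted bending Hamiltonians recording the diagonal lengths indexed by the internal edges of $\Gamma$ — extend to continuous functions on $\frakX$ that are preserved by the gradient-Hamiltonian flow and restrict on $\XG$ to the toric moment coordinates.

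The step I expect to be the main obstacle is precisely this last compatibility. One must control $\Psi_\Gamma$ at the level of the individual continuous Hamiltonians, not merely at the level of the moment polytope, and match them with the torus-invariant functions on the central fiber. The matching is most delicate along the strata where the triangle inequalities degenerate to equalities: these correspond at once to the non-smooth locus of the bending system and to the singular locus of $\XG$, where the gradient-Hamiltonian flow fails to be smooth, so the continuity of $\phi_\Gamma$ and of the intertwining there will require separate care.
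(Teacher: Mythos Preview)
Your outline has the right ingredients but is missing the key structural idea that makes the argument work. The difficulty you flag at the end is not a matter of ``separate care'' at the boundary; it is a genuine obstruction to the approach as stated.

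The components $\psi_{d_\alpha} = \lambda_{\alpha,2}$ of $\Psi_\Gamma$ are eigenvalues of the moment maps of the \emph{non-abelian} groups $U_{d_\alpha} \cong U(|I_\alpha|)$. For the gradient-Hamiltonian flow to preserve such a function, one needs the family itself to be $U_{d_\alpha}$-equivariant, so that the flow commutes with the action and hence with its moment map. But the one-parameter Speyer--Sturmfels degeneration (i.e.\ the restriction of the multi-parameter family to the diagonal $t_1 = \cdots = t_{n-3} = t$) is only invariant under the maximal torus $T_{U(n)}$, not under the groups $U_{d_\alpha}$. So there is no reason for the gradient-Hamiltonian flow of that family to carry the eigenvalue functions $\lambda_{\alpha,2}$ to anything meaningful on the central fiber, and your proposed intertwining $\Psi_\Gamma = \Phi_{\XG} \circ \phi_\Gamma$ has no mechanism behind it.

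The paper's resolution is to degenerate \emph{in stages}: use the full $(n-3)$-parameter family $\frakX_\Gamma \to \bC^{n-3}$ and a piecewise-linear path that sets $t_1, t_2, \ldots$ to zero one at a time. The point is that the $\alpha$-th one-parameter subfamily (only $t_\alpha$ varying, with $t_1 = \cdots = t_{\alpha-1} = 0$ and $t_{\alpha+1} = \cdots = t_{n-3} = 1$) \emph{is} invariant under $U_{d_\beta}$ for every $\beta \ge \alpha$, so its gradient-Hamiltonian flow preserves the remaining eigenvalue functions $\lambda_{\beta,2}$. One then checks that on the central fiber of stage $\alpha$, the function $\lambda_{\alpha,2}$ coincides with the moment map of a genuine circle action $S^1_{d_\alpha}$ appearing in the GIT description of that fiber; this torus moment map is then automatically preserved at all later stages. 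Iterating, one replaces the eigenvalue functions by torus moment maps one by one until, at the end, $\Psi_\Gamma$ has become the toric moment map of $\XG$. This staged structure, together with the matching $\lambda_{\alpha,2} = \nu_\alpha$ at each transition, is the content you are missing.
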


Since the toric degeneration of $\Gr(2,n)$ is $T_{U(n)}$-invariant
for any triangulation $\Gamma$,
it induces a toric degeneration of $\pol$.
The deformation of $\Psi_{\Gamma}$
in Theorem \ref{th:toricdeg_nohara}
is $T_{U(n)}$-invariant,
and we obtain the following:

\begin{cor}\label{cr:toricdeg_pol}
 The toric degeneration of
 $\Psi_{\Gamma} : \Gr(2,n) \to \mathbb{R}^\N$
 in Theorem $\ref{th:toricdeg_nohara}$ induces a toric degeneration
 of the bending system $\Phi_\Gamma : \pol \to \bR^{n-3}$.
\end{cor}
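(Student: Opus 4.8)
The plan is to push the symplectic reduction by $T_{U(n)}$ through every stage of the toric degeneration produced in Theorem~\ref{th:toricdeg_nohara}, exploiting the $T_{U(n)}$-invariance of all the data involved. Recall from Definition~\ref{def:toric_degeneration} that a toric degeneration of $\Psi_\Gamma$ consists of a flat family $\frakX \to \bC$ with general fiber $\Gr(2,n)$ degenerating to a toric variety $X_0$, a relative K\"{a}hler form, and a gradient-Hamiltonian flow producing a continuous map $\phi \colon \Gr(2,n) \to X_0$ which is a symplectomorphism away from a measure-zero subset and satisfies $\Psi_\Gamma = \Phi_0 \circ \phi$, where $\Phi_0 \colon X_0 \to \bR^\N$ is the toric moment map. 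Since the family, its polarization, and the degenerating map to $\bC$ are all $T_{U(n)}$-invariant, the torus $T_{U(n)}$ acts on $\frakX$ fiberwise, restricting to the given action on $\Gr(2,n)$ and sitting inside the big torus of $X_0$ on the central fiber.

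First I would perform the fiberwise symplectic reduction. Reducing the $T_{U(n)}$-invariant family $\frakX \to \bC$ at the appropriate moment level gives a flat family $\frakX \GIT T_{U(n)} \to \bC$ whose general fiber is $\Gr(2,n) \GIT T_{U(n)} \cong \pol$ by \eqref{eq:symp_red1}, and whose central fiber is $X_0 \GIT T_{U(n)}$. Because $T_{U(n)}$ embeds into the big torus of $X_0$, the quotient $X_0 \GIT T_{U(n)}$ is again a toric variety, with residual torus $T_{X_0}/T_{U(n)}$; this is precisely the toric degeneration of $\pol$ noted before the statement.

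Next I would reduce the gradient-Hamiltonian flow itself. The essential point is that the gradient-Hamiltonian vector field is built from the relative K\"{a}hler metric and the $T_{U(n)}$-invariant function $\operatorname{Re} w$ on the base $\bC$, so it is $T_{U(n)}$-invariant and its flow $\phi$ is $T_{U(n)}$-equivariant. Hence $\phi$ descends to a continuous map $\overline{\phi} \colon \pol \to X_0 \GIT T_{U(n)}$, and since reduction commutes with the flow on the regular locus, $\overline{\phi}$ is again a symplectomorphism off a measure-zero subset. Finally, both $\Psi_\Gamma$ and $\Phi_0$ are $T_{U(n)}$-invariant and descend to $\Phi_\Gamma$ and to the moment map $\overline{\Phi}_0$ of the toric variety $X_0 \GIT T_{U(n)}$ respectively, the latter being the residual-torus moment map obtained by reduction of $\Phi_0$. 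Passing to quotients in the identity $\Psi_\Gamma = \Phi_0 \circ \phi$ then yields $\Phi_\Gamma = \overline{\Phi}_0 \circ \overline{\phi}$, verifying the axioms of Definition~\ref{def:toric_degeneration} for $\Phi_\Gamma$.

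The main obstacle I anticipate is the careful verification that symplectic reduction commutes with the gradient-Hamiltonian flow, i.e.\ that $\overline{\phi}$ is genuinely the gradient-Hamiltonian flow of the reduced family and stays a symplectomorphism on a dense open set. One must check that the reduction is taken at a regular value, so that the quotients are smooth away from the at-worst isolated singularities of $\pol$; that the measure-zero non-symplectomorphic locus of $\phi$ projects to a measure-zero set downstairs; and that $\overline{\Phi}_0$ indeed coincides with the residual-torus moment map on $X_0 \GIT T_{U(n)}$. Once these compatibilities are in place, the remaining bookkeeping is routine.
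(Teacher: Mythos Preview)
Your proposal is correct and takes essentially the same approach as the paper: the paper's argument is simply that the family, the K\"{a}hler form, the maps $\widetilde{\Psi}^{(\alpha)}_\Gamma$, and the gradient-Hamiltonian flows are all $T_{U(n)}$-invariant, so the entire structure descends under symplectic reduction at level $2\br$. The paper states this in one sentence before the corollary in the introduction and again as a one-line corollary at the end of Section~\ref{sc:deg_CIS}; you have spelled out the same reduction-commutes-with-everything argument in more detail, including the points one should check (regularity of the level, behavior of the exceptional locus, identification of the reduced moment map), but there is no substantive difference in strategy.

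One small discrepancy worth noting: the paper's degeneration is a multi-parameter family $\frakX_\Gamma \to \bC^{n-3}$ restricted to a piecewise-linear path, with the flow broken into stages $\phi^{(\alpha)}$, rather than a single one-parameter family $\frakX \to \bC$ with a single flow $\phi$ as you describe. This does not affect your argument, since $T_{U(n)}$-invariance holds at every stage and the reductions compose, but you may want to phrase things in terms of the stage-by-stage construction to match the paper precisely.
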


Kamiyama and Yoshida \cite{Kamiyama-Yoshida} and
Howard, Manon, and Millson \cite{Howard-Manon-Millson} also
studies the relation between toric degenerations of polygon spaces and 
bending systems from a slightly different point of view.

For a triangulation $\Gamma$
of the reference polygon,
let $X_\Gamma$ be the toric variety
obtained as the central fiber
of the corresponding toric degeneration of $\Gr(2,n)$.
The image $\Delta_\Gamma$
of the toric moment map of $X_\Gamma$ coincides
with the moment polytope
of the integrable system
$
 \Psi_\Gamma : \Gr(2,n) \to \bR^\N
$
by Theorem \ref{th:toricdeg_nohara}.

\begin{thm} \label{th:piecewise-linear}
For any pair $(\Gamma, \Gamma')$
of triangulations of the reference polygon,
there is a piecewise-linear automorphism
$$
 T_{\Gamma, \Gamma'} : \bR^\N \to \bR^\N
$$
of the affine space
such that
$
 T_{\Gamma, \Gamma'}(\Delta_{\Gamma'}) = \Delta_\Gamma.
$
The map $T_{\Gamma, \Gamma'}$ is defined over $\bZ$ if 
$\Delta_\Gamma$ is an integral polytope.
\end{thm}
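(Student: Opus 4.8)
The plan is to reduce to the case in which $\Gamma$ and $\Gamma'$ differ by a single \emph{flip}, and to realize the resulting change of coordinates as the tropicalization of a three-term Plücker relation. First I would fix, for each triangulation $\Gamma$, the coordinates on $\bR^\N$ in which $\Delta_\Gamma$ is presented by the toric degeneration of Theorem \ref{th:toricdeg_nohara}: the ``edge'' coordinates coming from the $T_{U(n)}$-moment map, subject to a single linear relation (the image being a hypersimplex), together with the $n-3$ ``diagonal'' coordinates, one for each internal edge of the trivalent tree $\Gamma$, given by the corresponding bending Hamiltonian. This accounts for $(n-1) + (n-3) = \N$ coordinates. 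The edge coordinates do not depend on $\Gamma$, so $T_{\Gamma,\Gamma'}$ will act as the identity on them, and all the content lies in the transformation of the diagonal coordinates.

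Next I would analyze an elementary flip. If $\Gamma$ and $\Gamma'$ agree away from one internal edge, that edge is a diagonal of the quadrilateral $Q$ formed by the two triangles adjacent to it; the four sides $s_1, s_2, s_3, s_4$ of $Q$ are edges or diagonals common to $\Gamma$ and $\Gamma'$, hence among the shared coordinates, while the two diagonals of $Q$ carry lengths $p$ (present in $\Gamma$) and $q$ (present in $\Gamma'$). Under the identification of the diagonal coordinates with valuations of Plücker coordinates, the three-term Plücker relation on $Q$ becomes the Ptolemy relation $p\,q = s_1 s_3 + s_2 s_4$, so that $q = (s_1 s_3 + s_2 s_4)/p$ is a subtraction-free rational expression. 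Its geometric lift in the sense of Berenstein and Zelevinsky \cite{Berenstein-Zelevinsky_TPM}, namely its tropicalization, is
\[
 q = \min(s_1 + s_3,\ s_2 + s_4) - p ,
\]
and I would define $T_{\Gamma,\Gamma'}$ on an elementary flip to fix all shared coordinates and to send the coordinate $q$ to the coordinate $p$ by this formula. Since the relation is symmetric in the two diagonals, the reverse flip is given by the same rule, so this is a piecewise-linear automorphism of $\bR^\N$, and the defining tropical inequalities of $\Delta_{\Gamma'}$ are carried exactly onto those of $\Delta_\Gamma$, giving $T_{\Gamma,\Gamma'}(\Delta_{\Gamma'}) = \Delta_\Gamma$. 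The step I expect to be the crux is precisely this identification: verifying that the toric degeneration of Theorem \ref{th:toricdeg_nohara}, matched against the tropical Grassmannian of Speyer and Sturmfels \cite{Speyer-Sturmfels_TG}, really sends the flipped diagonal coordinate to the valuation of the Plücker coordinate indexing that diagonal, with the correct $\min$/$\max$ convention.

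Finally I would treat the general case by connectivity of the flip graph of triangulations of the reference polygon, which is the $1$-skeleton of the Stasheff associahedron. Choosing a path $\Gamma' = \Gamma_0, \Gamma_1, \dots, \Gamma_m = \Gamma$ of flips, I would set $T_{\Gamma,\Gamma'}$ to be the composition of the corresponding elementary maps; a composition of piecewise-linear automorphisms is again one, and the polytopes are matched step by step, so $T_{\Gamma,\Gamma'}(\Delta_{\Gamma'}) = \Delta_\Gamma$. Each elementary map is built from $\min$, $+$ and $-$ with coefficients in $\{0, \pm 1\}$ and no constant term, and its regions of linearity are cut out by integral hyperplanes; hence so is any composition. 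Therefore, when $\Delta_\Gamma$ is an integral polytope, $T_{\Gamma,\Gamma'}$ restricts to a bijection of its lattice points and is defined over $\bZ$, as asserted.
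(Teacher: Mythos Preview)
Your overall architecture---reduce to a single flip, write down an explicit piecewise-linear map for that flip, and then compose along a path in the associahedron---is exactly the paper's strategy. The problem is the formula you propose for the elementary step. The diagonal coordinates used to describe $\Delta_\Gamma$ are \emph{length} functions (bending Hamiltonians), not valuations of Pl\"ucker coordinates, and in these coordinates the flip is \emph{not} the tropical Ptolemy relation $q=\min(s_1+s_3,\,s_2+s_4)-p$. A quick test with side lengths $(s_1,s_2,s_3,s_4)=(1,2,3,4)$ already shows the failure: the triangle inequalities force $p\in[1,3]$ and $q\in[3,5]$, whereas your map sends $p\in[1,3]$ to $q=4-p\in[1,3]$. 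The correct elementary transformation, obtained by matching the ranges of the two diagonals for fixed side data, has the form
\[
u'=u-\min(u_1+u_2,\,u_3+u_4)+\min(u_1+u_4,\,u_2+u_3),
\]
i.e.\ $u'=u+(\text{difference of two minima})$, not $u'=-u+\min(\cdots)$. Its geometric lift is the ratio $y'=y\,(y_1y_4+y_2y_3)/(y_1y_2+y_3y_4)$, not the Ptolemy product. The step you flag as the crux---identifying diagonal coordinates with Pl\"ucker valuations---is precisely where the argument breaks; no such identification is available in the paper's coordinate system.

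There is a second, smaller gap in your integrality argument. Even once you have the correct formula, it is written in the length coordinates $u(a)$, and these are related to the integral moment-map coordinates $(u_{e_i},u_{d_\alpha})$ by a transformation with half-integer coefficients (see \eqref{eq:length_fn}). So ``coefficients in $\{0,\pm1\}$'' in the length variables does not automatically give integrality in the actual lattice. The paper handles this separately (Proposition~\ref{pr:piecewise-linear}) by rewriting the two $\min$ terms so that the half-integer contributions cancel, using that the four index sets partition $\{1,\dots,n\}$; you would need to supply an analogous verification.
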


If two triangulations are related
by a single Whitehead move
(see Figure \ref{fig:whitehead1}),
then the corresponding integral piecewise-linear transformation
has the form
$
 (\ldots, u, u_1, u_2, u_3, u_4, \ldots)
  \mapsto (\ldots, u', u_1, u_2, u_3, u_4, \ldots)
$
where
\begin{equation}
 \begin{split}
   u' & = u - \min(u_1+u_2, u_3+u_4) + \min(u_1+u_4, u_2+u_3) \\ 
       & = u + \min( u_1-u_2, u_2-u_1, u_3-u_4, u_4-u_3 )  \\
       & \qquad - \min( u_1-u_4, u_4-u_1, u_2-u_3, u_3-u_2 ).
 \end{split} \label{eq:PL_transf}
\end{equation}
In general,
any two triangulations are related by
a sequence of Whitehead moves,
and the corresponding integral
piecewise-linear transformation
is an iteration
of the transformation above.

The {\em potential function} is a Floer-theoretic invariant
of Lagrangian submanifolds
introduced by Fukaya, Oh, Ohta and Ono
\cite{Fukaya-Oh-Ohta-Ono}.
It gives the ``superpotential''
of the mirror Landau-Ginzburg model
for toric manifolds
\cite{Cho-Oh, FOOO_toric_I, FOOO_LFTMS}.
In \cite{Nishinou-Nohara-Ueda_TDGCSPF},
the potential function
for a Lagrangian torus fiber
of the Gelfand-Cetlin system
\cite{Guillemin-Sternberg_GCS}
is computed
by using a toric degeneration.
An essential point in the argument is the fact
that the central fiber is a toric Fano variety
admitting a small resolution,
which holds also in the present situation:

\begin{thm} \label{th:X_Gamma}
For any triangulation $\Gamma$ of the reference polygon,
the toric variety $X_\Gamma$ is a Fano variety
admitting a small resolution.
\end{thm}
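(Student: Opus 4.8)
The plan is to read off the fan $\Sigma_\Gamma$ of $X_\Gamma$ directly from the combinatorics of the tree $\Gamma$ and then establish the Fano property and the existence of a small resolution separately. First I would describe $X_\Gamma$ as the projective toric variety attached to the moment polytope $\DeltaG \subset \bR^\N$ from Theorem \ref{th:toricdeg_nohara}, so that $\Sigma_\Gamma$ is the normal fan of $\DeltaG$ and the Pl\"ucker bundle restricts to the polarization defining $\DeltaG$. The primitive ray generators of $\Sigma_\Gamma$ then correspond to the facets of $\DeltaG$, which are indexed by the inequalities cutting out $\DeltaG$ (the triangle inequalities at the internal vertices of $\Gamma$ together with the boundary conditions). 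The caterpillar case, where $\DeltaG$ is the Gelfand--Cetlin polytope, was treated in \cite{Nishinou-Nohara-Ueda_TDGCSPF}, and the goal is to show that the same qualitative picture persists for an arbitrary trivalent tree.

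For the Fano assertion I would show that $\DeltaG$ is reflexive, equivalently that $-K_{X_\Gamma}=\sum_\rho D_\rho$ is an ample Cartier divisor whose associated polytope has normal fan $\Sigma_\Gamma$. Rather than verify reflexivity from the defining inequalities directly, the cleanest route uses the flat toric degeneration of $\Gr(2,n)$ into $X_\Gamma$: the Fano index of $\Gr(2,n)$ equals $n$, so $-K_{\Gr(2,n)}=nH$ for the Pl\"ucker hyperplane class $H$, and I would check that this class deforms flatly to $-K_{X_\Gamma}=nH|_{X_\Gamma}$, the restriction of an ample class and hence ample. Concretely this reduces to verifying that the sum of the torus-invariant divisors $\sum_\rho D_\rho$ is linearly equivalent to $n$ times the hyperplane section, a bookkeeping computation on the rays of $\Sigma_\Gamma$ that is uniform in $\Gamma$ because all the defining relations are conifold binomials of the single homogeneous form $p_{ac}p_{bd}=p_{ad}p_{bc}$. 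The same computation shows that $X_\Gamma$ is Gorenstein.

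For the small resolution I would analyse the singular cones of $\Sigma_\Gamma$. The binomial ideal of $X_\Gamma$ is generated by the relations $p_{ac}p_{bd}=p_{ad}p_{bc}$ attached to the quadruples of leaves split by an internal edge of $\Gamma$, each a single conifold equation of the form $xy=zw$. Consequently every non-simplicial maximal cone of $\Sigma_\Gamma$ is, up to product with a smooth cone, the cone over a unit square, and the singular locus of $X_\Gamma$ is a union of toric strata locally modelled on the conifold times a torus. Such a cone is triangulated into two unimodular cones by inserting a diagonal of the square \emph{without introducing any new ray}. Performing this compatibly over all such cones yields a simplicial refinement $\Sigma'_\Gamma$ of $\Sigma_\Gamma$ with the same set of rays, hence a toric morphism $X_{\Sigma'_\Gamma}\to X_\Gamma$ whose exceptional locus has codimension at least two; this is the desired small resolution, and it is smooth because each refined cone is unimodular.

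The main obstacle I anticipate is the combinatorial control of $\Sigma_\Gamma$ for a general, non-caterpillar tree: I must show that \emph{every} maximal cone is either smooth or splits as a product of a smooth cone with cones over unit squares, so that no singularity worse than a product of conifolds occurs and no new ray is ever forced. For the caterpillar this is the content of \cite{Nishinou-Nohara-Ueda_TDGCSPF}, and in general I would reduce to that case by propagating the local structure across Whitehead moves, using the explicit piecewise-linear transformation of Theorem \ref{th:piecewise-linear} to match the facet structure of $\DeltaG$ with that of $\Delta_{\Gamma'}$ one mutation at a time.
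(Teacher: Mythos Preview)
Your reduction to the caterpillar via Whitehead moves cannot work as stated. The map $T_{\Gamma,\Gamma'}$ of Theorem~\ref{th:piecewise-linear} is only piecewise-linear, so it does not carry the normal fan $\Sigma_{\Gamma'}$ to $\Sigma_\Gamma$; indeed $X_\Gamma$ and $X_{\Gamma'}$ are in general non-isomorphic toric varieties with different face lattices, so there is no way to transport a cone-by-cone analysis from one to the other. You therefore have no argument for the key claim that every maximal cone of $\Sigma_\Gamma$ is at worst a smooth cone times a cone over a unit square. (The claim is in fact true and follows from the singularity analysis in Proposition~\ref{pr:Sing_XG}, but that has to be established directly for each $\Gamma$, not imported from the caterpillar.) There is also a circularity in your Fano argument: deducing that $-K_{X_\Gamma}$ is Cartier from the degeneration presupposes that $X_\Gamma$ is Gorenstein, which for a toric variety is exactly the statement that $\sum_\rho D_\rho$ is Cartier---essentially the reflexivity computation you were hoping to bypass.

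The paper's approach is different and entirely direct, treating all $\Gamma$ uniformly without reference to the caterpillar. For the Fano property it verifies reflexivity of $\Delta_\Gamma$ from the triangle inequalities: after the translation $u'_{e_i}=u_{e_i}-2$, $u'_{d_\alpha}=u_{d_\alpha}+1-|I_\alpha|$, each inequality becomes $\langle v,u'\rangle\ge -1$ for an explicit integral $v$ (a short case analysis according to how many sides of $P$ lie in the given triangle), and a brief induction over the triangles shows the origin is the unique interior lattice point. For the small resolution the paper does not refine the fan at all but instead uses the GIT description of Corollary~\ref{cr:XG}: $X_\Gamma$ is the quotient of $\prod_m V_{P_m^{(n-3)}}$ by $(\bC^*)^{n-2}$ at level $(1,0,\ldots,0)$, and varying the level to $(1,1,\ldots,1)$ produces a smooth quotient $Y_\Gamma$ because the torus action is free on that level set. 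The induced variation-of-GIT morphism $Y_\Gamma\to X_\Gamma$ is then checked to be an isomorphism away from the codimension-three conifold strata of Proposition~\ref{pr:Sing_XG}, hence small.
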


As a result,
we obtain an explicit description of the potential function
for a Lagrangian torus fiber
of the integrable system $\Psi_\Gamma$
as a Laurent polynomial
over the Novikov ring:

\begin{thm} \label{th:potential_function}
The potential function is given by
\begin{equation} \label{eq:potential_function}
  \po_\Gamma = \sum_{\text{triangles}} 
        \left( 
          \frac{y(b) y(c)}{y(a)}
        + \frac{y(a) y(c)}{y(b)}
        + \frac{y(a) y(b)}{y(c)}
        \right),
\end{equation}
where $y(a)$ is a Laurent monomial
defined in Section $\ref{sc:potential_function}$
associated with an edge $a$ of a triangle,
and the sum is taken over all triangles 
in the triangulation $\Gamma$.
\end{thm}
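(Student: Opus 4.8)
The plan is to follow the strategy of \cite{Nishinou-Nohara-Ueda_TDGCSPF}, where the potential function of the Gelfand--Cetlin system was computed by passing to its toric degeneration. By Theorem \ref{th:toricdeg_nohara} the system $\Psi_\Gamma$ degenerates to the toric moment map on $X_\Gamma$, and by Theorem \ref{th:X_Gamma} the central fiber $X_\Gamma$ is a Fano toric variety admitting a small resolution $\widetilde{X}_\Gamma$. The first step is to invoke the main result of \cite{Nishinou-Nohara-Ueda_TDGCSPF}: under exactly these hypotheses, the potential function of a Lagrangian torus fiber $L(u)$ of $\Psi_\Gamma$ coincides with the potential function of the corresponding torus fiber of $X_\Gamma$. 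The point is that the open Gromov--Witten invariants counting holomorphic disks are preserved along the degeneration family, so that the Floer-theoretic computation on $\Gr(2,n)$ is reduced to a purely toric one on $X_\Gamma$.

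The second step is to carry out the computation on the toric side. Since the resolution $\widetilde{X}_\Gamma \to X_\Gamma$ is small, its exceptional locus has codimension at least two and introduces no new torus-invariant divisors; equivalently, the fan of $\widetilde{X}_\Gamma$ is a subdivision of that of $X_\Gamma$ adding no new rays, so the moment polytope of $\widetilde{X}_\Gamma$ is again $\Delta_\Gamma$. Thus $\widetilde{X}_\Gamma$ is a smooth toric Fano variety, and by the theorem of Cho--Oh and Fukaya--Oh--Ohta--Ono \cite{Cho-Oh, FOOO_toric_I} its potential function is
\[
  \mathfrak{PO} = \sum_i T^{\ell_i(u)}\, y^{v_i},
\]
with one term for each facet of $\Delta_\Gamma$, where $v_i$ is the primitive inward normal of the $i$-th facet and $\ell_i(u) = \langle u, v_i\rangle + \lambda_i$ is the affine distance from $u$ to that facet. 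In particular, smallness of the resolution guarantees that no extra facets, hence no spurious terms, are produced.

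The third step, which is the combinatorial heart of the argument, is to identify the facets of $\Delta_\Gamma$ and rewrite this sum in the form \eqref{eq:potential_function}. I would show that the facets of $\Delta_\Gamma$ are precisely the triangle inequalities attached to the triangles of $\Gamma$: each triangle with edges $a, b, c$ contributes the three facets $\ell_b + \ell_c - \ell_a \ge 0$, $\ell_a + \ell_c - \ell_b \ge 0$, and $\ell_a + \ell_b - \ell_c \ge 0$, where $\ell_a, \ell_b, \ell_c$ are the edge-length coordinates. Packaging the energy $T^{\ell_a}$ together with the appropriate torus weight into the Laurent monomial $y(a)$ of Section \ref{sc:potential_function}, the facet term for $\ell_b + \ell_c - \ell_a \ge 0$ becomes $y(b)y(c)/y(a)$, and similarly for the other two; summing over all triangles then yields \eqref{eq:potential_function}.

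The main obstacle I anticipate lies in the first step, namely the matching of holomorphic disk counts across the degeneration: one must verify that no disk classes of Maslov index two are lost or gained as the complex structure degenerates, and that the singular strata of $X_\Gamma$ (smoothed only by the small resolution) contribute no additional disks. This is exactly the difficulty that the Fano-plus-small-resolution hypothesis in \cite{Nishinou-Nohara-Ueda_TDGCSPF} is designed to control, so with Theorems \ref{th:toricdeg_nohara} and \ref{th:X_Gamma} in hand the argument applies verbatim; the remaining, more bookkeeping-intensive task is the explicit facet-by-facet identification in the third step.
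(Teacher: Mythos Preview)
Your proposal is correct and follows essentially the same route as the paper: invoke the black-box result of \cite{Nishinou-Nohara-Ueda_TDGCSPF,Nishinou-Nohara-Ueda_PFTD} using Theorems \ref{th:toricdeg_nohara} and \ref{th:X_Gamma} to obtain the facet-sum formula (the paper's Theorem \ref{th:potential}), then rewrite it via the triangle-inequality description of $\Delta_\Gamma$ and the change of variables $y_a=e^{x_a}T^{u_a}$ into the form \eqref{eq:potential_function}. One small inaccuracy: you assert that the small resolution $\widetilde{X}_\Gamma$ is itself Fano, but this is neither proved nor needed---the relevant hypotheses in \cite{Nishinou-Nohara-Ueda_TDGCSPF} are that $X_\Gamma$ is toric Fano and that it admits a small (hence crepant) resolution, which is exactly what Theorem \ref{th:X_Gamma} provides.
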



Recall that the {\em geometric lift}
\cite{Berenstein-Zelevinsky_TPM}
of a piecewise-linear function is given
by replacing summation,
subtraction, and the minimum by
multiplication, division, and summation.


\begin{thm} \label{th:geometric_lift}
For any pair $(\Gamma, \Gamma')$
of triangulations of the reference polygon,
the potential functions $\po_\Gamma$ and $\po_{\Gamma'}$
are related by a subtraction-free rational change of variables
obtained as the geometric lift
of the piecewise-linear transformation $T_{\Gamma, \Gamma'}$
in Theorem $\ref{th:piecewise-linear}$.
\end{thm}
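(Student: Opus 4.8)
\section*{Proof proposal for Theorem \ref{th:geometric_lift}}

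The plan is to reduce to a single Whitehead move and then check a purely local identity on the two triangles that the move affects. As recalled after Theorem \ref{th:piecewise-linear}, any two triangulations of the reference polygon are connected by a sequence of Whitehead moves, and the associated $T_{\Gamma,\Gamma'}$ is the corresponding iteration of the elementary transformations \eqref{eq:PL_transf}. Since passing to the geometric lift takes composites of subtraction-free maps to composites, and since substituting one potential function into another is likewise compatible with composition, it suffices to treat the case in which $\Gamma$ and $\Gamma'$ differ by flipping a single diagonal of one quadrilateral.

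First I would write the geometric lift of \eqref{eq:PL_transf} explicitly. Under the Berenstein--Zelevinsky dictionary (replace $+$, $-$, $\min$ by $\times$, $/$, $+$), the formula $u' = u - \min(u_1+u_2, u_3+u_4) + \min(u_1+u_4, u_2+u_3)$ lifts to
\[
 Y' = Y \cdot \frac{Y_1 Y_4 + Y_2 Y_3}{Y_1 Y_2 + Y_3 Y_4},
\]
with the remaining coordinates unchanged. Recalling the definition of the edge monomials $y(a)$ from Section \ref{sc:potential_function}, I would verify that this rational map is exactly the change of variables $y(u) \mapsto y(u')$ attached to the flipped diagonal, the edges $y(u_1),\dots,y(u_4)$ of the surrounding quadrilateral and all other edges being fixed. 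The point is that $y(a)$ is normalized so that its Novikov exponent is the affine function defining the corresponding facet of $\DeltaG$; the subtraction-free structure of the geometric lift then simultaneously accounts for these $T$-powers and for the torus variables, so that the lift of $T_{\Gamma,\Gamma'}$ on moment coordinates literally becomes the substitution $y(u)\mapsto y(u')$.

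Next I would localize the comparison. Because $\po_\Gamma$ in \eqref{eq:potential_function} is a sum over triangles, and the move replaces the two $\Gamma$-triangles $(u_1,u_2,u)$ and $(u,u_3,u_4)$ by the two $\Gamma'$-triangles $(u_1,u',u_4)$ and $(u_2,u_3,u')$ (the pairing being dictated by the two minima in \eqref{eq:PL_transf}), the potentials $\po_\Gamma$ and $\po_{\Gamma'}$ agree outside these six monomials. It therefore suffices to prove that, after the substitution above, the combined three-term contribution of the two $\Gamma$-triangles equals that of the two $\Gamma'$-triangles. Writing $y(u)=w$, $y(u')=w'$ and $y(u_i)=t_i$, the $\Gamma$-side equals $\frac{t_1 t_2 + t_3 t_4}{w} + w\left(\frac{t_1}{t_2}+\frac{t_2}{t_1}+\frac{t_3}{t_4}+\frac{t_4}{t_3}\right)$ and the $\Gamma'$-side equals $\frac{t_1 t_4 + t_2 t_3}{w'} + w'\left(\frac{t_1}{t_4}+\frac{t_4}{t_1}+\frac{t_2}{t_3}+\frac{t_3}{t_2}\right)$. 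Substituting $w' = w\,\frac{t_1 t_4 + t_2 t_3}{t_1 t_2 + t_3 t_4}$, the two first terms coincide at once, while the two second terms both reduce to $\frac{(t_1 t_4 + t_2 t_3)(t_1 t_3 + t_2 t_4)}{t_1 t_2 t_3 t_4}\,w$ by the factorizations $(t_1^2+t_2^2)t_3 t_4 + (t_3^2+t_4^2)t_1 t_2 = (t_1 t_4 + t_2 t_3)(t_1 t_3 + t_2 t_4)$ and $(t_1^2+t_4^2)t_2 t_3 + (t_2^2+t_3^2)t_1 t_4 = (t_1 t_2 + t_3 t_4)(t_1 t_3 + t_2 t_4)$.

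I expect the main obstacle to lie in the second paragraph rather than in the final algebra: one must pin down the normalization of the edge monomials $y(a)$ --- in particular the Novikov exponents arising from the facet-defining functions of $\DeltaG$ --- carefully enough that the geometric lift of $T_{\Gamma,\Gamma'}$ on the moment polytope and the substitution $y(u)\mapsto y(u')$ agree on the nose, with the correct power of $T$ on each edge. Once this bookkeeping is in place, the matching of $\po_\Gamma$ and $\po_{\Gamma'}$ is the formal, subtraction-free computation above, and the general case follows by iterating over a sequence of Whitehead moves.
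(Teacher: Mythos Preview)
Your proposal is correct and follows essentially the same route as the paper's own argument (Corollary~\ref{cr:geometric_lift}): reduce to a single Whitehead move, observe that the two potentials agree outside the two affected triangles, and verify the local identity under the substitution \eqref{eq:geom_lift}. The only minor difference is that the paper invokes the alternative form \eqref{eq:geom_lift2} of the geometric lift to make the match of the $w$- and $w'$-linear terms immediate, whereas you carry out the equivalent factorizations $(t_1^2+t_2^2)t_3 t_4 + (t_3^2+t_4^2)t_1 t_2 = (t_1 t_4 + t_2 t_3)(t_1 t_3 + t_2 t_4)$ by hand; your worry about the normalization of the $y(a)$ is not really an issue, since the paper works directly in the $y(a)$-variables and defines the geometric lift at that level.
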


If $\Gamma$ and $\Gamma'$ are related
by a single Whitehead move,
then the corresponding change of variables is given by
\begin{equation}
 \begin{split}
  y' = y \cdot
   \frac{y_1 y_4 + y_2 y_3}
        {y_1 y_2 + y_3 y_4}
      = y \cdot
   \frac{\dfrac{y_1}{y_2} + \dfrac{y_2}{y_1} 
     + \dfrac{y_3}{y_4} + \dfrac{y_4}{y_3}}
   {\dfrac{y_1}{y_4} + \dfrac{y_4}{y_1}
     + \dfrac{y_2}{y_3} + \dfrac{y_3}{y_2}},
 \end{split}
   \label{eq:geometric_lift}
\end{equation}
which indeed is a geometric lift
of \eqref{eq:PL_transf}.

On the other hand,
the central fiber of the toric degeneration of a polygon space
may neither be Fano nor admit a small resolution,
and one can not apply the argument
of \cite{Nishinou-Nohara-Ueda_TDGCSPF}
directly to this case.
For example,
the moduli space of pentagons is isomorphic
to  $\bC \bP^1 \times \bC \bP^1$ for a suitable choice of $\br$,
and it degenerates to the Hirzebruch surface of degree 2,
which is not Fano.
In this case,
we need to consider contributions of sphere bubbles
to the potential function.
This case is studied in detail by Auroux
\cite{Auroux_MSTD, Auroux_SLFWCMS} 
and Fukaya, Oh, Ohta and Ono \cite{FOOO_toric_deg}.

This paper is organized as follows:
In Section \ref{sc:polygon_space},
we recall basic facts on polygon spaces.
We study the bending systems
in Section \ref{sc:bending_system}.
Theorem \ref{th:nohara_system} is proved
in Section \ref{sc:nohara_system}, and
Theorem \ref{th:piecewise-linear} is a corollary
to Proposition \ref{pr:piecewise-linear}.
Section \ref{sc:deg_in_stages}
is devoted to a construction of toric degenerations of $\Gr(2,n)$ in stages, 
which are used to construct a deformation of completely integrable systems.
Theorem \ref{th:toricdeg_nohara} is proved
in Section \ref{sc:deg_CIS}, and
Theorem \ref{th:X_Gamma} is proved
in Section \ref{sc:X_Gamma}.
In Section \ref{sc:potential_function},
we compute the potential functions
for the completely integrable 
systems on $\Gr(2,n)$
and prove Theorems
\ref{th:potential_function} and \ref{th:geometric_lift}.

{\bf Acknowledgment}:
We thank Takeo Nishinou
for collaboration
at an early stage of this research;
this paper is originally conceived
as a joint project with him.
We are also grateful to Hiroshi Konno for valuable discussion
which is reflected in the proof of Theorem \ref{th:toricdeg_nohara},
and to Megumi Harada and Laura Escobar for pointing an error 
in the proof of Lemma \ref{lem:ham-1st_stage}.
We thank the anonymous referee for carefully reading the manuscript
and suggesting several improvements.
This research is supported by
Grant-in-Aid for Young Scientists
(No.19740025, No.23740055, and No.20740037).


\section{Polygon spaces and Grassmannians}
 \label{sc:polygon_space}

In this section,
we fix notation and recall the relation
between polygon spaces and Grassmannians.

We identify the Lie algebra $\fraku(m)$
(resp. $\fraks \fraku(m)$)
of the Lie algebra of the unitary group $U(m)$
(resp. the special unitary group $SU(m)$)
with the space $\sqrt{-1} \mathfrak{u}(m)$ of Hermitian matrices 
(resp. the space $\sqrt{-1} \mathfrak{su}(m)$
of traceless Hermitian matrices).
The dual space $\fraku(m)^*$
(resp. $\fraks \fraku(m)^*$)
is identified with $\sqrt{-1} \fraku(m)$
(resp. $\sqrt{-1} \fraks \fraku(m)$)
by the invariant inner product
$\langle x, y \rangle = \tr(xy)$.
The moment map of the natural $U(m)$-action on $\mathbb{C}^m$
equipped with the standard symplectic structure is given by
\begin{equation}
  \mathbb{C}^m \longrightarrow \sqrt{-1} \mathfrak{u}(m),
  \quad
  z = \begin{pmatrix} z_1 \\ \vdots \\ z_m \end{pmatrix}
  \longmapsto
  \frac 12 zz^* = \frac 12 (z_i \overline{z}_j)_{i,j}
  \label{eq:unitary_mm}
\end{equation}
Recall from Introduction
that the polygon space $\pol$ is defined by
\[
  \pol
  = \Bigl. 
    \Bigl\{ \bld{x} = (x_1, \dots, x_n) \in \prod_{i=1}^n S^2(r_i) \, 
     \Bigm| \,
        x_1 + \dots + x_n = 0 \, \Bigr\}
    \Bigr/ SO(3),
\]
where $n \ge 3$ and
$
 r_i < r_1 + \dots + r_{i-1} + r_{i+1} + \dots + r_n
$
for
$
 i = 1, \ldots, n.
$
To describe a natural symplectic structure on $\pol$,
we identify $\Euc$ with $\sqrt{-1} \su$ by
\begin{equation} \label{eq:h}
\begin{array}{cccc}
 h : & \bR^3 & \simto & \sqrt{-1} \fraksu(2) \\
  & \vin & & \vin \\
 & \begin{pmatrix} x^1 \\ x^2 \\ x^3 \end{pmatrix}
 & \mapsto & 
  \begin{pmatrix}
    x^3 & x^1 - \sqrt{-1}x^2 \\
    x^1 + \sqrt{-1} x^2 & -x^3
  \end{pmatrix}.
\end{array}
\end{equation}
Then the $SO(3)$-action on $\Euc$ 
is induced from the (co)adjoint action of $SU(2)$, and 
the sphere $S^2(r_i)$ is identified with a (co)adjoint orbit
$\mathcal{O}_{r_i}$ of
$\mathrm{diag} (r_i,  -r_i)$,
which has the canonical
Kostant-Kirillov symplectic form
$\omega_{\mathcal{O}_{r_i}}$.
We equip $\mathcal{O}_{r_1} \times \dots \times 
\mathcal{O}_{r_n}$ with the symplectic form
$\sum_i \mathrm{pr}_i^* \omega_{\mathcal{O}_{r_i}}$, where 
$\mathrm{pr}_i : \mathcal{O}_{r_1} \times \dots \times 
\mathcal{O}_{r_n} \to \mathcal{O}_{r_i}$
is the $i$-th projection.
Then the diagonal $SU(2)$-action is Hamiltonian and its
moment map is given by
\[
  \mu : \mathcal{O}_{r_1} \times \dots \times \mathcal{O}_{r_n}
  \longrightarrow \sqrt{-1} \su^*, \quad
  (x_1, \dots, x_n) \longmapsto x_1 + \dots + x_n.
\]
Hence one has the following:
\begin{prop} \label{prop:symp_red'}
  $\pol$ is a symplectic reduction of
  $\mathcal{O}_{r_1} \times \dots \times 
\mathcal{O}_{r_n}$ by the diagonal $SU(2)$-action$:$
\[
  \pol = \Bigl. \Bigl. 
         \prod_{i=1}^n \mathcal{O}_{r_i} 
         \Bigr/\!\!\! \Bigr/_{\!\!\! 0} SU(2)
       = \mu^{-1}(0)/SU(2).
\]
\end{prop}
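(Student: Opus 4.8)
The plan is to assemble the definitions and check that the identification $h$ of \eqref{eq:h} carries the defining data of $\pol$ to the reduction data on $\prod_i \mathcal{O}_{r_i}$, so that the assertion becomes an instance of the Marsden--Weinstein construction. First I would verify that $h$ is equivariant for the $SO(3)$-action on $\Euc$ and the coadjoint $SU(2)$-action on $\sqrt{-1}\su$ via the double cover $SU(2) \to SO(3)$: for $g \in SU(2)$ with image $\bar g \in SO(3)$ one has $h(\bar g \cdot v) = \Ad_g^* (h(v)) = g\, h(v)\, g^{-1}$. This is the standard presentation of the spin double cover, and follows from the fact that $\Ad_g$ preserves $\tr(h(v)^2) = 2|v|^2$ and acts by rotations. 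In particular, the center $\{\pm 1\} = \ker(SU(2) \to SO(3))$ acts trivially, so the $SU(2)$-action on each orbit factors through $SO(3)$.

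Next I would identify the image of each sphere. Since $h(v)$ is traceless Hermitian with $\det h(v) = -|v|^2$, its eigenvalues are $\pm|v|$, so $h$ restricts to an $SO(3)$-equivariant diffeomorphism $S^2(r_i) \simto \mathcal{O}_{r_i}$ of the radius-$r_i$ sphere onto the coadjoint orbit of $\mathrm{diag}(r_i, -r_i)$. Because the symplectic structure on $\pol$ is by construction the one pulled back from the Kostant-Kirillov form $\omega_{\mathcal{O}_{r_i}}$, no independent comparison of symplectic forms is required; the product form $\sum_i \mathrm{pr}_i^* \omega_{\mathcal{O}_{r_i}}$ corresponds precisely to the chosen form on $\prod_i S^2(r_i)$.

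I would then record the moment map. The moment map for the coadjoint action of $SU(2)$ on a single orbit with its Kostant-Kirillov form is the inclusion $\mathcal{O}_{r_i} \hookrightarrow \sqrt{-1}\su^*$, and for the diagonal action on a product the moment map is the sum of the component moment maps; this yields $\mu(x_1, \dots, x_n) = x_1 + \dots + x_n$. Hence $\mu^{-1}(0) = \{(x_1,\dots,x_n) \in \prod_i \mathcal{O}_{r_i} : \sum_i x_i = 0\}$ corresponds under $h$ exactly to the closed-polygon constraint $\sum_i x_i = 0$ on $\prod_i S^2(r_i)$.

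Finally I would combine these. Since the $SU(2)$-action factors through $SO(3)$, the quotient $\mu^{-1}(0)/SU(2)$ coincides with $\mu^{-1}(0)/SO(3)$, which under $h$ is precisely the definition of $\pol$; the identification as symplectic reductions is then immediate from Marsden--Weinstein once the moment map and the symplectic form are matched as above. The only point demanding genuine care is the equivariance of $h$ together with the triviality of the central action, which is what licenses replacing the $SO(3)$-quotient by the $SU(2)$-reduction; the remainder is bookkeeping of standard facts about coadjoint orbits.
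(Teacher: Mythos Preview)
Your proof is correct and follows exactly the approach of the paper: the paper sets up the identification $h$, the coadjoint-orbit description of $S^2(r_i)$, and the moment map $\mu(x_1,\dots,x_n)=\sum_i x_i$ in the paragraphs preceding the proposition, and then simply writes ``Hence one has the following'' without further argument. You have merely spelled out in detail the equivariance and Marsden--Weinstein bookkeeping that the paper leaves implicit.
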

Let $\omega_{\pol}$ denote the induced symplectic form on $\pol$.
Identifying the symplectic reduction with a GIT quotient 
$(\prod_{i=1}^{n} \mathbb{CP}^1) \GIT  SL(2, \bC)$, 
we obtain a compatible complex structure on $(\pol, \omega_{\pol})$.

Next we recall a relation to the Grassmannian $\Gr(2,n)$.
Let $| \bld{r} | = \sum_i r_i$.
We consider the natural right $U(2)$-action
on the vector space $\bC^{n \times 2}$ of $n \times 2$ matrices.
From \eqref{eq:unitary_mm},
its moment map is given by
\begin{equation} \label{eq:U(2)-moment-map}
 \mu_{U(2)} : \mathbb{C}^{n \times 2} \to \sqrt{-1} \fraku (2), \quad
  \begin{pmatrix}
    z_1 & w_1 \\
    \vdots & \vdots \\
    z_n & w_n
  \end{pmatrix}
  \longmapsto \frac 12
  \sum_{i=1}^n \begin{pmatrix}
                  |z_i|^2 & \overline{z}_i w_i\\
                  z_i \overline{w}_i & |w_i|^2
               \end{pmatrix}.
\end{equation}
Then
\[
  \begin{pmatrix}
    z_1 & w_1 \\
    \vdots & \vdots \\
    z_n & w_n
  \end{pmatrix}
  \in \mu_{U(2)}^{-1} 
    \begin{pmatrix}
      |\bld r| & 0 \\
      0 & |\bld r| 
    \end{pmatrix}
\]
if and only if it satisfies
\begin{equation}
  \sum_i |z_i|^2 = \sum_i |w_i|^2 =  2 | \boldsymbol{r} |,
  \quad
  \sum_i z_i \overline{w}_i = 0.
\label{eq:grassmann}
\end{equation}
It follows that $\Gr(2,n)$ is a symplectic reduction of 
$\mathbb{C}^{n \times 2}$
by the $U(2)$-action:
\[
  \Gr(2,n) = \mathbb{C}^{n \times 2} \GIT_{|\bld{r}|} U(2)
  = \left. \mu_{U(2)}^{-1} 
    \begin{pmatrix}
      |\bld r| & 0 \\
      0 & |\bld r| 
    \end{pmatrix}
    \right/ U(2).
\]
We consider the moment map 
\begin{equation} \label{eq:SU(2)-moment-map}
  \mu_{SU(2)} : \bC^2 \longrightarrow \sqrt{-1} \su,
  \quad
  (z, w) \longmapsto \frac 14
     \begin{pmatrix}
       |z|^2 - |w|^2 & 2 \overline{z} w \\
       2 z \overline{w} & |w|^2 - |z|^2
     \end{pmatrix}
\end{equation}
of the standard $SU(2)$-action on $\mathbb{C}^2$.
The condition (\ref{eq:grassmann}) implies that 
\[
  \bigl( \mu_{SU(2)} (z_1, w_1), \dots, \mu_{SU(2)} (z_n, w_n) \bigr)
\]
gives a closed $n$-gon in $\sqrt{-1} \su \cong \mathbb{R}^3$ 
(i.e., it satisfies $\sum_i \mu_{SU(2)} (z_i, w_i) = 0$).
Since $\mu_{SU(2)} : \mathbb{C}^2 \to \sqrt{-1} \su$ is a quotient map by the 
diagonal $S^1$-action on $\mathbb{C}^2$, 
the quotient 
$T_{U(n)} \backslash \Gr(2,n)$ can be regarded as a moduli space
of polygons with fixed perimeter $| \bld{r} |$,
where $T_{U(n)} \subset U(n)$ is the maximal torus consisting of 
diagonal matrices.
Note that the $U(n)$-action on $\Gr(2,n)$ has the stabilizer 
of positive dimension.
The moment map $\mu_{T_{U(n)}} : \Gr(2,n) \to \mathbb{R}^n$ 
of the $T_{U(n)}$-action on $\Gr(2,n)$ is given by
\begin{equation} \label{eq:T_moment_map}
 \mu_{T_{U(n)}} : 
  \begin{bmatrix}
    z_1 & w_1 \\
    \vdots & \vdots \\
    z_n & w_n
  \end{bmatrix}
  \longmapsto
  \left( \frac{|z_1|^2 + |w_1|^2}2, \dots, 
    \frac{|z_n|^2 + |w_n|^2}2 \right).
\end{equation}
Since
$
 (|z_i|^2 + |w_i|^2)/2
  = 2 \| h^{-1} \circ \mu_{SU(2)} (z_i, w_i)\|_{\bR^3},
$
we have
\begin{prop}[Hausmann and Knutson
{\cite[(3.9)]{Hausmann-Knutson_PSG}}]
 \label{prop:symp_red}
The polygon space $\pol$ is isomorphic to
the symplectic reduction of $\Gr(2,n)$ by the $T_{U(n)}$-action$:$
\begin{equation}
  \pol \cong T_{U(n)} \GITl_{2 \bld r} \Gr(2,n)
  = T_{U(n)} \backslash \mu_T^{-1}(2 \boldsymbol{r}),
  \label{eq:symp_red}
\end{equation}
where the symplectic structure on $\Gr(2,n)$ is the Kostant-Kirillov
form on the (co)adjoint orbit of 
$\mathrm{diag}(|\bld{r}|, |\bld{r}|, 0, \dots, 0)$ in
$\sqrt{-1} \mathfrak{u}(n) (\cong \mathfrak{u}(n)^*)$.
\end{prop}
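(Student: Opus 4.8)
The plan is to prove the isomorphism by \emph{reduction in stages}, exploiting that the left $T_{U(n)}$-action and the right $U(2)$-action on $\bC^{n\times 2}$ commute and are both Hamiltonian. Since $\Gr(2,n) = \bC^{n\times 2}\GIT_{|\bld r| I} U(2)$ as in \eqref{eq:grassmann}, reducing this quotient further by $T_{U(n)}$ at level $2\bld r$ should coincide with reducing $\bC^{n\times 2}$ by the commuting pair $T_{U(n)}\times U(2)$ at the combined level, and I may therefore reverse the order of reduction:
\[
  T_{U(n)} \GITl_{2\bld r} \Gr(2,n)
  \;=\; T_{U(n)} \GITl_{2\bld r} \bigl( \bC^{n\times 2} \GIT_{|\bld r| I} U(2) \bigr)
  \;\cong\; \bigl( T_{U(n)} \GITl_{2\bld r} \bC^{n\times 2} \bigr) \GIT_{|\bld r| I} U(2).
\]
The strategy is then to carry out the inner $T_{U(n)}$-reduction explicitly, recognize the result as the product of coadjoint orbits, and finish by invoking Proposition \ref{prop:symp_red'} for the residual reduction.

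First I would compute the $T_{U(n)}$-reduction factor by factor. Writing $\bC^{n\times 2}=\prod_{i=1}^n \bC^2$ by rows, the torus $T_{U(n)}=(S^1)^n$ acts componentwise, so
\[
  T_{U(n)} \GITl_{2\bld r} \bC^{n\times 2}
  \;\cong\; \prod_{i=1}^n \bigl( S^1 \GITl_{2 r_i} \bC^2 \bigr)
  \;\cong\; \prod_{i=1}^n \mathcal{O}_{r_i}.
\]
Here each factor is identified using the fact, recalled above, that $\mu_{SU(2)}\colon\bC^2\to\sqrt{-1}\su$ is precisely the quotient map for the diagonal $S^1$-action; the level $2r_i$ corresponds via the norm identity $(|z_i|^2+|w_i|^2)/2 = 2\|h^{-1}\circ\mu_{SU(2)}(z_i,w_i)\|_{\Euc}$ to the sphere of radius $r_i$, so the reduced space is $S^2(r_i)\cong\mathcal{O}_{r_i}$ with its Kostant--Kirillov form. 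The residual right $U(2)$-action descends to an action on $\prod_i\mathcal{O}_{r_i}$ by simultaneous rotations.

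Next I would perform the residual reduction by $U(2)$. The traceless part of the moment map \eqref{eq:U(2)-moment-map}, restricted to $\prod_i\mathcal{O}_{r_i}$ and transported through $h$, is exactly the sum map $(x_1,\dots,x_n)\mapsto x_1+\dots+x_n$ of Proposition \ref{prop:symp_red'}, while the prescribed level $|\bld r| I$ has vanishing traceless part. Hence the last reduction is taken at $0$ and recovers
\[
  \Bigl(\prod_{i=1}^n\mathcal{O}_{r_i}\Bigr)\bigGIT_{0} SU(2)
  \;=\; \mu^{-1}(0)/SU(2) \;=\; \pol,
\]
which is the desired isomorphism. The normalization of the symplectic form claimed in the statement follows because $\bC^{n\times 2}\GIT_{|\bld r| I} U(2)$ realizes $\Gr(2,n)$ as the coadjoint orbit of $\mathrm{diag}(|\bld r|,|\bld r|,0,\dots,0)$ in $\sqrt{-1}\,\fraku(n)^*$.

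The main obstacle will be the overlap between the diagonal circle $S^1\subset T_{U(n)}$ and the scalar center $U(1)\subset U(2)$: both act by $(z_i,w_i)\mapsto(e^{\sqrt{-1}\theta}z_i,e^{\sqrt{-1}\theta}w_i)$, so $T_{U(n)}\times U(2)$ does not act effectively and naive reduction in stages does not apply verbatim. The point to verify is that the two prescribed levels agree on this common circle --- both force $\sum_i(|z_i|^2+|w_i|^2)=4|\bld r|$ --- so that the combined reduction is well defined and the genuinely residual group acting in the final step is $U(2)/U(1)\cong SO(3)$ rather than all of $U(2)$, matching the $SU(2)$-reduction of Proposition \ref{prop:symp_red'}. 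Beyond this, the only remaining work is bookkeeping of normalizations (the factor of $2$ between the level $2r_i$ and the radius $r_i$) and checking that the symplectic forms match on the nose under the above identifications.
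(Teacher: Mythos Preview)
Your proposal is correct and follows essentially the same approach as the paper. The paper's argument is extremely terse---it consists of the single identity $(|z_i|^2+|w_i|^2)/2 = 2\|h^{-1}\circ\mu_{SU(2)}(z_i,w_i)\|_{\bR^3}$ stated just before the proposition, together with the reduction-in-stages summary displayed immediately after it---and your write-up simply spells out these same steps in more detail, including the compatibility check for the overlapping diagonal circle that the paper leaves implicit.
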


Propositions \ref{prop:symp_red'} and \ref{prop:symp_red} 
can be summarized as
$$
 \pol = \lb T_{U(n)} \GITl_{2 \br} 
         \bC^{n \times 2} \rb
         \GIT_{0} SU(2)
       = T_{U(n)} \GITl_{2 \br}
         \lb \bC^{n \times 2}
         \GIT_{|\br|} U(2) \rb
$$
where
$
 \prod_{i=1}^n S^2(r_i) = 
    T_{U(n)} \GITl_{2 \bld r} \mathbb{C}^{n \times 2}
$
and
$
  \Gr(2,n) =  \mathbb{C}^{n \times 2} \GIT_{|\bld{r}|} U(2).
$

\section{Bending Hamiltonians} \label{sc:bending_system}

Fix a convex $n$-gon $P \subset \mathbb{R}^2$
and call it the {\em reference $n$-gon}.
Let $e_1, \dots, e_n$ denote the sides of $P$ labeled in cyclic order. 
For an oriented diagonal $d$ of $P$, we write the corresponding 
diagonal of $\bld{x} \in \pol$ as $d(\bx)$.
If $d$ connect the $i$-th vertex and the $j$-th vertex for $i < j$, 
then $d(\bld{x})$ is given by
\[
  d(\bld{x}) = x_{i+1} + x_{i+2} + \dots + x_j
\]
or
\[
  d(\bld{x}) = x_{j+1} + \dots + x_{n} + x_1 + \dots + x_i
\]
depending on the orientation of $d$.
We define $\varphi_{d} : \pol \to \mathbb{R}$ 
to be the length function
\[
  \varphi_{d}(\bld{x}) = |d(\bld{x})|
\]
of the diagonal.
Kapovich and Millson \cite{Kapovich-Millson_SGPES} proved that
its Hamiltonian flow folds the polygon along the diagonal 
$d$ at a constant speed.
Thus $\varphi_{d}$ is called a {\it bending Hamiltonian}.

We say that two diagonals $d$ and $d'$ are {\it non-crossing}
if they do not intersect in the interior of $P$.

\begin{thm}[Kapovich and Millson \cite{Kapovich-Millson_SGPES},
Klyachko \cite{Klyachko_SP}]
If diagonals $d$ and $d'$ are non-crossing,
then $\varphi_{d}$ and $\varphi_{d'}$ are Poisson commutative.
Furthermore each choice of $(n-3)$ mutually non-crossing diagonals gives
a completely integrable system on $\pol$, and
the functions $\varphi_{d}$ give action variables.
\end{thm}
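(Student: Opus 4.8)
The plan is to first determine the Hamiltonian flow of a single bending Hamiltonian $\varphi_d$, then read off Poisson commutativity from the combinatorics of non-crossing diagonals, and finally build action-angle coordinates adapted to a triangulation. First I would realize $\varphi_d$ through the moment-map picture of Proposition \ref{prop:symp_red'}. For a diagonal $d$ joining the $i$-th and $j$-th vertices ($i<j$), the partial sum $\mu_d(\bx) := x_{i+1} + \cdots + x_j = d(\bx)$ is the moment map for the diagonal $SU(2)$-action on the sub-product $\mathcal{O}_{r_{i+1}} \times \cdots \times \mathcal{O}_{r_j}$, and $\varphi_d = |\mu_d|$. The elementary identity $d\varphi_d = \langle d\mu_d, \widehat{\mu}_d \rangle$, where $\widehat{\mu}_d = \mu_d/|\mu_d|$, shows that the Hamiltonian vector field $X_{\varphi_d}$ generates, at each point, the infinitesimal action of the unit vector $\widehat{\mu}_d \in \sqrt{-1}\su \cong \bR^3$. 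Under $\su \cong \mathfrak{so}(3)$ this is rotation about the axis $\mu_d$ at unit angular speed: the flow rigidly rotates the partial polygon $(x_{i+1}, \ldots, x_j)$ about the diagonal $\mu_d$ while fixing the complementary edges, recovering the bending flow of Kapovich and Millson. Since $\varphi_d$ is $SU(2)$-invariant, this flow descends to $\pol$.

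For the commutativity I would use that two non-crossing diagonals $d, d'$ have index intervals $[i+1,j]$ and $[i'+1,j']$ which are either disjoint or nested in the cyclic order. In the disjoint case $\varphi_d$ and $\varphi_{d'}$ depend on disjoint sets of the variables $x_k$ and hence Poisson commute on $\prod_i \mathcal{O}_{r_i}$; in the nested case, say $[i'+1,j'] \subset [i+1,j]$, the flow of $\varphi_d$ rotates every $x_k$ with $k \in [i+1,j]$ rigidly about $\mu_d$, so the sub-sum $\mu_{d'}$ rotates rigidly as well and its length $\varphi_{d'} = |\mu_{d'}|$ is preserved. In either case $\{\varphi_d, \varphi_{d'}\} = X_{\varphi_d}(\varphi_{d'}) = 0$ on $\prod_i \mathcal{O}_{r_i}$, and since both functions are $SU(2)$-invariant the bracket descends unchanged to the reduced space $\pol$.

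For the integrable-system statement, I would note that a set of $n-3$ mutually non-crossing diagonals is exactly a triangulation of the reference $n$-gon, cutting it into $n-2$ triangles. On the dense open locus of non-degenerate polygons --- those for which every diagonal in the triangulation has positive length and no triangle is collinear --- the congruence class of $\bx$ is determined by the $n-3$ diagonal lengths $\ell_k = \varphi_{d_k}(\bx)$ together with the $n-3$ dihedral bending angles $\theta_k$ across the diagonals. By the flow description above, $X_{\varphi_{d_k}}$ rotates one side of $d_k$ rigidly, hence advances $\theta_k$ at unit speed while fixing every $\ell_m$ and every $\theta_m$ with $m \ne k$; that is, $X_{\varphi_{d_k}} = \partial/\partial\theta_k$ in these coordinates. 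This immediately gives functional independence of $\varphi_{d_1}, \ldots, \varphi_{d_{n-3}}$ on the open locus, since $d\ell_1, \ldots, d\ell_{n-3}$ form part of a coordinate system, and together with the commutativity it exhibits $\pol$ as completely integrable.

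The main obstacle will be upgrading this to the sharper claim that the $\varphi_{d_k}$ are genuine \emph{action} variables. The commutativity gives $\{\ell_k, \ell_m\} = 0$ and the flow computation gives $\{\ell_k, \theta_m\} = X_{\varphi_{d_k}}(\theta_m) = \delta_{km}$. Each bending flow is $2\pi$-periodic, since a full rotation about the fixed axis $\mu_{d_k}$ returns the sub-polygon to itself and so acts trivially in $SO(3)$; therefore the commuting flows of $\ell_1, \ldots, \ell_{n-3}$ generate a Hamiltonian $(n-3)$-torus action on the non-degenerate locus. By the Arnold-Liouville theorem the connected components of the regular fibers are Lagrangian tori on which this action is transitive and the flows are linear, and the $\ell_k$, being the Hamiltonians of the $2\pi$-periodic circle factors, are precisely the associated action coordinates. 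The remaining care is in controlling the degenerate locus --- polygons with a vanishing diagonal or a collinear triangle --- on which the coordinates $(\ell_k, \theta_k)$ break down; but this locus is nowhere dense, so it does not affect complete integrability.
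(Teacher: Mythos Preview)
The paper does not supply its own proof of this theorem: it is stated as a result quoted from Kapovich--Millson and Klyachko, with no argument given beyond the preceding remark that the Hamiltonian flow of $\varphi_d$ folds the polygon along the diagonal $d$. There is therefore nothing in the paper to compare your proposal against.

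That said, your outline is essentially the standard Kapovich--Millson argument and is sound. The identification of $\varphi_d$ as the norm of a sub-moment map, the description of the Hamiltonian flow as rigid rotation about the diagonal axis, the disjoint/nested dichotomy for non-crossing diagonals, and the $(\ell_k,\theta_k)$ action-angle description all match the original source. The $2\pi$-periodicity claim is correct because the flow acts on the $x_k$ via the adjoint action, which factors through $SO(3)$; and your caveat about the degenerate locus (vanishing diagonals or collinear triangles) is the right one --- $\varphi_d$ fails to be smooth precisely where $d(\bx)=0$, but this is a proper subvariety and does not affect complete integrability or the action-variable property on the generic fiber.
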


Note that such a choice of $(n-3)$ diagonals defines a triangulation of 
the reference $n$-gon $P$.
Here we consider only triangulations whose vertices coincides with those of $P$.
Let $\Gamma$ denote the dual graph of a given triangulation.
The graph $\Gamma$ is a trivalent
with $n$ leaves labeled by sides $e_1, \dots, e_n$ of $P$
and $(n-3)$ interior edges labeled by the diagonals $d_1, \dots, d_{n-3}$.
We often say a ``triangulation'' $\Gamma$
by abuse of notation.
We write the completely integrable system given by
$\Gamma$ as
\[
  \Phi_{\Gamma} = ( \varphi_{d_1}, \dots, \varphi_{d_{n-3}}) : \pol \longrightarrow 
  \mathbb{R}^{n-3},
\]
and call it the bending system associated to $\Gamma$.
The image 
$$
 \Delta_\Gamma(\br)
  := \Phi_\Gamma(\pol)
  \subset \bR^{n-3}
$$
is the convex polytope defined by triangle inequalities.

\begin{expl}
The triangulation given by
$d_{\alpha} = e_1 + \dots + e_{\alpha +1}$ ($\alpha = 1, \dots, n-3$)
is called the {\em caterpillar}
(see Figure \ref{fg:caterpillar}).
\begin{figure}
  \begin{center}
    \includegraphics*[height=2.5cm, bb= 0 0 101 82]{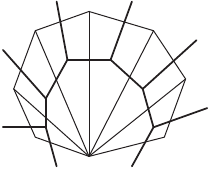}
  \end{center}
  \caption{The caterpillar}
  \label{fg:caterpillar}
\end{figure}
Let $(u_1, \dots, u_{n-3})$ be the coordinates on $\mathbb{R}^{n-3}$
corresponding to the bending Hamiltonians $\varphi_{d_{\alpha}}$.
The image 
$\Delta_{\Gamma}(\bld{r})$ is a polytope 
defined by triangle inequalities
\begin{equation}
  \begin{array}{rcccl}
    |r_1 - r_2 | &\le & u_1 &\le &r_1 + r_2,\\
    | u_1 - r_3 | &\le & u_2 &\le & u_1 + r_3,\\
    & & \vdots \\
    |u_{n-4} - r_{n-2} | &\le & u_{n-3} &\le & u_{n-4} + r_{n-2},\\
    |r_{n-1} - r_n | &\le &u_{n-3} &\le &r_{n-1} + r_n.
  \end{array}
  \label{eq:tri-ineq}
\end{equation}
\end{expl}

\begin{expl} \label{expl:pentagon}
Suppose $n=5$,
and consider a triangulation given by
$d_1 = e_1 + e_2$, $d_2 = e_1 + e_2 + e_3$.
If all side lengths $r_1, \dots, r_4$ are close,
then $\pol $ is isomorphic to $\mathbb{CP}^2$ blown up at four distinct points
(see \cite[Example 10.4]{Klyachko_SP}
or \cite[(6.3)]{Hausmann-Knutson_PSG}),
and the image $\Delta_{\Gamma}(\bld{r})$ is a heptagon 
shown in Figure \ref{fig:heptagon}.
\begin{figure}[h]
  \begin{center}
    \includegraphics*[height=5cm, bb= 0 0 189 161]{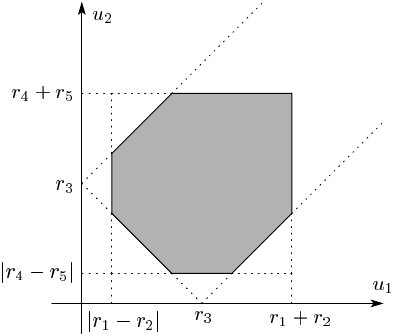}
  \end{center}
  \caption{The image of the bending system in Example \ref{expl:pentagon}.}
  \label{fig:heptagon}
\end{figure}
\end{expl}

\begin{expl} \label{ex:pentagon1}
Let $n=5$, and assume that
$r_1 > r_2 > 0$ and
\begin{equation*}
  r_1 + r_2 
  < \min ( -r_3+r_4+r_5, r_3-r_4+r_5, r_3+r_4-r_5).
\end{equation*}
In this case $\pol$ is isomorphic to 
$\mathbb{CP}^1 \times \mathbb{CP}^1$ 
(see \cite[(6.2)]{Hausmann-Knutson_PSG}
or \cite[Section 5]{Foth_MSP}).
We consider two triangulations $\Gamma_1$ and $\Gamma_2$ shown in 
Figure \ref{fig:triang_pent}.
The images $\Delta_{\Gamma_1} (\bold{r})$ and $\Delta_{\Gamma_2}(\bld{r})$ 
of the corresponding bending systems are shown 
in Figure \ref{fig:image1}.
$\Delta_{\Gamma_1}(\bld{r})$ is the moment polytope of the standard moment map 
on $\mathbb{CP}^1 \times \mathbb{CP}^1$, while
$\Delta_{\Gamma_2}(\bld{r})$ is that of the Hirzebruch surface 
$F_2 = \bP(\mathcal{O}_{\bP^1} \oplus \mathcal{O}_{\bP^1}(2))$ 
of degree two.
Note that $F_2$ is symplectomorphic to $\mathbb{CP}^1 \times \mathbb{CP}^1$
(but not isomorphic as complex manifolds).
\begin{figure}[h]
  \begin{center}
    \includegraphics*[bb = 0 0 212 79]{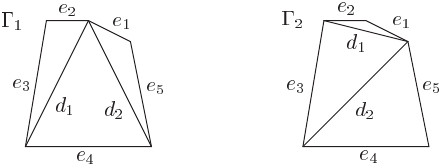}
  \end{center}
  \caption{Two triangulations in Example \ref{ex:pentagon1}}
  \label{fig:triang_pent}
\end{figure}
\begin{figure}[h]
  \begin{center}
    \includegraphics*[height=4.5cm, bb= 0 0 398 159]{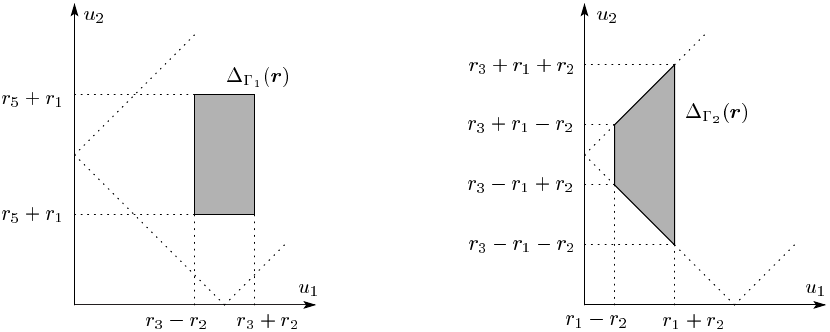}
  \end{center}
  \caption{The images $\Delta_{\Gamma_1}(\bld{r})$ and 
         $\Delta_{\Gamma_2}(\bld{r})$ in Example \ref{ex:pentagon1}}
  \label{fig:image1}
\end{figure}
\end{expl}

As we have seen in Example \ref{ex:pentagon1}, the bending system, 
and hence the corresponding image $\Delta_{\Gamma}(\bld{r})$, depends 
on the choice of a triangulation $\Gamma$. 
We compare the polytopes $\Delta_{\Gamma_i}(\bld{r})$
for different triangulations $\Gamma_1$ and $\Gamma_2$.
Recall that a {\it Whitehead move} (for triangulations) replaces a diagonal
of a quadrilateral with the other one (see Figure \ref{fig:whitehead1}).
\begin{figure}[h]
  \begin{center}
    \includegraphics*[bb = 0 0 188 72]{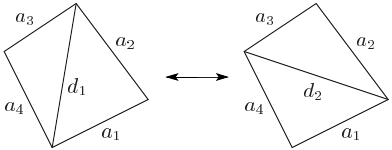}
  \end{center}
  \caption{A Whitehead move}
  \label{fig:whitehead1}
\end{figure}
Since triangulations can be transformed into each other by sequences
of Whitehead moves, it suffices to consider the case where 
$\Gamma_1$ and $\Gamma_2$ can be 
transformed to each other by a single Whitehead move.
Suppose that $\Gamma_1$ is transformed into $\Gamma_2$ by
a Whitehead move in a quadrilateral with sides $a_1, a_2, a_3, a_4$ 
replacing a diagonal $d= \pm(a_1+a_2)$ with $d'= \pm(a_2+a_3)$.
Note that $a_i$ is either a side $e_j$ of $P$ or a diagonal $d_{\alpha}$ 
contained in both of $\Gamma_1$ and $\Gamma_2$.
Let $u$, $u'$ and $u_i$ be the coordinates 
corresponding to $d$, $d'$ and $a_i$, respectively,
where we assume that $u_i=r_j$ is a constant if $a_i$ is a side $e_j$
of $P$.

\begin{prop}
  Under the above situation, the piecewise-linear transformation
  \begin{equation}
    \begin{split}
      u' & = u - \min(u_1+u_2, u_3+u_4) + \min(u_1+u_4, u_2+u_3) \\ 
         & = u + \min( u_1-u_2, u_2-u_1, u_3-u_4, u_4-u_3 )  \\
         & \qquad - \min( u_1-u_4, u_4-u_1, u_2-u_3, u_3-u_2 ) 
    \end{split}
    \label{eq:PL_transf1}
  \end{equation}
  gives a bijection 
  $\Delta_{\Gamma_1}(\bld{r}) \to \Delta_{\Gamma_2}(\bld{r})$.
  In particular, this map preserves the area of $\Delta_{\Gamma_i}(\bld{r})$ 
  and the number of integral points in $\Delta_{\Gamma_i}(\bld{r})$ 
  in the case where $\Delta_{\Gamma_i}(\bld{r})$ is an integral polytope
  $($i.e., $\bld{r} \in (\mathbb{Z}_{>0})^n)$.
\end{prop}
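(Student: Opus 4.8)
The plan is to reduce the statement to a one-dimensional computation, verify a single endpoint identity that turns out to be exactly the equivalence of the two forms of $u'$ in \eqref{eq:PL_transf1}, and then read off the area and lattice-point claims from the shape of the map.

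First I would write down the inequalities cutting out $\Delta_{\Gamma_1}(\br)$ and $\Delta_{\Gamma_2}(\br)$. Each interior edge of the dual tree lies in exactly two triangles, and the coordinate attached to that edge enters only the triangle inequalities coming from those two triangles. Since $\Gamma_1$ and $\Gamma_2$ share every triangle except the two incident to the flipped diagonal, the two polytopes are defined by the same system of inequalities in the common coordinates, together with
\[
  \max(|u_1-u_2|,\,|u_3-u_4|) \;\le\; u \;\le\; \min(u_1+u_2,\,u_3+u_4)
\]
for $\Delta_{\Gamma_1}(\br)$ (from the triangles $(a_1,a_2,d)$ and $(a_3,a_4,d)$), and
\[
  \max(|u_2-u_3|,\,|u_1-u_4|) \;\le\; u' \;\le\; \min(u_2+u_3,\,u_1+u_4)
\]
for $\Delta_{\Gamma_2}(\br)$. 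In particular $u$ occurs in no other inequality, and likewise for $u'$. Setting $c = \min(u_1+u_4,u_2+u_3) - \min(u_1+u_2,u_3+u_4)$, the transformation \eqref{eq:PL_transf1} fixes every coordinate except $u$ and sends $u \mapsto u' = u + c$. It therefore suffices to fix the remaining coordinates and show that the translation $u \mapsto u+c$ maps the interval $I_1 = [\max(|u_1-u_2|,|u_3-u_4|),\ \min(u_1+u_2,u_3+u_4)]$ bijectively onto $I_2 = [\max(|u_2-u_3|,|u_1-u_4|),\ \min(u_2+u_3,u_1+u_4)]$.

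Because the map is a translation (slope one), it is a bijection $I_1 \to I_2$ as soon as it matches both endpoints, and this forces the two intervals to have equal length. The upper endpoints match by the very definition of $c$. The lower endpoints match precisely when
\[
  \min(u_1+u_4,u_2+u_3) - \min(u_1+u_2,u_3+u_4) = \max(|u_1-u_4|,|u_2-u_3|) - \max(|u_1-u_2|,|u_3-u_4|),
\]
which is exactly the equality of the two expressions for $u'$ in \eqref{eq:PL_transf1}. I expect this identity to be the main point. It can be proved cleanly using $\min(x,y) = \tfrac12(x+y-|x-y|)$ and $\max(|a|,|b|) = \tfrac12(|a+b|+|a-b|)$: since $u_1+u_2+u_3+u_4$ is common to both sums on the left, both sides collapse to $\tfrac12\bigl(|u_1+u_2-u_3-u_4| - |u_1-u_2-u_3+u_4|\bigr)$. (Alternatively one checks it by a finite case analysis over the orderings of $u_1,\dots,u_4$ and the signs inside the absolute values.) This establishes the bijection $\Delta_{\Gamma_1}(\br) \to \Delta_{\Gamma_2}(\br)$.

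For the remaining assertions, note that on each region where the two minima defining $c$ are resolved, $c$ is an affine function of $u_1,u_2,u_3,u_4$ and the map is an affine shear fixing all coordinates other than $u$; its linear part is unipotent with a single nonzero off-diagonal row and hence has determinant one. Summing the volumes over these finitely many pieces shows that the map preserves the area (more generally the $(n-3)$-dimensional volume) of $\Delta_{\Gamma_i}(\br)$. Finally, when $\br \in (\mathbb{Z}_{>0})^n$ the value of $c$ is an integer whenever $u_1,\dots,u_4$ are, so the transformation and its inverse, which is the piecewise-linear map of the reverse Whitehead move and has the same shape, both preserve the integer lattice. They therefore restrict to mutually inverse bijections between the lattice points of $\Delta_{\Gamma_1}(\br)$ and those of $\Delta_{\Gamma_2}(\br)$, and the two polytopes contain the same number of integral points.
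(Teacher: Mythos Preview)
Your proof is correct and follows essentially the same strategy as the paper's: both reduce to the one-dimensional problem of matching the interval for $u$ to the interval for $u'$ under the translation $u\mapsto u+c$, and both then verify the single nontrivial endpoint identity. The only real difference is in how that identity is checked: the paper proves that the two intervals have equal length via a chain of $\min$-manipulations culminating in the auxiliary identity $\min(a,b)+\min(-a,b)-b=\min(a,-a,b,-b)$, whereas you match the lower endpoints directly using $\min(x,y)=\tfrac12(x+y-|x-y|)$ and $\max(|a|,|b|)=\tfrac12(|a+b|+|a-b|)$, which is a bit shorter. Your treatment of the volume and lattice-point claims (piecewise unipotent shear, integrality of $c$) is also slightly more explicit than the paper's, but the content is the same.
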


\begin{proof}
Since the triangle inequalities for $u$ are
\[
  \max (|u_1 - u_2|, |u_3 - u_4|)
  \le u \le
  \min (u_1 + u_2, u_3 + u_4),
\]
the length of the range of $u$ for fixed $u_1, \dots, u_4$ is
\[
  \min (u_1 + u_2, u_3 + u_4)  
   - \max (|u_1 - u_2|, |u_3 - u_4|).
\]
Since 
$\min \{a_i \, | \, i \in I \} + \min \{ b_j \,|\, j \in J \}
 = \min \{ a_i + b_j \,|\, i \in I, \, j \in J \}$, we have
\begin{align}
  &\min (u_1 + u_2, u_3 + u_4)  
   - \max (|u_1 - u_2|, |u_3 - u_4|) \notag \\
  & = \min (u_1 + u_2, u_3 + u_4) 
   + \min (u_1 - u_2, u_2 - u_1,  
           u_3 - u_4, u_4 - u_3) \notag \\
  &= \min \{ 2u_i, u_1 + u_2 + u_3 + u_4 - 2u_j \,| \,
    1 \le i, j \le 4 \}.
   \label{eq:length1}
\end{align}
Similarly the length of the range of $u'$ is
\begin{align*}
  &\min (u_1 + u_4, u_2 + u_3) 
   + \min (u_1 - u_4, u_4 - u_1,  
           u_2 - u_3, u_3 - u_2) \\
   &= \min \{ 2u_i, u_1 + u_2 + u_3 + u_4 - 2u_j \,| \,
    1 \le i, j \le 4 \},
\end{align*}
which is identical to \eqref{eq:length1}.
Hence \eqref{eq:PL_transf1} gives an area preserving transformation.
Moreover, if $u_1, \dots, u_4 \in \mathbb{Z}$,
then \eqref{eq:PL_transf1} is defined over $\mathbb{Z}$,
and hence the number of integral $u$ is also preserved.
\end{proof}

\begin{expl} \label{ex:pentagon2}
Let $\Gamma_1$ and $\Gamma_2$ be the triangulation 
in Example \ref{ex:pentagon1}.
$\Gamma_1$ can be transformed into $\Gamma_2$ by a two-step Whitehead move
shown in Figure \ref{fig:whitehead2}.
\begin{figure}[h]
  \begin{center}
    \includegraphics[bb= 0 0 296 79]{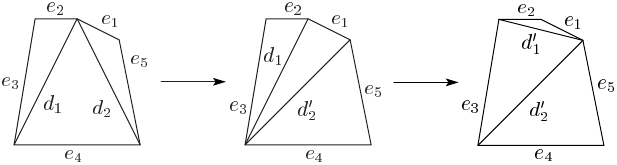}
    \end{center}
    \caption{Whitehead moves sending $\Gamma_1$ to $\Gamma_2$.}
  \label{fig:whitehead2}
\end{figure}
Let $u_{\alpha}$ and $u_{\alpha}'$ ($\alpha = 1,2$) 
be the coordinates corresponding 
to $d_{\alpha}$ and $d'_{\alpha}$, respectively.
The piecewise linear map corresponding to the first Whitehead move is 
given by
\[
  (u_1, u'_2) =
  (u_1, u_2 + \max (|u_1-r_1|, |r_4-r_5|)
            - \max (|u_1-r_4|, r_5-r_1)
  ).
\]
Since 
\begin{align*}
  \max (|u_1-r_1|, |r_4-r_5|) &= u_1 - r_1,\\
  \max (|u_1-r_4|, r_5-r_1) &= r_5- r_1
\end{align*}
on $r_3-r_2 < u_1 < r_3 + r_2$, 
the above map is given by
\[
  (u_1, u'_2) = (u_1, u_2+u_1-r_5)
\]
on $\Delta_{\Gamma_1}(\bld{r})$ (see Figure \ref{fig:PLtransf1}).
\begin{figure}[h]
  \begin{center}
    \includegraphics[bb= 0 0 311 121]{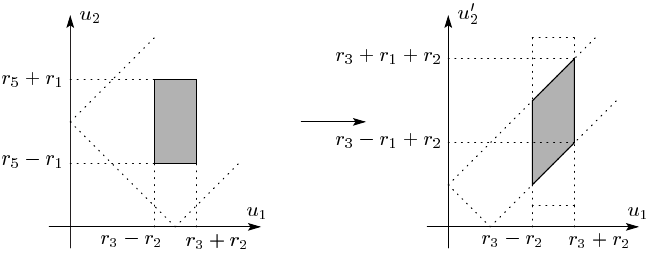}
  \end{center}
  \caption{The first piecewise-linear transformation in Example 
             \ref{ex:pentagon2}}
  \label{fig:PLtransf1}
\end{figure}

The piecewise-linear transformation for the second Whitehead move is
\[
  (u'_1, u'_2) = 
  (u_1 + \max (|u'_2-r_3|, (r_1-r_2) )
       - \max (|u'_2-r_1|, r_3-r_2 ),
  u'_2),
\]
which coincides with
\[
  (u'_1, u'_2) = 
  (u_1 - \min( u'_2- r_2, r_3-r_1),
   \, u'_2)
\]
on the image of $\Delta_{\Gamma_1}(\bld{r})$.
\begin{figure}[h]
  \begin{center}
    \includegraphics[bb = 0 0 333 120]{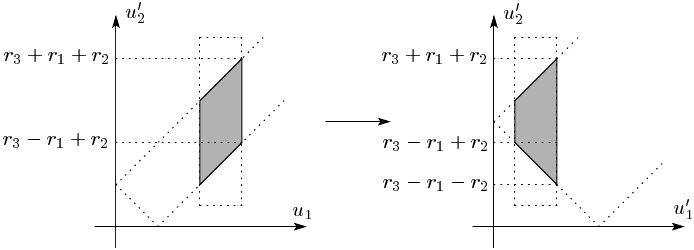}
    \end{center}
    \caption{The second piecewise-linear transformation in  
             Example \ref{ex:pentagon2}}
  \label{fig:PLtransf2}
\end{figure}
\end{expl}

\section{Completely integrable systems on $\Gr(2,n)$}
 \label{sc:nohara_system}

Hausmann and Knutson \cite{Hausmann-Knutson_PSG} proved that
the Gelfand-Cetlin system \cite{Guillemin-Sternberg_GCS} on $\Gr(2,n)$
induces the bending system on $\pol$ under the symplectic reduction
in the case where the triangulation is the caterpillar.
In this section we construct a completely integrable system on $\Gr(2,n)$
that induces the bending systems for each triangulation $\Gamma$.

Fix a triangulation $\Gamma$ of the reference $n$-gon $P$.
For each side $e_i$  ($i = 1, \dots, n$) and diagonal $d_{\alpha}$ 
($\alpha = 1, \dots, n-3$) we associate a subgroup of $U(n)$ as follows.
For a side $e_i$ we associate a subgroup isomorphic to $S^1$ given by
\[
  S^1_i = S^1_{e_i} = \begin{pmatrix}
           1_{i-1} && \\
           & S^1 & \\
           && 1_{n-i}
          \end{pmatrix}
  \subset U(n).
\]
For a diagonal  $d_{\alpha} = \sum_{i \in I_{\alpha}} e_i$, the corresponding 
subgroup $U_{\alpha} = U_{d_{\alpha}} \cong U(|I_{\alpha}|)$ is defined by
\[
  U_{d_{\alpha}} = \{ (g_{ij}) \in U(n) \, | \, 
           (g_{ij})_{i,j \in I_{\alpha}} \in U(|I_{\alpha}|) 
           \text{ and $g_{ij}=\delta_{ij}$ for $(i, j) \not\in I_{\alpha}^2$}  \}.
\]

\begin{rem} \label{rem:subgroups}
The non-crossing condition for $d_{\alpha}, d_{\beta}$ is equivalent to that 
$I_{\alpha}$ and $I_{\beta}$ satisfy either 
$I_{\alpha} \supset I_{\beta}$, $I_{\alpha} \subset I_{\beta}$, or
$I_{\alpha} \cap I_{\beta} = \emptyset$.
Hence 
each pair $G_1, G_2 \in \{U_1, \dots, U_{n-3}, S^1_1, \dots, S^1_n\}$
of subgroups satisfies either $G_1 \subset G_2$, $G_1 \supset G_2$, or 
$G_1 \cap G_2 = \{ 1 \}$ in $U(n)$.
\end{rem}

Recall that the moment map
$\mu_{U(n)} : \Gr(2,n) \to \mathfrak{u}(n)^* \cong 
\sqrt{-1} \mathfrak{u}(n)$ 
of the $U(n)$-action is given by
\[
  \begin{bmatrix}
    z_1 & w_1 \\
    \vdots & \vdots \\
    z_n & w_n
  \end{bmatrix}
  \longmapsto \frac 12
  \begin{pmatrix}
    z_1 & w_1 \\
    \vdots & \vdots \\
    z_n & w_n
  \end{pmatrix}
  \begin{pmatrix}
    \overline{z}_1 & \dots & \overline{z}_n \\
    \overline{w}_1 & \dots & \overline{w}_n
  \end{pmatrix}
  =
  \left( \frac{z_i \overline{z}_j + w_i \overline{w}_j}2 
  \right)_{i,j}.
\]
Then the moment maps for $S^1_{e_i}$ and $U_{d_{\alpha}}$
are given by
\[
  \psi_{e_i} : \Gr(2,n) \longrightarrow \mathbb{R},
  \quad \psi_{e_i} (z,w) = \frac{|z_i|^2+|w_i|^2}2
\]
and 
\[
  \mu_{U_{\alpha}}: \Gr(2,n) \to \sqrt{-1} \mathfrak{u}(|I_{\alpha}|),
  \quad 
  \mu_{U_{\alpha}}(z, w) = 
  \left( 
  \frac{z_i \overline{z}_j + w_i \overline{w}_j}2 
  \right)_{i, j \in I_{\alpha}},
\] 
respectively.
Note that $\mu_{U(n)}$ embeds $\Gr(2,n)$ into 
$\sqrt{-1} \mathfrak{u}(n)$ as the adjoint orbit of
$\mathrm{diag}(|\bld{r}|, |\bld{r}|, 0, \dots, 0)$.
In particular, $\mu_{U(n)}(z,w)$ has rank two for any $[z,w] \in \Gr(2,n)$, 
and hence the rank of $\mu_{U_{\alpha}}(z,w)$ is  at most two.
Let
$\lambda_{\alpha,1} (z, w) \ge \lambda_{\alpha,2} (z, w) \ge 0$
be the first and second eigenvalues of $\mu_{U_{\alpha}}(z, w)$.


Recall that the moment map is a Poisson morphism:

\begin{prop}[{\cite{Kostant_OSSRT}}] \label{pr:moment_Poisson}
Let $G$ be a compact Lie group acting on a symplectic manifold
$(X, \omega)$ with a moment map $\mu : X \to \mathfrak{g}^*$.
For $f_1, f_2 \in C^{\infty}(\mathfrak{g}^*)$, we have
\[
  \{ \mu^* f_1, \mu^* f_2 \}_{X}
  = \mu^* \{ f_1, f_2 \}_{\mathfrak{g}^*},
\]
where $\{ \,\, , \, \}_{\mathfrak{g}^*}$ is
the natural Poisson structure on $\mathfrak{g}^*$.
\end{prop}


This immediately yields the following:

\begin{corollary} \label{cr:involutive1}
Let $G$ be a compact Lie group acting on a symplectic manifold
$(X, \omega)$ with a moment map $\mu : X \to \mathfrak{g}^*$,
and $f_1, f_2 \in C^{\infty}(\mathfrak{g}^*)$
be smooth functions on $\frakg^*$.
If either $f_1$ or $f_2$ is
$\mathrm{Ad}^*(G)$-invariant
{\rm (}i.e., if it is constant on each 
coadjoint orbit{\rm )},
then $\mu^* f_1$ and $\mu^* f_2$ are 
Poisson commutative$:$
\[
  \{ \mu^* f_1, \mu^* f_2 \}_{X} = 0.
\]
\end{corollary}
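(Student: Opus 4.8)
The plan is to derive the corollary as an immediate consequence of Proposition \ref{pr:moment_Poisson}, of which it is essentially the ``in particular'' clause restated for emphasis. First I would apply the Poisson-morphism identity
\[
  \{ \mu^* f_1, \mu^* f_2 \}_X = \mu^* \{ f_1, f_2 \}_{\mathfrak{g}^*}
\]
supplied by that proposition. This reduces the claim to showing that the Lie--Poisson bracket $\{ f_1, f_2 \}_{\mathfrak{g}^*}$ vanishes identically on $\mathfrak{g}^*$ as soon as one of the two factors, say $f_1$, is $\mathrm{Ad}^*(G)$-invariant; pulling back a function that is identically zero then gives $\mu^* \{ f_1, f_2 \}_{\mathfrak{g}^*} = 0$ on $X$.

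For the reduced statement I would recall the explicit Lie--Poisson bracket on $\mathfrak{g}^*$: under the identification $(\mathfrak{g}^*)^* \cong \mathfrak{g}$, and writing $(df_i)_\xi \in \mathfrak{g}$ for the differential of $f_i$ at $\xi \in \mathfrak{g}^*$, one has
\[
  \{ f_1, f_2 \}_{\mathfrak{g}^*}(\xi) = \la \xi, [ (df_1)_\xi, (df_2)_\xi ] \ra .
\]
The key step is then to observe that $\mathrm{Ad}^*(G)$-invariance makes $f_1$ a Casimir function. Differentiating $f_1(\mathrm{Ad}^*_{\exp(t\eta)} \xi) = f_1(\xi)$ at $t = 0$ yields $\la \mathrm{ad}^*_\eta \xi, (df_1)_\xi \ra = 0$, which by definition of the coadjoint action means $\la \xi, [\eta, (df_1)_\xi] \ra = 0$ for every $\eta \in \mathfrak{g}$. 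Choosing $\eta = (df_2)_\xi$ gives $\{ f_1, f_2 \}_{\mathfrak{g}^*}(\xi) = 0$ at each $\xi$, as required.

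I do not expect any genuine obstacle: the entire content is already packaged in Proposition \ref{pr:moment_Poisson}, and the only point to verify is the standard fact that coadjoint-invariant (equivalently, Casimir) functions lie in the center of the Lie--Poisson algebra. The statement is isolated as a separate corollary only because it is the exact form invoked repeatedly below to certify Poisson-commutativity of the bending Hamiltonians and of the eigenvalue functions $\lambda_{\alpha,1}, \lambda_{\alpha,2}$ extracted from the block moment maps $\mu_{U_\alpha}$.
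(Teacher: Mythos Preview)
Your proposal is correct and matches the paper exactly: the paper offers no separate argument for this corollary beyond the words ``This immediately yields the following'', since the statement is verbatim the ``in particular'' clause of Proposition~\ref{pr:moment_Poisson}. Your expansion---reducing via the Poisson-morphism identity to the vanishing of $\{f_1,f_2\}_{\mathfrak{g}^*}$ and then checking that an $\mathrm{Ad}^*(G)$-invariant function is Casimir for the Lie--Poisson bracket---is the standard verification the paper leaves implicit.
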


We also have the following:

\begin{lem} \label{lm:involutivity1}
Let $(X,\omega)$ be a symplectic manifold, and assume that two Lie groups
$G_1$, $G_2$ act on $(X, \omega)$ in Hamiltonian fashion 
with moment maps
$\mu_i : X \to \mathfrak{g}_i^*$, $i=1, 2$.
If $G_1$ and $G_2$ satisfy either $G_1 \subset G_2$ or  
$G_1 \cap G_2 = \{ 1 \}$ in the group of symplectomorphisms,
then 
\[
  \{ \mu_1^* f_1, \mu_2^* f_2 \}_X = 0
\]
for $\mathrm{Ad}^*(G_i)$-invariant functions 
$f_i \in C^{\infty}(\mathfrak{g}_i^*)^{G_i}$,
$i=1,2$.
\end{lem}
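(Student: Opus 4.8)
The plan is to reduce both cases to the Poisson-morphism property of the moment map (Proposition \ref{pr:moment_Poisson}), which is already available. The only work is to exhibit $\mu_1^* f_1$ and $\mu_2^* f_2$ as pullbacks under a \emph{single} moment map; once this is done, the $\Ad^*$-invariance of $f_2$ forces the bracket to vanish exactly as in Corollary \ref{cr:involutive1}.

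In the case $G_1 \subset G_2$, the hypothesis means that the $G_1$-action is the restriction of the $G_2$-action, so $\mu_1 = \pi \circ \mu_2$, where $\pi : \mathfrak{g}_2^* \to \mathfrak{g}_1^*$ is dual to the inclusion $\mathfrak{g}_1 \hookrightarrow \mathfrak{g}_2$ (any normalization constant is immaterial for what follows). Hence $\mu_1^* f_1 = \mu_2^*(f_1 \circ \pi)$ is the $\mu_2$-pullback of a function on $\mathfrak{g}_2^*$, and applying Proposition \ref{pr:moment_Poisson} to the single group $G_2$ gives
\[
 \{\mu_1^* f_1, \mu_2^* f_2\}_X = \{\mu_2^*(f_1\circ\pi), \mu_2^* f_2\}_X = \mu_2^*\{f_1\circ\pi, f_2\}_{\mathfrak{g}_2^*} = 0,
\]
since $f_2$ is $\Ad^*(G_2)$-invariant.

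In the case $G_1 \cap G_2 = \{1\}$, I would first record that the hypothesis makes the two actions commute, so that $G_1$ and $G_2$ generate an internal direct product $G = G_1 \times G_2$ inside $\mathrm{Symp}(X,\omega)$; this is precisely what occurs for the block subgroups of Remark \ref{rem:subgroups} with disjoint index sets. The $G$-action is then Hamiltonian with moment map $\mu = (\mu_1, \mu_2) : X \to \mathfrak{g}_1^* \oplus \mathfrak{g}_2^* = \mathfrak{g}^*$. Viewing $f_1, f_2$ as functions on $\mathfrak{g}^*$ depending only on the first, resp. second, factor, I note that in a direct product $G_1$ acts trivially on $\mathfrak{g}_2$, so $\Ad^*(g_1, g_2)$ acts on the $\mathfrak{g}_2^*$-factor by $\Ad^*(g_2)$ alone; hence the $\Ad^*(G_2)$-invariance of $f_2$ upgrades to $\Ad^*(G)$-invariance of $f_2$ on $\mathfrak{g}^*$. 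Proposition \ref{pr:moment_Poisson} applied to $G$ then yields
\[
 \{\mu_1^* f_1, \mu_2^* f_2\}_X = \{\mu^* f_1, \mu^* f_2\}_X = \mu^*\{f_1, f_2\}_{\mathfrak{g}^*} = 0.
\]

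The genuinely delicate point is the second case, and it is not the computation but the verification that the stated hypothesis supplies commuting actions: trivial intersection in $\mathrm{Symp}(X,\omega)$ must be promoted to the direct-product structure underlying the joint moment map $(\mu_1,\mu_2)$ and the invariance upgrade for $f_2$. In the setting of Remark \ref{rem:subgroups} this is immediate, since disjoint index sets make the corresponding block subgroups of $U(n)$ commute. An alternative, more hands-on route avoids the product group altogether: because $f_1$ is $\Ad^*(G_1)$-invariant, the Hamiltonian flow of $\mu_1^* f_1$ moves each point along a one-parameter subgroup $\exp(t\xi) \subset G_1$ (with $\xi \in \mathfrak{g}_1$ constant along the trajectory, since $\mu_1$ is preserved), and commuting of the actions shows this flow preserves $\mu_2$, hence $\mu_2^* f_2$; differentiating at $t=0$ gives the vanishing of the bracket.
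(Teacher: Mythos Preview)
Your proof is correct and follows essentially the same approach as the paper: in both cases you reduce to Proposition~\ref{pr:moment_Poisson} by writing $\mu_1^*f_1$ and $\mu_2^*f_2$ as pullbacks along a single moment map (for $G_2$ in the first case, for $G_1\times G_2$ in the second), and then use the $\Ad^*$-invariance of one of the functions. Your observation about the second case is well taken: the paper simply asserts that $G=G_1\times G_2$ acts with moment map $(\mu_1,\mu_2)$, without addressing why trivial intersection in $\mathrm{Symp}(X,\omega)$ yields commuting actions, so you are being more careful than the paper here; in the intended application (Remark~\ref{rem:subgroups}) the block subgroups with disjoint index sets visibly commute, which is all that is actually needed.
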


\begin{proof}
We first assume that $G_1 \subset G_2$.
Then $\mu_1$ is a composition of 
$\mu_2 : X \to \mathfrak{g}_2^*$ and the natural projection
$p: \mathfrak{g}_2^* \to \mathfrak{g}_1^*$.
From Proposition \ref{pr:moment_Poisson} we have
\[
  \{ \mu_1^* f_1, \mu_2^* f_2 \}_X = 
  \{ \mu_2^* p^* f_1, \mu_2^* f_2 \}_X =
  \mu_2^* \{p^* f_1, f_2 \}_{\mathfrak{g}_2^*} = 0.
\]
Next we consider the second case.
Then the moment map of the action of $G = G_1 \times G_2$ 
is given by
\[
  \mu = (\mu_1, \mu_2) : X \longrightarrow  
  \mathfrak{g}_1^* \oplus \mathfrak{g}_2^*,
\]
and we have $\mu_i = p_i \circ \mu$, where 
$p_i : \mathfrak{g}_1^* \oplus \mathfrak{g}_2^* \to \mathfrak{g}_i^*$
is the $i$-th projection.
Since $\mu_i^* f_i = \mu^* (p_i^* f_i)$ and
$p_i^* f_i \in C^{\infty} (\mathfrak{g}_1^* \oplus \mathfrak{g}_2^*)^G$, 
the Poisson commutativity follows from the argument in the first case.
\end{proof}

This shows the following:

\begin{prop} \label{prop:Poisson_commutative}
The functions $\psi_{e_i}$, $i=1, \dots , n$
and $\lambda_{\alpha,j}$, $\alpha=1, \dots, n-3$, $j=1,2$
are mutually Poisson commutative.
\end{prop}

\begin{proof}
This follows from
Remark \ref{rem:subgroups},
Proposition \ref{pr:moment_Poisson},
Lemma \ref{lm:involutivity1}, and
the fact
that $\lambda_{\alpha,j}$ and $\psi_{e_i}$ are
pull-backs of  invariant functions.
\end{proof}

Since the number 
$\# \{\lambda_{\alpha,j}, \psi_{e_i} \} = 3n-6$
of functions we have obtained is
larger than $\frac 12 \dim_{\mathbb R} \Gr(2,n) = \N$,
these functions can not be functionally independent.
In fact there is one linear relation 
for each triangle in the triangulation.
For example, for a triangle whose edges are three diagonals, say, 
$d_1$, $d_2$, and $d_3 = d_1 + d_2$, $U_1 \times U_2$ is 
a subgroup of $U_3$, and the moment map $\mu_{U_3}$ has the form
\[
  \mu_{U_3} = \begin{pmatrix}
                 \mu_{U_1} & * \\
                 * & \mu_{U_2}
              \end{pmatrix}.
\]
Hence we have
\begin{align*}
  \lambda_{3,1} + \lambda_{3, 2} 
  &= \tr \mu_{U_3}\\
  &= \tr \mu_{U_1} + \tr \mu_{U_2}\\
  &= \lambda_{1,1} + \lambda_{1, 2}
   + \lambda_{2,1} + \lambda_{2, 2}.
\end{align*}
Set $\psi_{d_{\alpha}} = \lambda_{\alpha, 2}$ for each diagonal $d_{\alpha}$. 
We claim that 
\[
  \Psi_{\Gamma} := (\psi_{d_1}, \psi_{d_2}, \dots, \psi_{d_{n-3}},
                   \psi_{e_1}, \dots, \psi_{e_{n-1}})
  : \Gr(2,n) \longrightarrow \bR^\N
\]
is a completely integrable system on $\Gr(2,n)$.

\begin{prop} \label{prop:reduction_CIS}
  $(\psi_{d_1}, \psi_{d_2}, \dots, \psi_{d_{n-3}})$ induces 
  the bending system $\Phi_{\Gamma}$ on $\pol$  by the symplectic reduction
  $(\ref{eq:symp_red})$ up to sign and additive constants.
\end{prop}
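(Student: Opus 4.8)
The plan is to verify the claim pointwise along the level set $\mu_{T_{U(n)}}^{-1}(2\br)$ and then pass to the quotient. Since $\lambda_{\alpha,2}$ is the second eigenvalue of $\mu_{U_\alpha}(z,w)$ and the $T_{U(n)}$-action sends $\mu_{U_\alpha}$ to $t_\alpha\,\mu_{U_\alpha}\,t_\alpha^{*}$ for the diagonal unitary block $t_\alpha=(t_i)_{i\in I_\alpha}$, the function $\psi_{d_\alpha}=\lambda_{\alpha,2}$ is invariant under conjugation and hence constant on $T_{U(n)}$-orbits; so it descends to $\pol$, and it suffices to compute its restriction to $\mu_{T_{U(n)}}^{-1}(2\br)$ and compare it with $\varphi_{d_\alpha}$. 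The goal of the computation is to show this restriction equals $-\varphi_{d_\alpha}$ plus a constant depending only on $\br$.

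First I would reduce the spectral problem to size $2$. Writing $Z_\alpha=(z_i)_{i\in I_\alpha}$ and $W_\alpha=(w_i)_{i\in I_\alpha}$ as column vectors, one has $\mu_{U_\alpha}(z,w)=\tfrac12\bigl(Z_\alpha Z_\alpha^{*}+W_\alpha W_\alpha^{*}\bigr)$, so its nonzero eigenvalues are $\tfrac12$ times those of the Gram matrix
\[
  M_\alpha=\begin{pmatrix}
    \sum_{i\in I_\alpha}|z_i|^2 & \sum_{i\in I_\alpha}\overline{z}_i w_i\\
    \sum_{i\in I_\alpha} z_i\overline{w}_i & \sum_{i\in I_\alpha}|w_i|^2
  \end{pmatrix}.
\]
Thus $\lambda_{\alpha,2}=\tfrac14\tr M_\alpha-\tfrac12\rho_\alpha$, where $\rho_\alpha\ge0$ is the nonnegative eigenvalue of the traceless part of $M_\alpha$.

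The conceptual heart is the identification of that traceless part with the diagonal vector. Comparing $M_\alpha$ with \eqref{eq:SU(2)-moment-map}, the traceless part of $\tfrac12 M_\alpha$ is exactly $\sum_{i\in I_\alpha}\mu_{SU(2)}(z_i,w_i)$, which under the linear identification $h$ of \eqref{eq:h} is $h\bigl(d_\alpha(\bx)\bigr)$, since $\mu_{SU(2)}(z_i,w_i)=h(x_i)$ and $d_\alpha(\bx)=\sum_{i\in I_\alpha}x_i$. As the two eigenvalues of $h(v)$ are $\pm|v|$, this gives $\tfrac12\rho_\alpha=|d_\alpha(\bx)|=\varphi_{d_\alpha}(\bx)$, so that $\lambda_{\alpha,2}=\tfrac14\tr M_\alpha-\varphi_{d_\alpha}(\bx)$. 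Finally, on the reduction locus $\mu_{T_{U(n)}}^{-1}(2\br)$ one has $|z_i|^2+|w_i|^2=4r_i$ by \eqref{eq:T_moment_map} and \eqref{eq:symp_red}, whence $\tfrac14\tr M_\alpha=\sum_{i\in I_\alpha}r_i$ is a constant depending only on $\br$. Therefore $\psi_{d_\alpha}$ restricts to $\sum_{i\in I_\alpha}r_i-\varphi_{d_\alpha}$, i.e.\ to $-\varphi_{d_\alpha}$ up to an additive constant, for each diagonal $d_\alpha$.

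I do not expect a genuine obstacle: the argument is a direct computation, and the only delicate points are keeping track of the normalization constants and recognizing the structural identity that the traceless part of the Gram matrix is precisely the diagonal vector. It is this identity that makes the spectral gap of $\mu_{U_\alpha}$ equal to twice the bending Hamiltonian, and hence forces the sign flip and the constant shift in the final comparison.
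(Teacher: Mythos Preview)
Your proof is correct and follows essentially the same approach as the paper: both reduce the spectrum of $\mu_{U_\alpha}$ to that of the $2\times 2$ Gram matrix via the $AB$-versus-$BA$ trick, split this matrix into its traceless part (identified with $\sum_{i\in I_\alpha}\mu_{SU(2)}(z_i,w_i)=h(d_\alpha(\bx))$) and its scalar part (which becomes the constant $\sum_{i\in I_\alpha}r_i$ on the level set), and read off $\lambda_{\alpha,2}=-\varphi_{d_\alpha}+\sum_{i\in I_\alpha}r_i$. Your presentation is slightly more explicit in writing $\lambda_{\alpha,2}=\tfrac14\tr M_\alpha-\tfrac12\rho_\alpha$, but the argument is identical in substance.
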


\begin{proof}
Note that
the set of non-zero eigenvalues of $AB$
for not necessarily square matrices $A$ and $B$
is bijective with the set of non-zero eigenvalues of $BA$.
It follows that the first and second eigenvalues 
$\lambda_{\alpha,1}, \lambda_{\alpha,2}$ of
\[
  \mu_{U_{\alpha}} (z, w) = \frac 12
    (z_i, w_i)_{i \in I_{\alpha}} 
    (z_i, w_i)_{i \in I_{\alpha}}^*
  \in \sqrt{-1} \mathfrak{u} (|I_{\alpha}|)
\]
coincide with those of
\begin{equation}
  \frac 12
    (z_i, w_i)_{i \in I_{\alpha}}^* 
    (z_i, w_i)_{i \in I_{\alpha}}
  = \frac 12
  \sum_{i \in I_{\alpha}}
  \begin{pmatrix}
    |z_i|^2 & \overline{z}_i w_i \\
    z_i \overline{w}_i & |w_i|^2
  \end{pmatrix}
  \in \sqrt{-1} \mathfrak{u}(2).
  \label{eq:shifted_diagonal}
\end{equation}
Since (\ref{eq:shifted_diagonal}) is $T_{U(n)}$-invariant, 
its eigenvalues $\lambda_{\alpha,j}$ descends to functions on $\pol$.
Recall from Section \ref{sc:polygon_space}
that sides of the polygon are given by
\[
  \mu_{SU(2)}(z_i, w_i) = \frac 12 \begin{pmatrix}
    (|z_i|^2-|w_i|^2)/2 & \overline{z}_i w_i \\
    z_i \overline{w}_i & (|w_i|^2-|z_i|^2)/2
  \end{pmatrix},
\]
considered as an element of $\bR^3$
by the isomorphism $h : \bR^3 \simto \su$.
Then (\ref{eq:shifted_diagonal}) can be written as
\[
  \sum_{i\in I_{\alpha}} \mu_{SU(2)} (z_i, w_i)
  + \frac 14
  \sum_{i\in I_{\alpha}} \begin{pmatrix}
    |z_i|^2+|w_i|^2 & 0 \\
    0 & |z_i|^2+|w_i|^2
  \end{pmatrix},
\]
whose second term is a constant
$\sum_{i \in I_{\alpha}} \mathrm{diag} (r_i, r_i)$ 
on the level set $\mu_{T_{U(n)}}^{-1}(2 \bld{r})$,
while the first term 
$\sum_{i \in I_{\alpha}} \mu_{SU(2)} (z_i, w_i)$, which is the $\alpha$-th
diagonal, has eigenvalues $\pm \varphi_{\alpha}$.
Hence we have
\[
  \lambda_{\alpha,2} = - \varphi_{\alpha} 
  + \sum_{i \in I_{\alpha}} r_i
\]
on $\pol$.
\end{proof}

The next lemma completes the proof of Theorem \ref{th:nohara_system}.
\begin{lem} \label{lem:fun-indep}
  The functions in $\Psi_{\Gamma}$ are functionally independent.
\end{lem}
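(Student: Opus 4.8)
The plan is to establish functional independence by a dimension count, exhibiting a single point (or open set) of $\Gr(2,n)$ where the differentials $d\psi_{d_\alpha}$ and $d\psi_{e_i}$ are linearly independent. Since functional independence need only hold on an open dense subset, it suffices to find one point where the $2n-4$ differentials are linearly independent; the set where this holds is open, and if nonempty it will be dense because the relevant Jacobian is real-analytic and not identically zero. So the task reduces to a local computation at a well-chosen configuration.

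**First I would** choose a convenient model for a point. By Proposition~\ref{prop:reduction_CIS}, on the reduced space the $\psi_{d_\alpha}$ restrict to (affine images of) the bending Hamiltonians $\varphi_{d_\alpha}$, which are already known to be functionally independent on $\pol$ by the Kapovich--Millson--Klyachko theorem. The $\psi_{e_i}$ are the components of the $T_{U(n)}$-moment map \eqref{eq:T_moment_map}, whose differentials are independent along the generic (open, maximal-rank) locus of the torus action. The natural strategy is therefore a \emph{splitting} argument: the $d\psi_{e_i}$ span the cotangent directions transverse to the $T_{U(n)}$-orbits, i.e.\ the pullback of $T^*(\bR^n)$ under $\mu_{T_{U(n)}}$, while the $d\psi_{d_\alpha}$ descend to the quotient and there give the independent differentials of the bending system. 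Concretely I would fix $[z,w]$ lying over a regular value $2\br$ of $\mu_{T_{U(n)}}$ that maps to a smooth point of $\pol$ represented by a generic (nondegenerate, non-crossing-diagonal) polygon; at such a point the fiber directions and base directions decouple.

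**The key steps, in order:** (1) Verify that $\mu_{T_{U(n)}}$ has maximal rank $n$ at the chosen point, so the $n$ functions $\psi_{e_i}$ have independent differentials and their common level set is a smooth submanifold on which $T_{U(n)}$ acts locally freely; note the linear relation $\sum_i \psi_{e_i} = |\br|$ is already accounted for since we use only $\psi_{e_1},\dots,\psi_{e_{n-1}}$. (2) Observe that each $\psi_{d_\alpha}$ is $T_{U(n)}$-invariant (it is an eigenvalue of the $T_{U(n)}$-invariant matrix \eqref{eq:shifted_diagonal}), hence its differential annihilates the $T_{U(n)}$-orbit directions; this is exactly what makes the two families independent from each other, since $d\psi_{e_i}$ are nonzero precisely on those orbit directions. (3) Use Proposition~\ref{prop:reduction_CIS} together with the cited integrability of the bending system to conclude that the $n-3$ descended differentials $d\psi_{d_\alpha}$ are independent on the reduced tangent space. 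Combining (2) and (3) gives independence of all $(n-3)+(n-1)=2n-4$ differentials.

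**The hard part will be** making step~(1)--(2) rigorous at a point where $\pol$ might in principle be singular or where some $\lambda_{\alpha,1}=\lambda_{\alpha,2}$ (so that the eigenvalue $\psi_{d_\alpha}=\lambda_{\alpha,2}$ fails to be smooth). The cleanest way around this is to restrict attention to the open dense locus where the chosen polygon is \emph{generic}: all diagonals $d_\alpha(\bx)$ have positive length and the two eigenvalues of each $\mu_{U_\alpha}$ are distinct, so every $\psi_{d_\alpha}$ is smooth, and $2\br$ is a regular value mapping to a smooth point of $\pol$. On this locus the decoupling in step~(2) is honest, the eigenvalue functions are real-analytic, and the Kapovich--Millson result supplies the independence on the base. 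Since the non-vanishing of the full Jacobian is an analytic condition that holds on this nonempty open set, it holds on an open dense subset of $\Gr(2,n)$, which is all that functional independence requires.
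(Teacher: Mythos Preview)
Your overall strategy matches the paper's: use the functional independence of the bending Hamiltonians on the reduced space together with the independence of the torus moment map components. The paper phrases this in terms of Hamiltonian vector fields rather than differentials, which is the symplectic dual of what you do; either framing works.

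However, your step~(2) contains a genuine error. You claim that the two families decouple because ``$d\psi_{e_i}$ are nonzero precisely on those orbit directions.'' This is false: for an abelian Hamiltonian action the moment map components Poisson-commute, so $d\psi_{e_i}(\xi_{\psi_{e_j}}) = \{\psi_{e_j},\psi_{e_i}\} = 0$, meaning $d\psi_{e_i}$ \emph{vanishes} on the $T_{U(n)}$-orbit directions, just like $d\psi_{d_\alpha}$ does. So both families of differentials lie in the annihilator of the orbit tangent space, and your stated mechanism for separating them does not work.

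The correct decoupling (in your differential language) is to restrict to the level set $\mu_{T_{U(n)}}^{-1}(2\br)$: there the $d\psi_{e_i}$ vanish identically, so any linear relation $\sum a_\alpha\, d\psi_{d_\alpha} + \sum b_i\, d\psi_{e_i} = 0$ forces $\sum a_\alpha\, d\psi_{d_\alpha}$ to vanish on the level set, hence on the quotient $\pol$, whence $a_\alpha = 0$ by your step~(3); then $b_i = 0$ by your step~(1). Equivalently---and this is how the paper does it---pass to Hamiltonian vector fields: the $\xi_{\psi_{e_i}}$ span the orbit tangent, while the $\xi_{\psi_{d_\alpha}}$ descend to the independent Hamiltonian vector fields of the bending system on $\pol$ and are therefore linearly independent modulo the orbit tangent. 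This is the splitting you want, and with this correction your argument goes through.
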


\begin{proof}
For a function $f$, let $\xi_f$ denote its Hamiltonian vector field.
From Proposition \ref{prop:reduction_CIS} and the fact that 
the bending Hamiltonians are functionally independent, 
$\xi_{\psi_{d_1}}, \dots, \xi_{\psi_{d_{n-3}}}$
are linearly independent and transverse to $T_{U(n)}$-orbits
on an open dense subset of a level set $\mu_{T_{U(n)}}^{-1}(\bld{r}) \subset \Gr(2,n)$ 
for generic $\bld{r}$. 
On the other hand, 
$\xi_{\psi_{e_1}}, \dots, \xi_{\psi_{e_{n-1}}}$ give basis of 
tangent spaces of $T_{U(n)}$-orbits, and thus 
$\{ \xi_{\psi_{d_{\alpha}}}, \xi_{\psi_{e_i}} \}_{\alpha, i}$ 
are linearly independent on an open dense subset.
\end{proof}

\begin{rem}
  From the facts that the bending Hamiltonians $\varphi_{\alpha}$ are 
  action variables and  $\psi_{e_i}$ are moment maps of 
  $S^1$-actions,
  the functions $(\psi_{d_{\alpha}}, \psi_{e_i})_{\alpha, i}$ are also
  action variables.
\end{rem}

Recall that lengths of sides and diagonals
of the polygon are given by 
$
 r_i = \psi_{e_i}/2
$
and
$
 \varphi_{d_{\alpha}}
 = \sum_{i \in I_{\alpha}} \psi_{e_i}/2 - \psi_{d_{\alpha}}.
$
Let $(u_{e_1}, \dots, u_{e_{n-1}}, u_{d_1}, \dots, u_{d_{n-3}})$ be the coordinates
on $\mathbb{R}^{\N}$ corresponding to $\psi_{e_i}$ and $\psi_{d_{\alpha}}$, and 
define another coordinates corresponding to length functions for 
$a \in \{e_1, \dots, e_n, d_1, \dots, d_{n-3} \}$ by
\begin{equation}
  u(a) = \begin{cases}
    \frac 12 u_{e_i}, & a = e_i \quad (i=1, \dots, n-1),\\
    |\bld{r}| - \frac 12 \sum_{i=1}^{n-1} u_{e_i}, & a = e_n,\\
    -u_{d_{\alpha}} + \frac 12 \sum_{i \in I_{\alpha}} u_{e_i},
    & a = d_{\alpha}.
  \end{cases}
  \label{eq:length_fn}
\end{equation}

\begin{theorem}
Let  $\Delta_{\Gamma}$ be the polytope in $\bR^\N$
defined by triangle inequalities
\begin{equation} \label{eq:triangle}
  | u(a) - u(b)| \le u(c) \le u(a) + u(b)
\end{equation}
for each triangle in $\Gamma$ with sides $a, b, c$.
Then $\Delta_\Gamma$ is the moment polytope
of the integrable system$;$
$
 \Delta_\Gamma = \Psi_{\Gamma}(\Gr(2,n)).
$
\end{theorem}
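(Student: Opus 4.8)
The plan is to show the two inclusions $\Psi_\Gamma(\Gr(2,n)) \subseteq \Delta_\Gamma$ and $\Delta_\Gamma \subseteq \Psi_\Gamma(\Gr(2,n))$ separately, translating everything into the length coordinates $u(a)$ of \eqref{eq:length_fn}. The key observation is that under this change of coordinates the function $u(a)$ is precisely the length $\varphi_a$ of the edge $a$ of the spatial polygon represented by a point of $\Gr(2,n)$: for a side $e_i$ this is $r_i = \psi_{e_i}/2$, and for a diagonal $d_\alpha$ it is $\varphi_{d_\alpha} = \tfrac 12 \sum_{i \in I_\alpha} \psi_{e_i} - \psi_{d_\alpha}$, exactly as recalled just before the statement and established in Proposition \ref{prop:reduction_CIS}. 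So the theorem reduces to the assertion that the image of $\Gr(2,n)$ under the collected length functions is cut out by triangle inequalities, one triple per triangle of $\Gamma$.

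First I would prove $\Psi_\Gamma(\Gr(2,n)) \subseteq \Delta_\Gamma$. Fix a point $[z,w] \in \Gr(2,n)$ in the level set $\mu_{T_{U(n)}}^{-1}(2\br)$, which by Proposition \ref{prop:symp_red} represents a closed spatial $n$-gon. Each triangle of the triangulation $\Gamma$ has sides $a,b,c$ that are either edges or diagonals of that polygon, and since the corresponding vectors in $\sqrt{-1}\su \cong \bR^3$ close up into an actual (possibly degenerate) triangle, their lengths satisfy the triangle inequality \eqref{eq:triangle}. Concretely, the relation $d_3 = d_1 + d_2$ among the defining vectors — the same additivity exploited in the trace computation $\lambda_{3,1}+\lambda_{3,2} = \lambda_{1,1}+\lambda_{1,2}+\lambda_{2,1}+\lambda_{2,2}$ preceding the statement — forces $|u(a) - u(b)| \le u(c) \le u(a)+u(b)$. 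This gives the forward inclusion for every triangle simultaneously.

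The harder direction is surjectivity onto $\Delta_\Gamma$, and this is where I expect the main obstacle. Given a point of $\Delta_\Gamma$, i.e.\ a prescribed set of edge and diagonal lengths satisfying all triangle inequalities, I must produce an actual spatial polygon realizing them. The natural strategy is induction on the triangulation: the inequalities are arranged so that the diagonals of $\Gamma$ can be ordered compatibly with the tree structure of its dual graph, and one builds the polygon triangle by triangle, at each stage using the triangle inequality for the newly added triangle to place the next vertex (there is a one-parameter rotational freedom about each diagonal, which is exactly the bending flow, so the construction has the expected dimension). The subtlety is that the $u(a)$ are not independent of the boundary side lengths $r_i$, so I must check that the realizability condition one gets from gluing triangles coincides with the full system \eqref{eq:triangle} and introduces no further hidden constraint; equivalently, that the map from polygon data to length data is onto the polytope. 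I would handle this by invoking the fact that $\Phi_\Gamma(\pol) = \Delta_\Gamma(\br)$ is already known to be the polytope defined by triangle inequalities on the diagonals (the bending system image, from Section \ref{sc:bending_system}), and then noting that $\Psi_\Gamma$ is the lift of $\Phi_\Gamma$ together with the torus moment map $\mu_{T_{U(n)}}$ whose image is the simplex $\{\psi_{e_i} \ge 0,\ \sum \psi_{e_i} = 2|\br|\}$; the product structure of the reduction $\pol \cong T_{U(n)}\backslash\!\backslash \Gr(2,n)$ then identifies $\Delta_\Gamma$ as the total space of the bending polytope fibered over this simplex, giving surjectivity.
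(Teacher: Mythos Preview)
Your proposal is correct and follows essentially the same idea as the paper, though you make it look harder than it is. The paper's proof is two sentences: it observes that $\Gr(2,n)/T_{U(n)}$ is the moduli space of closed $n$-gons with fixed perimeter $|\br|$ (this was set up in Section~\ref{sc:polygon_space}), and that any prescription of side and diagonal lengths satisfying the triangle inequalities is realized by some polygon --- one simply builds it triangle by triangle along the tree $\Gamma$, exactly your approach (a). The ``subtlety'' you flag does not arise: gluing a new Euclidean triangle along an already-constructed diagonal imposes precisely the three inequalities for that triangle and nothing more, so there are no hidden constraints to check. Your fallback approach (b) via the fibration over the $T_{U(n)}$-moment image is not needed; if you did pursue it, note that this image is the hypersimplex (convex hull of permutations of $(|\br|,|\br|,0,\dots,0)$), not a simplex, and you would have to treat boundary values of $\br$ separately since $\Phi_\Gamma(\pol)=\Delta_\Gamma(\br)$ is only stated for $\br$ satisfying \eqref{eq:non-empty}.
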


\begin{proof}
Note that there exists a polygon with 
prescribed side and diagonal lengths
if they satisfy triangle inequalities.
Since the quotient $\Gr(2,n)/T_{U(n)}$ can be regarded 
as a moduli space of polygons with fixed perimeter, 
the induced map
$\Psi_{\Gamma} : \Gr(2,n)/T_{U(n)} \to \Delta_{\Gamma}$ 
is surjective, which proves the theorem.
\end{proof}

The image $\Delta_{\Gamma}(\bld{r})$ of the polygon
space $\pol$ is a subset of $\Delta_{\Gamma}$ defined by
$(u(e_1), \dots, u(e_n)) = \br$.
Furthermore we have the following:

\begin{prop} \label{pr:piecewise-linear}
Let $\Gamma_1$ and  $\Gamma_2$ be two triangulations which are 
transformed into each other by a Whitehead move replacing $d$ with $d'$.
Then $\Delta_{\Gamma_1}$ is transformed into $\Delta_{\Gamma_2}$ 
by a piecewise-linear transformations 
$(\ref{eq:PL_transf})$ with respect to $(u(e_i), u(d_{\alpha}))_{i, \alpha}$.
This map is defined over $\mathbb{Z}$ with respect to the coordinates
$(u_{e_i}, u_{d_{\alpha}})_{i, \alpha}$ if $|\bld{r}| \in \mathbb{Z}$.
Hence the volume and the number of integral points in the case where
$\Delta_{\Gamma}$ is integral are independent of the choice of $\Gamma$.
\end{prop}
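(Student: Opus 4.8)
The plan is to reduce Proposition \ref{pr:piecewise-linear} to the already-proved statement about the polygon-space polytopes $\Delta_{\Gamma_i}(\bld r)$, by understanding $\Delta_\Gamma$ as a family of such polytopes parametrized by the edge-length data. Recall that $\Delta_\Gamma \subset \bR^\N$ lives in the coordinates $(u_{e_i}, u_{d_\alpha})$, which via \eqref{eq:length_fn} translate into length coordinates $u(a)$ for every side and diagonal $a$, and that $\Delta_\Gamma$ is cut out by the triangle inequalities \eqref{eq:triangle}. The key observation is that the $n$ side-length coordinates $u(e_1),\dots,u(e_n)$ are common to $\Gamma_1$ and $\Gamma_2$ (a Whitehead move changes only one diagonal, replacing $d$ by $d'$ while leaving all $e_i$ and all other diagonals fixed), and that fixing these coordinates to a value $\bld r$ recovers exactly the bending-system polytope $\Delta_{\Gamma_i}(\bld r)$ as a slice.

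\emph{First} I would make the slicing precise: the projection $\bR^\N \to \bR^n$ sending a point to $(u(e_1),\dots,u(e_n))$ is the same for both triangulations, and its fiber over $\bld r$ inside $\Delta_{\Gamma_i}$ is $\Delta_{\Gamma_i}(\bld r)$. \emph{Next}, on each such slice the proof of the Proposition preceding Lemma \ref{lemma} supplies a piecewise-linear bijection $\Delta_{\Gamma_1}(\bld r)\to\Delta_{\Gamma_2}(\bld r)$, given in the length coordinates by the formula \eqref{eq:PL_transf1}, which affects only the coordinate $u(d)$ (replacing it by $u(d')$) and leaves the remaining diagonal coordinates untouched. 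Since \eqref{eq:PL_transf1} is exactly the map $T_{\Gamma_1,\Gamma_2}$ of \eqref{eq:PL_transf} written in the $u(a)$-variables, and since it is manifestly independent of which slice $\bld r$ we are on (the side lengths enter only as the values $u(c)=r_j$ of the boundary coordinates, but the formula for $u(d')$ in terms of $u(a),u(b)$ is uniform), these slice-wise maps assemble into a single piecewise-linear automorphism of $\bR^\N$ carrying $\Delta_{\Gamma_1}$ onto $\Delta_{\Gamma_2}$.

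\emph{To finish}, I would record the integrality statement. The change of variables \eqref{eq:length_fn} between $(u_{e_i},u_{d_\alpha})$ and the $u(a)$ is affine with rational coefficients involving $|\bld r|$, so if $|\bld r|\in\bZ$ then this change of variables is defined over $\bZ$; conjugating the integral PL map \eqref{eq:PL_transf} (which is built from $\min$, addition and subtraction, hence integral) by an integral affine change of coordinates yields a map defined over $\bZ$ in the $(u_{e_i},u_{d_\alpha})$ coordinates. Area/volume preservation and preservation of the number of lattice points then follow because a bijective integral PL map is volume-preserving on each linearity domain and sends lattice points to lattice points bijectively, exactly as in the slice-wise argument.

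\emph{The main obstacle} I anticipate is purely bookkeeping rather than conceptual: one must check that the slice-wise maps are genuinely the restrictions of one global PL map on $\bR^\N$, i.e. that the transition loci (where $\min(u_1+u_2,u_3+u_4)$ switches branches) depend only on the diagonal coordinates $u(a)=u_1,\dots,u_4$ entering the local quadrilateral and not on the ambient slice. This is immediate from the fact that \eqref{eq:PL_transf} involves only the four coordinates adjacent to the flipped diagonal, so the verification amounts to confirming that a Whitehead move is a purely local modification of the triangulation—something already built into the setup via Remark \ref{rem:subgroups} and the description of the move in Figure \ref{fig:whitehead1}. The remaining work is to translate the $\min$-formula between the two coordinate systems, which is routine given \eqref{eq:length_fn}.
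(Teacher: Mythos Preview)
Your reduction to the polygon-space slices for the bijection $\Delta_{\Gamma_1}\to\Delta_{\Gamma_2}$ is sound (and in fact a bit more elaborate than necessary: since both polytopes are cut out by triangle inequalities that agree except for the two triangles containing $d$ or $d'$, the earlier slice-wise argument applies verbatim). The problem lies in your integrality paragraph.

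You claim that ``if $|\bld r|\in\bZ$ then this change of variables is defined over $\bZ$'', and then propose to conjugate the integral map \eqref{eq:PL_transf} by an integral affine transformation. But look at \eqref{eq:length_fn}: it reads $u(e_i)=\tfrac12 u_{e_i}$ and $u(d_\alpha)=-u_{d_\alpha}+\tfrac12\sum_{i\in I_\alpha}u_{e_i}$. The coefficients $\tfrac12$ are there regardless of $|\bld r|$; the only place $|\bld r|$ enters is as the constant term in $u(e_n)$. So the change of variables is \emph{not} integral, and conjugating an integral PL map by a half-integral affine map need not give an integral map. This is exactly the point: the paper states explicitly that ``the transformation \eqref{eq:length_fn} is not defined over $\mathbb Z$'', and the entire content of its proof is an algebraic computation showing that the half-integer contributions cancel. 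Concretely, one introduces $v(a)=\sum_{j\in I_a}u_{e_j}/2$, rewrites each $\min$ term so that the non-integral pieces $v(a_k)$ can be pulled outside, and observes that what remains is $u(d')+v(d')=u(d)+v(d)+g(u)$ with $g$ integral; since $u(a)+v(a)$ and $u(a)-v(a)$ have integral coefficients in the $(u_{e_i},u_{d_\alpha})$ variables, integrality follows. Your proposal skips this computation by asserting something false, so the argument as written has a genuine gap at precisely the step the proposition is designed to address.
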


\begin{proof}
It is obvious that the piecewise linear map given by (\ref{eq:PL_transf})
sends $\Delta_{\Gamma_1}$ to $\Delta_{\Gamma_2}$ preserving the volumes.
We need to show that this map is defined over $\mathbb{Z}$,
since the transformation (\ref{eq:length_fn}) is not defined 
over $\mathbb{Z}$.

Let $a_1, \dots, a_4 \in \{e_1, \dots, e_n, d_1, \dots, d_{n-3} \}$
be the sides of the quadrilateral having $d$ and $d'$ as its diagonals,
and take $I_{a_1}, \dots, I_{a_4} \subset \{1, \dots, n\}$ such that 
$\sum_{i \in I_{a_k}} e_i = \pm a_k$ and 
$I_{a_1} \sqcup \dots \sqcup I_{a_4} = \{1, \dots, n \}$.
We set $I_d = I_{a_1} \cup I_{a_2}$ and 
$I_{d'} = I_{a_1} \cup I_{a_4}$ so that
\[
  \sum_{i \in I_d} e_i = \pm d, \quad
  \sum_{i \in I_{d'}} e_i = \pm d'.
\]
Define $v(a) = \sum_{j \in I_a} u_{e_j}/2 = \sum_{i \in I_a} u(e_i)$ 
for each $a = a_1, \dots, a_4, d, d'$. 
Then $u(a) \pm v(a)$ have integral coefficients 
with respect to $u_{e_i}$ and $u_{d_{\alpha}}$.
We also note that 
\[
  v(a_1) + \dots + v(a_4) = \sum_{i=1}^n u(e_i) = |\bld{r}|.
\]
Then we have
\begin{multline*}
  \min (u(a_1)+u(a_2), u(a_3)+u(a_4))\\
  = \min \bigl( u(a_1)+v(a_1)+u(a_2)+v(a_2), 
                 u(a_3)-v(a_3)+u(a_4)-v(a_4) + |\bld{r}| \bigl)\\
  - v(a_1) - v(a_2), 
\end{multline*}
and the first term in the right hand side is defined over $\mathbb{Z}$.
Similarly we have
\begin{multline*}
  \min (u(a_1)+u(a_4), u(a_2)+u(a_3))\\
  = \min \bigl( u(a_1)+v(a_1)+u(a_4)+v(a_4), 
                 u(a_2)-v(a_2)+u(a_3)-v(a_3) + |\bld{r}| \bigl)\\
  - v(a_1) - v(a_4).
\end{multline*}
Thus (\ref{eq:PL_transf}) can be written as
\[
  u(d') + v(a_1) + v(a_4) =
  u(d) + v(a_1) + v(a_2) + g(u)
\]
for some $g(u) \in \mathbb{Z}[u_{e_i}, u_{d_{\alpha}}]$.
Since $v(a_1) + v(a_2) = v(d)$ and  $v(a_1) + v(a_4) = v(d')$,
the above coordinate change is defied over $\mathbb{Z}$.
\end{proof}

\begin{expl}[cf.~Hausmann and Knutson \cite{Hausmann-Knutson_PSG}]
Suppose that $\Gamma$ is the caterpillar given by
$d_{\alpha} = e_1 + e_2 + \dots + e_{\alpha +1}$, $\alpha = 1, \dots, n-3$.
Then the subgroups $U_{d_{\alpha}} \cong U(\alpha+1)$ satisfy
\[
  U_{d_{n-3}} \supset U_{d_{n-4}} \supset \dots \supset U_{d_1}
  \supset S^1_{e_1}.
\]
The first and the second eigenvalues
of the moment map $\mu_{U_{\alpha}}$ is given by
\begin{equation}
  \lambda^{(\alpha+1)}_1 := \lambda_{\alpha, 1} 
  = - \psi_{d_{\alpha}} + \sum_{i=1}^{\alpha+1} \psi_{e_i},
  \quad
  \lambda^{(\alpha+1)}_2 := \lambda_{\alpha, 2} = \psi_{d_{\alpha}}.
  \label{eq:coord_GC}
\end{equation}
We also consider the action of $U_{-e_n} \cong U(n-1)$ corresponding to
$I_{-e_n} = \{ 1, 2, \dots, n-1 \}$.
The eigenvalues of the corresponding moment map $\mu_{U_{-e_n}}$
gives functions of eigenvalues 
$\lambda^{(n-1)}_1 \ge \lambda^{(n-1)}_2 \ge 0$.
Since $\lambda^{(n-1)}_1 = |\bld{r}|$ is constant and
\[
  \lambda^{(n-1)}_2 = \sum_{i=1}^{n-1} \psi_{e_i} - |\bld{r}|,
\] 
$\Psi_{\Gamma}$ is equivalent to the Gelfand-Cetlin system
$(\lambda_j^{(k)})_{j,k} : \Gr(2,n) \to \bR^\N$,
where we set $\lambda^{(1)}_1 = \psi_{e_1}$.
It is easy to check that the triangle inequalities are equivalent to
the inequalities for Gelfand-Cetlin patterns
\begin{equation}
\begin{alignedat}{17}
   {\hbox to1.65em{$| \bld{r}|$}} &&&& \llmd {n-1}2 \\
  & \uge && \dge && \uge \\
  && \llmd {n-2}1 &&&& \llmd{n-2}2 \\
  &&& \uge && \dge && \uge \\
  &&&& \dndots &&&& \dndots  \\
  &&&&& \uge && \dge && \uge \\
  &&&&&& \llmd 31 &&&& \llmd 32 &&&& {\hbox to1.65em{$\,\, 0$}} \\
  &&&&&&& \uge && \dge && \uge && \dge \\
  &&&&&&&& \llmd 21 &&&& \llmd 22 \\
  &&&&&&&&& \uge && \dge &&&&&&& \\
  &&&&&&&&&& \llmd 11 &&&&&&&&& 
\end{alignedat}
\label{GC-pattern}
\end{equation}
\end{expl}

\section{Degenerations of Grassmannians in stages}
 \label{sc:deg_in_stages}

Recall that the Pl\"ucker embedding 
$\Gr(2,n) \hookrightarrow \mathbb{P}(\bigwedge^2 \mathbb{C}^n)$
is given by
\[
  \begin{bmatrix}
    z_1 & w_1\\
    \vdots & \vdots \\
    z_n & w_n
  \end{bmatrix}
  \longmapsto 
  [ Z_{ij}; 1 \le i<j \le n],
  \quad
  Z_{ij} = \det \begin{pmatrix}
                 z_i & w_i \\
                 z_j & w_j
                \end{pmatrix},
\]
and the image is defined by the Pl\"ucker relations
\[
  p_{ijkl} (Z) = Z_{ij}Z_{kl} - Z_{ik}Z_{jl} + Z_{il}Z_{jk} = 0,
  \quad
  i<j<k<l.
\]
Toric degenerations of $\Gr(2,n)$ are given by deforming 
the Pl\"ucker relations into binomials.
Speyer and Sturmfels \cite{Speyer-Sturmfels_TG}
proved that each toric degeneration of $\Gr(2,n)$
corresponds to a triangulation of the reference polygon $P$.
In this section we construct a multi-parameter deformation of $\Gr(2,n)$ 
which is an extension of the one-parameter family
in \cite{Speyer-Sturmfels_TG}.

Fix a triangulation of the reference $n$-gon $P$, and let $\Gamma$ be
its dual graph.
We choose a numbering and orientations of the diagonals 
$d_{\alpha} = \sum_{i \in I_{\alpha}} e_i$ 
in such a way that either $I_{\alpha} \supset I_{\beta}$
or $I_{\alpha} \cap I_{\beta} = \emptyset$ is satisfied if $\alpha < \beta$.
In particular,
we assume that $|I_1|=n-2$ and
$I_{\alpha} \subset I_1$ for all $\alpha \ge 2$. 
For two leaves $i$, $j$ of $\Gamma$, let $\gamma (i,j)$ be the unique path in $\Gamma$
connecting $i$ and $j$ (see Figure \ref{fig:graphpath}).
\begin{figure}[h]
  \begin{center}
    \includegraphics*[bb= 0 0 98 86]{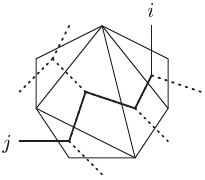}
  \end{center}
  \caption{A path $\gamma(i,j)$ connecting the $i$-th and $j$-th leaves}
  \label{fig:graphpath}
\end{figure}
We introduce a parameter $t_{\alpha}$ for each diagonal $d_{\alpha}$.
We define a weight 
$\bld{w}_{ij}^{\Gamma} = (w_{ij,1}^{\Gamma},\dots, w_{ij,n-3}^{\Gamma})$
of $Z_{ij}$ by
\[
  w_{ij,\alpha}^{\Gamma} = 
    \begin{cases}
      \frac 12 & \text{if $\gamma(i,j)$ intersects $d_{\alpha}$},\\
      0 & \text{otherwise},
    \end{cases}
\]
and set
\[
  \bld{t}^{\bld{w}_{ij}^{\Gamma}} 
  = \prod_{\alpha=1}^{n-3} t_{\alpha}^{w_{ij,\alpha}^{\Gamma}}
  = \prod_{\alpha} \sqrt{t_{\alpha}},
\]
where the product on the right hand side is taken over all 
$\alpha \in \{1, \dots, n-3 \}$
such that $d_{\alpha}$ crosses $\gamma (i,j)$.
For a polynomial $p(Z_{ij})$, we define
$w^{\Gamma}_{\alpha}(p)$ to be the maximum of weights
of monomials in $p$ with respect to $t_{\alpha}$, and set
\[
  \widetilde{p}(Z_{ij}, t_{\alpha}) =
  \prod_{\alpha} t_{\alpha}^{w^{\Gamma}_{\alpha}(p)} 
  \cdot p(\bld{t}^{-\bld{w}_{ij}^{\Gamma}}Z_{ij}).
\]
Then $\widetilde{p}$ is a polynomial in $Z_{ij}$ and $t_{\alpha}$ for each 
Pl\"ucker relation $p=p_{ijkl}$.
The degeneration $f_{\Gamma} : \mathfrak{X}_{\Gamma} \to \bC^{n-3}$
of $\Gr(2,n)$ corresponding to $\Gamma$ is given by
\[
   \mathfrak{X}_{\Gamma} = 
  \Bigl\{ (Z, \bld{t}) \in \mathbb{P}(
  \textstyle{\bigwedge^2} \mathbb{C}^n) \times 
  \mathbb{C}^{n-3} \, \Bigm| \,  \widetilde{p}_{ijkl} (Z, \bld{t}) = 0 ,
  \, i<j<k<l
  \Bigr\} .
\]
The restriction $\widetilde{p}_{ijkl}(Z, 0)$ to $t_1 = \dots = t_{n-3} = 0$ is
a binomial, and hence
the central fiber 
$X_{\Gamma, 0} = f_{\Gamma}^{-1}(0, \dots, 0)$ is a toric variety
(see \cite[Section 4]{Speyer-Sturmfels_TG}
or \cite[Section 4]{Howard-Manon-Millson}).
We will see in the next section that
$\Delta_{\Gamma} = \Psi_{\Gamma}(\Gr(2,n))$
is the moment polytope of $X_{\Gamma, 0}$.

\begin{rem}
The restriction of $\mathfrak{X}_{\Gamma}$ to the diagonal
$t_1 = \dots = t_{n-3} = t$ is the family constructed
in \cite{Speyer-Sturmfels_TG,Howard-Manon-Millson}.
\end{rem}

\begin{expl}
Let $n=5$ and consider the triangulation given by
$d_1 = e_1+e_2$ and $d_2 = e_1+e_2+e_3$.
Then the defining equations of $\mathfrak{X}_{\Gamma}$ are given by
\[
  \left\{
  \begin{aligned}
    t_1 Z_{12}Z_{34} - Z_{13}Z_{24} + Z_{14}Z_{23} &= 0,\\
    t_1 Z_{12}Z_{35} - Z_{13}Z_{25} + Z_{15}Z_{23} &= 0,\\
    t_1 t_2 Z_{12}Z_{45} - Z_{14}Z_{25} + Z_{15}Z_{24} &= 0,\\
    t_2 Z_{13}Z_{45} - Z_{14}Z_{35} + Z_{15}Z_{34} &= 0,\\
    t_2 Z_{23}Z_{45} - Z_{24}Z_{35} + Z_{25}Z_{34} &= 0.
  \end{aligned}
  \right.
\]
\end{expl}

To see the degeneration in more detail, 
we introduce the following notation.
For an $m$-gon $P'$ with sides labeled by $1, \dots, m$, 
we define 
\[
  \widetilde{\Gr}_{P'}
  = \mathbb{C}^{m \times 2} \GIT_0 SU(2)
\]
to be a cone over the Grassmannian
$$
 \Gr_{P'}
  = \bC^{m \times 2} \GIT_{|\br|} U(2),
$$
so that
$
 \Gr_{P'}
  = \widetilde{\Gr}_{P'} \GIT_{|\br|} S^1 \cong \Gr(2,m),
$
and write elements of 
$\Gr_{P'}$ or $\widetilde{\Gr}_{P'}$ as
\[
  \begin{bmatrix} 
     z_1^{P'} & w_1^{P'} \\ \vdots & \vdots \\ z_m^{P'} & w_m^{P'} 
  \end{bmatrix}.
\]
Let
$\widetilde{\Gr}_{P'} \hookrightarrow V_{P'} := \bigwedge^2 \mathbb{C}^m$
and $\Gr_{P'} \hookrightarrow \bP (V_{P'})$
be the Pl\"ucker embeddings given by
\[
  Z_{ij}^{P'} = \det 
    \begin{pmatrix}
      z_i^{P'} & w_i^{P'} \\ z_j^{P'} & w_j^{P'} 
    \end{pmatrix}.
\]
Note that if $P'$ is a triangle, then 
$\widetilde{\Gr}_{P'} = V_{P'} = \bigwedge^2 \bC^3 \cong \bC^3$ and 
$\Gr_{P'} \cong \bP^2$.
For a triangulation $\Gamma'$ of $P'$, let 
$\mathfrak{X}_{\Gamma'} \to \bC^{m-3}$ 
and $\widetilde{\mathfrak{X}}_{\Gamma'} \to \bC^{m-3}$ denote the 
corresponding toric degenerations of $\Gr_{P'}$ and 
$\widetilde{\Gr}_{P'}$, respectively.

We fix a diagonal $d = d_{\alpha}$ in $\Gamma$ and consider a one-parameter subfamily 
$f'_{\Gamma} : \frakX'_{\Gamma} \to \mathbb{C}$
of $f_{\Gamma} : \frakX_{\Gamma} \to \mathbb{C}^{n-3}$ defined by
$t_{\beta}=1$ for all $\beta \ne \alpha$.
Suppose that the diagonal $d$ connects the $p$-th vertex
and the $q$-th vertex ($p<q$), 
and set
\begin{align*}
  I_+ &= \{ p+1, p+2, \dots, q \}, \\
  I_- &= \{ 1, \dots, p, q+1, \dots, n\} = \{1, \dots, n \} \setminus I_+.
\end{align*}
Then $d_+ = \sum_{i \in I_+} e_i$ or  $d_- = \sum_{i \in I_-} e_i = -d_+$
coincides with $d$, and $U_{d_+} \cap U_{d_-} = \{1\}$ in $U(n)$.
The defining equations 
of $\frakX'_{\Gamma}$ are 
\begin{equation}
  \begin{split}
    tZ_{ij}Z_{kl} - Z_{ik}Z_{jl} + Z_{il}Z_{jk} = 0, 
      & \quad \text{$i,j \in I_{\pm}$ and $k,l \in I_{\mp}$},
    \\
    Z_{ij}Z_{kl} - Z_{ik}Z_{jl} + tZ_{il}Z_{jk} = 0, 
      & \quad \text{$i, l \in I_-$ and $j,k \in I_+$},
    \\
    Z_{ij}Z_{kl} - Z_{ik}Z_{jl} + Z_{il}Z_{jk} = 0, 
      & \quad \text{otherwise}. 
  \end{split}
  \label{eq:eq_X'_Gamma}
\end{equation}
Then it is easy to see the following.

\begin{lem}
  The family $\mathfrak{X}'_{\Gamma} \to \bC$ is 
  $(U_{d_+} \times U_{d_-})$-invariant.
  In particular, $\mathfrak{X}'_{\Gamma}$ admits 
  a natural $U_{d_{\beta}}$-action 
  if $U_{d_{\beta}} \subset U_{d_+} \times U_{d_-}$.
\end{lem}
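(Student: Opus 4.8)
The plan is to verify $(U_{d_+}\times U_{d_-})$-invariance of the family $\frakX'_\Gamma \to \bC$ by checking that the natural action of this group on $\bP(\bigwedge^2\bC^n)\times\bC$ preserves each of the three types of defining equations in \eqref{eq:eq_X'_Gamma}. The key observation is that the group $U_{d_+}\times U_{d_-}$ acts on $\bC^{n\times 2}$ as a block-diagonal subgroup of $U(n)$: the factor $U_{d_+}$ mixes the rows indexed by $I_+$ among themselves, while $U_{d_-}$ mixes the rows indexed by $I_-$, and the two blocks do not interact. First I would recall how this action descends to the Pl\"ucker coordinates $Z_{ij}$. For $g=(g_+,g_-)$, the transformed matrix has rows that are $U(|I_+|)$-combinations (resp.\ $U(|I_-|)$-combinations) of the original rows within each index block, so $Z_{ij}=\det\begin{pmatrix} z_i & w_i \\ z_j & w_j\end{pmatrix}$ transforms linearly, and crucially the three index-type strata $\{i,j\in I_+\}$, $\{i,j\in I_-\}$, and $\{i\in I_+, j\in I_-\}$ are each preserved up to the appropriate determinant factor.

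\emph{Key steps.} The heart of the matter is that the $U(2)$-valued determinantal factors act by an overall scalar on each stratum. Concretely, when both indices lie in $I_+$, the coordinate $Z_{ij}$ picks up the factor $\det g_+$; when both lie in $I_-$, it picks up $\det g_-$; and when $i\in I_+$, $j\in I_-$ (a ``crossing'' coordinate, i.e.\ $\gamma(i,j)$ crosses $d$), it picks up $(\det g_+)^{1/2}(\det g_-)^{1/2}$ in an appropriate sense after passing to $\widetilde{\Gr}$, so that on the projective Grassmannian the phases cancel. I would then feed this into each equation type. For the pure equation of the third kind (all four indices on the same side of $d$), every $Z$ scales by the same determinant factor, so the binomial relation is manifestly preserved. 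For the two $t$-deformed equations, I would check that the monomials $Z_{ij}Z_{kl}$, $Z_{ik}Z_{jl}$, $Z_{il}Z_{jk}$ appearing in a given equation \emph{all} carry the same total determinant weight, so that the equation transforms into a scalar multiple of itself and the vanishing locus in $\frakX'_\Gamma$ is invariant for each fixed $t$. Since $t$ is untouched by the group action, invariance holds fiberwise, giving $(U_{d_+}\times U_{d_-})$-invariance of the whole family. The second sentence of the lemma is then immediate: if $U_{d_\beta}\subset U_{d_+}\times U_{d_-}$, it inherits an action by restriction.

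\emph{Main obstacle.} The step I expect to require the most care is the weight-balancing bookkeeping for the two deformed equations. One must confirm that in each Pl\"ucker relation $tZ_{ij}Z_{kl}-Z_{ik}Z_{jl}+Z_{il}Z_{jk}$ the three quadratic monomials have matching total determinant weight under $U_{d_+}\times U_{d_-}$; this is precisely the compatibility that was built into the definition of the weights $\bld{w}_{ij}^\Gamma$ and the tilde construction $\widetilde{p}$, but it must be read off correctly from the index combinatorics for the cases $i,j\in I_\pm,\ k,l\in I_\mp$ and $i,l\in I_-,\ j,k\in I_+$. The cleanest way is to work one level up on the cone $\widetilde{\Gr}$, where the determinant factors become honest scalars rather than projective phases, and to note that the number of crossings of $d$ by the paths $\gamma(i,k)$, $\gamma(j,l)$, etc., is constant across the three monomials of a fixed relation. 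Once this invariance of the crossing count is verified, the determinant weights cancel automatically and the invariance follows without further computation.
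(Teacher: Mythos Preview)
Your approach has a genuine gap: you treat the action of $U_{d_+}\times U_{d_-}$ on the Pl\"ucker coordinates as if it were a torus action by determinant characters, claiming that ``when both indices lie in $I_+$, the coordinate $Z_{ij}$ picks up the factor $\det g_+$.'' This is only correct for the \emph{center} of $U_{d_+}$. For a general $g_+\in U(|I_+|)$, the coordinates $\{Z_{ij}\}_{i,j\in I_+}$ transform via the representation $\bigwedge^2 g_+$, which mixes them nontrivially; likewise a mixed coordinate $Z_{ij}$ with $i\in I_+$, $j\in I_-$ is sent to a linear combination of all $Z_{i'j'}$ with $i'\in I_+$, $j'\in I_-$. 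Consequently an individual deformed Pl\"ucker relation $\widetilde p_{ijkl}$ is \emph{not} sent to a scalar multiple of itself, and your weight-balancing argument does not establish invariance of the family.

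What is actually needed is that the \emph{span} of the defining equations is a $(U_{d_+}\times U_{d_-})$-subrepresentation inside the quadratic forms on $\bigwedge^2\bC^n$. The argument sketched in the paper's source organises the relations by $m=\#(\{i,j,k,l\}\cap I_+)$ into spaces $W_m$ and identifies $W_m\cong\bigwedge^m\bC^{|I_+|}\otimes\bigwedge^{4-m}\bC^{|I_-|}$ as a $(U_{d_+}\times U_{d_-})$-module. The crucial case is $m=2$, where both $t$-deformed types live; after relabelling so that $a,b\in I_+$ and $c,d\in I_-$, every such relation takes the uniform shape
\[
  tZ_{ab}Z_{cd}-\det\begin{pmatrix}Z_{ac}&Z_{ad}\\ Z_{bc}&Z_{bd}\end{pmatrix}=0,
\]
and both the $t$-term and the determinant term transform via $\bigwedge^2\bC^{|I_+|}\otimes\bigwedge^2\bC^{|I_-|}$, so $W_2$ is stable for every fixed $t$. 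For $m\neq 2$ the relations are undeformed Pl\"ucker relations and stability follows from the $U(n)$-invariance of $\Gr(2,n)$. Your instinct that the index combinatorics (the crossing count with $d$) governs the answer is correct, but it manifests as a matching of \emph{representation types} of the three monomials, not as a matching of scalar weights.
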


We study the central fiber of $f'_{\Gamma}: \frakX'_{\Gamma} \to \bC$.
Let $P = P_+ \cup_d P_-$ be the subdivision of the reference polygon
by the diagonal $d$, where $P_{\pm}$ is a polygon whose sides are
$e_i$ ($i \in I_{\pm}$) and $d=d_{\alpha}$.
Each $P_{\pm}$ has a triangulation $\Gamma_{\pm}$ 
induced from $\Gamma$
as in Figure \ref{fg:subdivision}.
\begin{figure}
  \begin{center}
    \includegraphics*[bb= 0 0 119 91]{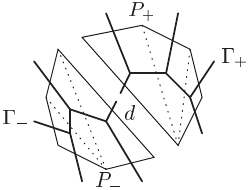}
  \end{center}
  \caption{A subdivision of a polygon and the induced trees}
  \label{fg:subdivision}
\end{figure}
We write elements of $\widetilde{\Gr}_{P_+}$ as
\[
  \begin{bmatrix}
    z_{i}^{P_+} & w_{i}^{P_+} \\
    z_{\alpha}^{P_+} & w_{\alpha}^{P_+}
  \end{bmatrix}_{i \in I_+}
  = 
  \begin{bmatrix}
    z_{p+1}^{P_+} & w_{p+1}^{P_+} \\ \vdots & \vdots \\
    z_{q}^{P_+} & w_{q}^{P_+} \\ z_{\alpha}^{P_+} & w_{\alpha}^{P_+}
  \end{bmatrix},
\]
and similarly for $\widetilde{\Gr}_{P_-}$.
We introduce two groups $S^1_0$ and $S^1_{d_{\alpha}}$
isomorphic to $S^1$,
and define an $S^1_0 \times S^1_{d_{\alpha}}$-action on
$\widetilde{\Gr}_{P_+} \times \widetilde{\Gr}_{P_-}$ by
\begin{equation}
\begin{split}
  &(s_0, s_{\alpha}) \cdot 
  \left( 
    \begin{bmatrix}
      z_{i}^{P_+} & w_{i}^{P_+} \\
      z_{\alpha}^{P_+} & w_{\alpha}^{P_+}
    \end{bmatrix}_{i \in I_+},
    \begin{bmatrix}
      z_{i}^{P_-} & w_{i}^{P_-} \\
      z_{\alpha}^{P_-} & w_{\alpha}^{P_-}
    \end{bmatrix}_{i \in I_-}
  \right) \\
  & \qquad \qquad =
  \left( 
    \begin{bmatrix}
      s_0 z_{i}^{P_+} & s_0 w_{i}^{P_+} \\
      s_{\alpha} z_{\alpha}^{P_+} & s_{\alpha}w_{\alpha}^{P_+}
    \end{bmatrix}_{i \in I_+},
    \begin{bmatrix}
      s_0 z_{i}^{P_-} & s_0 w_{i}^{P_-} \\
      s_{\alpha}^{-1} z_{\alpha}^{P_-} & s_{\alpha}^{-1} w_{\alpha}^{P_-}
    \end{bmatrix}_{i \in I_-}
  \right)
\end{split}
\label{eq:HMM-action}
\end{equation}
for $(s_0, s_{\alpha}) \in S^1_0 \times S^1_{d_{\alpha}}$.
Let $\bC^*_0 \times \bC^*_{d_{\alpha}}$ be the complexification of 
$S^1_0 \times S^1_{d_{\alpha}}$, which acts on 
$\widetilde{\Gr}_{P_+} \times \widetilde{\Gr}_{P_-}$ in an obvious way.

\begin{prop} \label{prop:central_fiber}
  The central fiber $X'_{\Gamma,0} = (f'_{\Gamma})^{-1}(0)$ 
  is isomorphic to the GIT quotient
  \[
    \widetilde{\Gr}_{P_+} \times \widetilde{\Gr}_{P_-}
    \GIT_{(2,0)} \bC^*_0 \times \bC^*_{d_{\alpha}},
  \]
  where the polarization is chosen in such a way that the weights of
  the actions of $\bC^*_0 \times \bC^*_{d_{\alpha}}$ is $(2,0)$.
  Moreover, the subfamily $\mathfrak{X}_{\Gamma}|_{t_{\alpha}=0}$ of
  $\mathfrak{X}_{\Gamma}$ 
  is induced from the degenerations $\mathfrak{X}_{\Gamma_{\pm}}$ of 
  $\Gr_{P_{\pm}}$ defined by $\Gamma_{\pm}:$
  \begin{equation}
    \mathfrak{X}_{\Gamma} |_{t_{\alpha}=0} \cong
    \widetilde{\mathfrak{X}}_{\Gamma_+} \times 
    \widetilde{\mathfrak{X}}_{\Gamma_-}
    \GIT_{(2,0)} \bC^*_0 \times \bC^*_{d_{\alpha}}.
    \label{eq:induced_degeneration}
  \end{equation}
\end{prop}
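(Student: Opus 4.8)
The plan is to construct an explicit gluing morphism
$\Grtilde_{P_+} \times \Grtilde_{P_-} \to \bP(\textstyle\bigwedge^2 \bC^n)$
that reconstructs the Pl\"ucker coordinates $Z_{ij}$ of $\Gr(2,n)$ from
those of the two smaller cones, and then to identify the GIT quotient
with the central fiber by comparing homogeneous coordinate rings. Writing
$Z^{P_+}_{ab}$ (resp. $Z^{P_-}_{ab}$) for the Pl\"ucker coordinates on
$\Grtilde_{P_+}$ with $a,b \in I_+ \cup \{\alpha\}$ (resp. on $\Grtilde_{P_-}$
with $a,b \in I_- \cup \{\alpha\}$), I would set
\[
  Z_{ij} = \begin{cases}
    Z^{P_+}_{ij}, & i,j \in I_+, \\
    Z^{P_-}_{ij}, & i,j \in I_-, \\
    \pm\, Z^{P_+}_{i\alpha}\, Z^{P_-}_{j\alpha}, & i \in I_+,\ j \in I_-,
  \end{cases}
\]
with the sign fixed by the chosen orientation of $d=d_\alpha$, and
symmetrically for $i \in I_-$, $j \in I_+$. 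Here the crossing coordinates
are exactly those $Z_{ij}$ for which the path $\gamma(i,j)$ meets $d_\alpha$.

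First I would check that this expression is semi-invariant for the action
\eqref{eq:HMM-action}. Under $(s_0,s_\alpha)$ each $Z^{P_\pm}_{ij}$ with both
indices in $I_\pm$ has weight $(2,0)$, while $Z^{P_+}_{i\alpha}$ and
$Z^{P_-}_{j\alpha}$ have weights $(1,1)$ and $(1,-1)$, so every $Z_{ij}$
above has weight $(2,0)$; the three families are precisely the degree-$(2,0)$
semi-invariants, and the morphism descends to the quotient
$\GIT_{(2,0)}(\bC^*_0 \times \bC^*_{d_\alpha})$. Next I would verify the
binomial equations \eqref{eq:eq_X'_Gamma} at $t=0$. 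For a relation of the
first type ($i,j \in I_+$, $k,l \in I_-$) the surviving monomials
$Z_{ik}Z_{jl}$ and $Z_{il}Z_{jk}$ both equal
$Z^{P_+}_{i\alpha}Z^{P_+}_{j\alpha}Z^{P_-}_{k\alpha}Z^{P_-}_{l\alpha}$ and
cancel; the second type is analogous. For a third-type relation the four
indices either lie together in $I_+$ or $I_-$, giving the Pl\"ucker relation
on the corresponding factor, or split three--one, in which case a common
crossing factor (say $Z^{P_-}_{l\alpha}$) factors out and leaves the Pl\"ucker
relation $p_{ijk\alpha}$ on $\Grtilde_{P_+}$. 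Thus the image lies in
$X'_{\Gamma,0}$.

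To prove the descended map is an isomorphism I would compare homogeneous
coordinate rings: that of $X'_{\Gamma,0}$ is $\bC[Z_{ij}]$ modulo the binomial
relations, while that of the GIT quotient is the invariant ring
$\bigoplus_m R_{m(2,0)}$ with $R = A_+ \otimes A_-$ the tensor product of the
two Pl\"ucker algebras, and the formula above gives a ring homomorphism
between them. A dimension count shows both sides have complex dimension
$(2|I_+|-1)+(2|I_-|-1)-2 = \N = \dim_{\bC}\Gr(2,n)$, so since the toric central
fiber is irreducible and normal, injectivity of the induced map (i.e. that
the binomial ideal is exactly the kernel) follows once the map is surjective.
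Surjectivity is the statement that the listed degree-$(2,0)$ semi-invariants
generate the whole invariant ring, a semigroup-generation fact for the two
Pl\"ucker algebras under the weights from \eqref{eq:HMM-action}; I expect this
to be the main obstacle. It can be settled by constructing the inverse
explicitly on the open locus where some crossing coordinate is nonzero --- there
one recovers $Z^{P_+}_{i\alpha} = Z_{ik}/Z^{P_-}_{k\alpha}$ for a fixed
$k \in I_-$, the scaling ambiguity being exactly $\bC^*_{d_\alpha}$ --- and then
checking the reconstruction extends across the semistable boundary, or
alternatively by invoking the toric description of the central fiber from
Speyer--Sturmfels and Howard--Manon--Millson.

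Finally, for the relative statement \eqref{eq:induced_degeneration} I would
run the same gluing with the remaining parameters switched on. The nesting
condition on the numbering forces every diagonal $d_\beta$ with $\beta \ne
\alpha$ to lie entirely in $P_+$ or in $P_-$, so the parameters $t_\beta$
partition into those of the induced triangulations $\Gamma_\pm$. Applying the
coordinatewise formula to the deformed coordinates $\widetilde{p}$ of
$\widetilde{\frakX}_{\Gamma_+}$ and $\widetilde{\frakX}_{\Gamma_-}$, and
checking that the weighted relations glue exactly as above (the factors of
$t_\beta$ on the two sides assembling into those of $\widetilde{p}_{ijkl}$ for
$\Gamma$), yields the isomorphism over $\bC^{n-4}$ specializing at
$t_\alpha=0$ to the first part.
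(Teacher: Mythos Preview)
Your approach is essentially the paper's: define the same gluing map $Z_{ij} \mapsto Z^{P_\pm}_{ij}$ or $Z^{P_+}_{i\alpha} Z^{P_-}_{j\alpha}$, check the weights, verify the degenerate Pl\"ucker relations, and identify coordinate rings. Two remarks are worth making.

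First, you overestimate the difficulty of surjectivity. The paper avoids your ``main obstacle'' by computing $\bC^*_{d_\alpha}$-invariants not in the Pl\"ucker algebras $A_\pm$ but in the ambient polynomial ring $\bC[Z^{P_+}_{ab}, Z^{P_-}_{ab}]$ of $V_{P_+} \times V_{P_-}$. There the invariant ring is generated by monomials of $\bC^*_{d_\alpha}$-weight zero, and one sees at once that these are generated by the $Z^{P_\pm}_{ij}$ (with $i,j \in I_\pm$) and the products $Z^{P_+}_{i\alpha} Z^{P_-}_{j\alpha}$; so the map $\bC[Z_{ij}] \to \bC[Z^{P_\pm}_{ab}]^{\bC^*_{d_\alpha}}$ is a surjection by construction. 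One then checks directly that the defining equations of the image in $\bigwedge^2 \bC^n$ --- the kernel of this surjection together with the images of the Pl\"ucker relations on each factor --- coincide with the binomial system \eqref{eq:eq_X'_Gamma} at $t=0$. This bypasses any dimension count or appeal to normality.

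Second, a couple of small slips. The variety $X'_{\Gamma,0}$ is not toric (only the final fiber $X_{\Gamma,0}$ is), so your phrase ``the toric central fiber is irreducible and normal'' is misplaced; if you want irreducibility, take it on the GIT-quotient side, where it is clear. And for the relative statement you do not need the nesting condition on the numbering of diagonals: the fact that every $d_\beta$ with $\beta \ne \alpha$ lies entirely in $P_+$ or $P_-$ is just the non-crossing condition for diagonals in the triangulation $\Gamma$.
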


\begin{proof}
First note from \eqref{eq:eq_X'_Gamma}
that the defining 
equations for $X'_{\Gamma,0}$ are
\begin{equation}
  \begin{split}
    - Z_{ik}Z_{jl} + Z_{il}Z_{jk} = 0, 
      & \quad \text{$i,j \in I_{\pm}$ and $k,l \in I_{\mp}$},
    \\
    Z_{ij}Z_{kl} - Z_{ik}Z_{jl}  = 0, 
      & \quad \text{$i, l \in I_-$ and $j,k \in I_+$},
    \\
    Z_{ij}Z_{kl} - Z_{ik}Z_{jl} + Z_{il}Z_{jk} = 0, 
      & \quad \text{otherwise}. 
  \end{split}
  \label{eq:central_fiber}
\end{equation}
On the other hand,
the $\bC^*_0 \times \bC^*_{d_{\alpha}}$-action 
on the Pl\"ucker coordinates on $V_{P_{\pm}}$ is given by
\[
  Z_{ij}^{P_{\pm}} \mapsto s_0^2 Z_{ij}^{P_{\pm}},
  \quad
  Z_{i \alpha}^{P_{\pm}} \mapsto 
  s_0 s_{\alpha}^{\pm 1} Z_{i \alpha}^{P_{\pm}}
  \quad (i, j \in I_{\pm}),
\]
so that the ring 
$
 \bC [Z_{ij}^{P_+}, Z_{i \alpha}^{P_+},
  Z_{ij}^{P_-}, Z_{i \alpha}^{P_-}]^{\bC^*_{d_{\alpha}}}
$
of $\bC^*_{d_{\alpha}}$-invariants is generated by
$Z_{ij}^{P_{\pm}}$ ($i,j \in I_{\pm}$) and 
$Z_{i \alpha}^{P_+} Z_{j \alpha}^{P_-}$ ($i \in I_+$, $j \in I_-$).
Hence the map
$
 \bC[Z_{ij}] \to
 \bC [Z_{ij}^{P_+}, Z_{i \alpha}^{P_+},
  Z_{ij}^{P_-}, Z_{i \alpha}^{P_-}]
$
given by
  \begin{equation}
    Z_{ij} = \begin{cases}
      Z_{ij}^{P_{\pm}}, & i, j \in I_{\pm},\\
      Z_{i \alpha}^{P_+} Z_{j \alpha}^{P_-} ,
        & \text{$i \in I_+$ and $j \in I_-$}
    \end{cases}
  \label{eq:GIT_quot1}
\end{equation}
is a surjection to the invariant ring
$
 \bC [Z_{ij}^{P_+}, Z_{i \alpha}^{P_+},
  Z_{ij}^{P_-}, Z_{i \alpha}^{P_-}]^{\bC^*_{d_{\alpha}}},
$
and defines an embedding
\begin{equation} \label{eq:embedding}
  \widetilde{\Gr}_{P_+} \times \widetilde{\Gr}_{P_-} 
  \GIT_0 \bC^*_{d_{\alpha}}
  \hookrightarrow \textstyle{\bigwedge^2} \bC^n,
\end{equation}
of the GIT quotient.
It is easy to see that
the defining equations for the image
of the embedding \eqref{eq:embedding}
coincides with \eqref{eq:central_fiber},
so that
$
 \widetilde{\Gr}_{P_+} \times \widetilde{\Gr}_{P_-} 
  \GIT_0 \bC^*_{d_{\alpha}}
$
is isomorphic to the cone
$\widetilde{X}'_{\Gamma, 0} \subset \bigwedge^2 \bC^n$
over $X'_{\Gamma,0}$.
Under this identification, the $\mathbb{C}^*$-action defining the projection
$\widetilde{X}'_{\Gamma,0} \to X'_{\Gamma,0}$
coincides with the $\bC^*_0$-action, and hence we have
\[
  X'_{\Gamma,0} \cong \bigl(
  \widetilde{\Gr}_{P_+} \times \widetilde{\Gr}_{P_-} 
  \GIT_{0} \bC^*_{d_{\alpha}} \bigr)
  \GIT_{2} \bC^*_0.
\]
It is easy to see that the map \eqref{eq:GIT_quot1} extends
to an isomorphism
\[
  \bigl( \widetilde{\mathfrak{X}}_{\Gamma_+} \times 
  \widetilde{\mathfrak{X}}_{\Gamma_-} 
  \GIT_0 \bC^*_{d_{\alpha}} \bigr)
  \GIT_{2} \bC^*_0
  \longrightarrow \mathfrak{X}_{\Gamma}|_{t_{\alpha = 0}},
\]
and Proposition \ref{prop:central_fiber} is proved.
\end{proof}

\begin{rem}
From the proof, there is a $\mathbb{C}^*_0$-action on
$\widetilde{X}'_{\Gamma,0} \cong \widetilde{\Gr}_{P_+} \times \widetilde{\Gr}_{P_-} 
  \GIT_{0} \bC^*_{d_{\alpha}}$
such that the coordinates $Z_{ij}$ have weights 1.
$\widetilde{X}'_{\Gamma,0} \GIT_{2} \bC^*_0$ and
$\widetilde{X}'_{\Gamma,0} \GIT_{1} \bC^*_0$ are isomorphic as algebraic 
varieties (without polarizations). We will write
$(\widetilde{\Gr}_{P_+} \times \widetilde{\Gr}_{P_-}
    \GIT_{0} \bC^*_{d_{\alpha}}) \GIT_{1} \bC^*_0$ as 
$\widetilde{\Gr}_{P_+} \times \widetilde{\Gr}_{P_-}
    \GIT_{(1,0)} \bC^*_0 \times \bC^*_{d_{\alpha}}$
for simplicity.
\end{rem}

We now define a {\it degeneration of $\Gr(2,n)$ in stages} as follows.
For $\alpha = 1, \dots , n-3$, let
$f^{(\alpha)}_{\Gamma} : \frakX_{\Gamma}^{(\alpha)} \to \mathbb{C}$ 
be a subfamily of $\frakX_{\Gamma}$ defined by
\[
  t_1 = \dots = t_{\alpha-1} = 0,
  \quad
  t_{\alpha+1} = \dots = t_{n-3} = 1,
\]
and write its fibers as 
$X_{\Gamma, t}^{(\alpha)} = (f^{(\alpha)}_{\Gamma})^{-1}(t)$.
Then $\frakX^{(\alpha)}_{\Gamma}$ ($\alpha = 1, \dots, n-3$) give a sequence 
of families such that
$X_{\Gamma,1}^{(1)} = \Gr(2,n)$, 
$X_{\Gamma,0}^{(n-3)} = X_{\Gamma, 0}$, and
$X_{\Gamma,0}^{(\alpha)} = X_{\Gamma,1}^{(\alpha +1)}$
for each $\alpha$.
Furthermore, from the choice of the numbering of the diagonals, 
$\mathfrak{X}_{\Gamma}^{(\alpha)}$
admits actions of $U_{d_{\beta}}$ for all $\beta \ge \alpha$.
Let 
\[
  P = P^{(\alpha)}_1 \cup \dots \cup P^{(\alpha)}_{\alpha+1}
\]
be the subdivision of $P$ given by the diagonals
$d_1, \dots, d_{\alpha}$,
and $\Gamma^{(\alpha)}_m$ be the triangulation of
$P^{(\alpha)}_m$ induced from $\Gamma$.
For each diagonal $d_{\beta}$ we introduce $S^1_{d_{\beta}} \cong S^1$ and its 
complexification $\bC^*_{d_{\beta}}$,
and extend (\ref{eq:HMM-action}) to a 
$\bC^*_0 \times \bC^*_{d_1} \times \dots \times \bC^*_{d_{\alpha}}$-action on 
$\widetilde{\Gr}_{P^{(\alpha)}_1}\times \dots 
\times \widetilde{\Gr}_{P^{(\alpha)}_{\alpha+1}}$ in an obvious manner.
Namely, $\bC^*_0$ acts diagonally on the coordinates 
having a side $e_1, \dots, e_n$ of $P$ in its index, 
while $\bC^*_{d_{\beta}}$ acts anti-diagonally on the 
coordinates indexed the diagonal $d_{\beta}$.
Then Proposition \ref{prop:central_fiber} implies

\begin{cor} \label{cor:alpha-th}
  The central fiber $X_{\Gamma, 0}^{(\alpha)} \subset 
  \mathbb{P} (\bigwedge^2 \mathbb{C}^n)$ of the $\alpha$-th stage
  $\mathfrak{X}_{\Gamma}^{(\alpha)}$ is isomorphic to 
  \[
    \widetilde{\Gr}_{P^{(\alpha)}_1} \times 
    \dots \times \widetilde{\Gr}_{P^{(\alpha)}_{\alpha+1}}
   \GIT_{(1,0,\dots,0)} 
   \bC^*_0 \times \bC^*_{d_1} \times \dots \times \bC^*_{d_\alpha} ,
  \]
  where the weight of the 
  $\bC^*_0 \times \bC^*_{d_1} \times \dots \times \bC^*_{d_\alpha}$-action 
  is $(1,0, \dots, 0)$.
  Moreover, the $(\alpha+1)$-st stage of the degeneration is given by
  \[
    \mathfrak{X}^{(\alpha +1)}_{\Gamma} \cong
    \widetilde{\mathfrak{X}}^{(1)}_{\Gamma^{(\alpha)}_1} \times
    \widetilde{\Gr}_{P^{(\alpha)}_2} \times 
    \dots \times \widetilde{\Gr}_{P^{(\alpha)}_{\alpha+1}}
   \GIT_{(1,0,\dots,0)} 
   \bC^*_0 \times \bC^*_{d_1} \times \dots \times \bC^*_{d_\alpha} ,
  \]
  where we assume that $d_{\alpha+1}$ is a diagonal of $P^{(\alpha)}_1$,
  and $\mathfrak{X}^{(1)}_{\Gamma^{(\alpha)}_1}$ is the first stage of the degeneration 
  of $\Gr_{P^{(\alpha)}_1}$ corresponding to $d_{\alpha+1}$.
\end{cor}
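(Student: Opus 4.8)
The plan is to argue by induction on $\alpha$, treating Proposition \ref{prop:central_fiber} both as the base case and as the engine of the inductive step. For $\alpha = 1$ the assertion is exactly Proposition \ref{prop:central_fiber}: subdividing $P$ by the single diagonal $d_1$ produces the two pieces $P^{(1)}_1 = P_+$ and $P^{(1)}_2 = P_-$, and the central fiber $X^{(1)}_{\Gamma, 0} = X'_{\Gamma, 0}$ is the stated quotient, where one passes from the polarization $(2,0)$ to $(1,0)$ using the Remark following Proposition \ref{prop:central_fiber}. Throughout it is convenient to work first at the level of the affine cones over the projective varieties, performing the $\bCx_0$-quotient (the projectivization) only at the very end; this is harmless because the $\bCx_0$-action at weight $1$ is precisely the one defining the cone projection and it commutes with every $\bCx_{d_\beta}$.

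For the inductive step, assume the isomorphism for $\alpha$, so that at the cone level
\[
 \widetilde{X}^{(\alpha)}_{\Gamma,0}
 \cong \Grtilde_{P^{(\alpha)}_1} \times \cdots \times \Grtilde_{P^{(\alpha)}_{\alpha+1}}
 \GIT_{(0,\dots,0)} \bCx_{d_1} \times \cdots \times \bCx_{d_\alpha},
\]
with $X^{(\alpha)}_{\Gamma,0}$ recovered by a final $\GIT_1 \bCx_0$. The chain $X^{(\alpha)}_{\Gamma,0} = X^{(\alpha+1)}_{\Gamma,1}$ identifies this with the general fiber of the $(\alpha+1)$-st stage, which is the one-parameter degeneration in the single remaining parameter $t_{\alpha+1}$. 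By the numbering convention the diagonal $d_{\alpha+1}$ is a diagonal of the single piece $P^{(\alpha)}_1$ (as assumed in the statement) and subdivides it into the two pieces of the finer decomposition $P = P^{(\alpha+1)}_1 \cup \cdots \cup P^{(\alpha+1)}_{\alpha+2}$. I would then apply Proposition \ref{prop:central_fiber} to the cone $\Grtilde_{P^{(\alpha)}_1}$ alone: its ``moreover'' clause replaces the factor $\Grtilde_{P^{(\alpha)}_1}$ by the first-stage family $\widetilde{\mathfrak{X}}^{(1)}_{\Gamma^{(\alpha)}_1}$, yielding the ``moreover'' identity for $\mathfrak{X}^{(\alpha+1)}_\Gamma$; setting $t_{\alpha+1} = 0$ replaces that factor instead by $\Grtilde_{P^{(\alpha+1)}_1} \times \Grtilde_{P^{(\alpha+1)}_2} \GIT_0 \bCx_{d_{\alpha+1}}$, introducing exactly one new anti-diagonal torus $\bCx_{d_{\alpha+1}}$ at weight $0$. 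Substituting and reindexing the pieces gives the desired description of $X^{(\alpha+1)}_{\Gamma,0}$, with weight vector $(1,0,\dots,0)$ as claimed.

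The step requiring the most care — and the point I expect to be the main obstacle — is the legitimacy of applying Proposition \ref{prop:central_fiber} to a single tensor factor inside the larger iterated quotient, i.e. that these GIT quotients may be formed in stages. Here one uses the explicit description \eqref{eq:HMM-action}: each $\bCx_{d_\beta}$ acts anti-diagonally only on the pair of coordinates carrying the index $d_\beta$ and trivially on all others, so the actions attached to distinct diagonals commute and act on disjoint factors. Consequently the quotient of $\Grtilde_{P^{(\alpha)}_1}$ by the new $\bCx_{d_{\alpha+1}}$, gluing the two new cones along $d_{\alpha+1}$, can be carried out before the remaining quotients without affecting them, and the single projectivizing $\bCx_0$ can be applied last. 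Once this staging is justified, what remains is the combinatorial bookkeeping that the two new cones are indeed $\Grtilde_{P^{(\alpha+1)}_1}$ and $\Grtilde_{P^{(\alpha+1)}_2}$ and that the diagonals bounding $P^{(\alpha)}_1$ each lie on one side of $d_{\alpha+1}$, so that the weight vector acquires only the new $0$-entry while its existing entries are unchanged; both follow directly from the numbering convention $I_{\alpha+1} \subset I_1$.
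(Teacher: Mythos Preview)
Your proposal is correct and matches the paper's approach: the paper presents this statement as a corollary with no explicit proof beyond the sentence ``Then Proposition \ref{prop:central_fiber} implies,'' and your inductive argument is precisely the intended iterated application of that proposition. Your discussion of GIT in stages and the polarization shift $(2,0) \to (1,0)$ via the Remark is a careful fleshing-out of what the paper leaves implicit.
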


Now let us look at the last stage $\alpha = n-3$ of the degeneration.
The reference polygon is divided as
$$
 P = P_1^{(n-3)} \cup \cdots \cup P_{n-2}^{(n-3)}
$$
where all $P^{(n-3)}_m$ $(1 \le m \le n-2)$ are triangles,
and we have 
$$
 \widetilde{\Gr}_{P^{(n-3)}_m}
  = V_{P^{(n-3)}_m}
  = \Spec \bC \left[Z_{ab}^{P_m^{(n-3)}}\right]
  \cong \textstyle{\bigwedge^2} \mathbb{C}^3,
$$
where $a, b \in \{ e_1, \dots, e_n, d_1, \dots, d_{n-3} \}$
are edges in the triangle $P_m^{(n-3)}$.

\begin{cor}[Howard, Manon and Millson \cite{Howard-Manon-Millson}]
 \label{cr:XG}
  The central fiber $\XG := X_{\Gamma, 0}$ of 
  $\mathfrak{X}_{\Gamma}$ is a toric variety given by
  \begin{multline*}
    V_{P^{(n-3)}_1}
    \times \dots \times V_{P^{(n-3)}_{n-2}}
   \GIT_{(1,0,\dots,0)} 
   \bC^*_0\times \bC^*_{d_1} \times \dots 
   \times \bC^*_{d_{n-3}} \\
   \cong
   (\textstyle{\bigwedge^2} \mathbb{C}^3 )^{n-2} \GIT (\bC^*)^{n-2}.
  \end{multline*}
\end{cor}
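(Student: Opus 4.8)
The plan is to obtain the statement as the terminal case $\alpha = n-3$ of the stagewise description already established in Corollary \ref{cor:alpha-th}. First I would recall from the discussion preceding that corollary that consecutive stages are glued by $X^{(\alpha)}_{\Gamma,0} = X^{(\alpha+1)}_{\Gamma,1}$, with $X^{(n-3)}_{\Gamma,0} = X_{\Gamma,0} = \XG$ at the final end; thus $\XG$ is precisely $X^{(n-3)}_{\Gamma,0}$. At this last stage the diagonals $d_1, \dots, d_{n-3}$ subdivide $P$ into exactly $\alpha + 1 = n-2$ pieces $P^{(n-3)}_1, \dots, P^{(n-3)}_{n-2}$, so substituting $\alpha = n-3$ into Corollary \ref{cor:alpha-th} gives
\[
  \XG \cong \widetilde{\Gr}_{P^{(n-3)}_1} \times \dots \times \widetilde{\Gr}_{P^{(n-3)}_{n-2}}
  \GIT_{(1,0,\dots,0)} \bC^*_0 \times \bC^*_{d_1} \times \dots \times \bC^*_{d_{n-3}}.
\]

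Next I would identify the factors and count the acting group. Each piece $P^{(n-3)}_m$ is a triangle, and the excerpt already records that for a triangle $P'$ one has $\widetilde{\Gr}_{P'} = V_{P'} = \textstyle{\bigwedge^2} \bC^3 \cong \bC^3$: the only Pl\"ucker coordinates are the three $Z_{ab}$ indexed by the edges, and there are no Pl\"ucker relations to impose. Substituting this into each factor turns the product into $(\textstyle{\bigwedge^2} \bC^3)^{n-2}$. On the group side there are precisely $1 + (n-3) = n-2$ circle factors $\bC^*_0, \bC^*_{d_1}, \dots, \bC^*_{d_{n-3}}$, so the structure group is $(\bC^*)^{n-2}$. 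This yields the asserted presentation $(\textstyle{\bigwedge^2} \bC^3)^{n-2} \GIT (\bC^*)^{n-2}$, and matches the first isomorphism in the statement.

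Finally I would check that this quotient is a toric variety. The space $(\textstyle{\bigwedge^2} \bC^3)^{n-2} \cong \bC^{3(n-2)}$ is an affine space, hence a toric variety for its standard torus $(\bC^*)^{3(n-2)}$ acting coordinatewise, and by construction $(\bC^*)^{n-2}$ acts through the characters on the Pl\"ucker coordinates specified just before Corollary \ref{cor:alpha-th} (the factor $\bC^*_0$ scaling the coordinates diagonally, each $\bC^*_{d_\beta}$ acting anti-diagonally on those indexed by $d_\beta$), i.e. linearly and diagonally in these coordinates. Since a GIT quotient of an affine toric variety by a torus acting through its defining torus is again toric, the claim follows; the correct dimension $2n-4 = \dim_\bC \Gr(2,n)$ is in any case automatic because $\XG$ is a flat degeneration of $\Gr(2,n)$. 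The substantive geometric input has already been supplied upstream in Proposition \ref{prop:central_fiber} and Corollary \ref{cor:alpha-th}, so the present corollary reduces to this specialization plus the standard toric-quotient fact; accordingly I expect the toric-structure verification, rather than the identification of the quotient itself, to be the only step requiring any attention.
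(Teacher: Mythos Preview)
Your proposal is correct and follows essentially the same approach as the paper: the corollary is obtained by specializing Corollary \ref{cor:alpha-th} to $\alpha = n-3$, at which point all the pieces $P^{(n-3)}_m$ are triangles and the paper has already noted that $\widetilde{\Gr}_{P'} = V_{P'} = \bigwedge^2 \bC^3$ in that case. The paper treats this as immediate from the setup and does not even supply a separate proof; your added verification that the quotient is toric and of the correct dimension is more explicit than what the paper writes, but not a different argument.
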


The cone
\begin{multline*}
 V_{P^{(n-3)}_1}
  \times \dots \times V_{P^{(n-3)}_{n-2}}
  \GIT_{(0,\dots,0)} 
   \bC^*_{d_1} \times \dots \times \bC^*_{d_{n-3}} \\
 =
  \Spec \left(
  \bC \left[ Z_{ab}^{P_m^{(n-3)}} \right]^{
   \bC^*_{d_1} \times \dots \times \bC^*_{d_{n-3}}}
   \right)
\end{multline*}
over $\XG$
is embedded into the affine space $\bigwedge^2 \bC^n$
by the surjection
$$
 \bC[Z_{ij}] \to \bC \left[ Z_{ab}^{P_m^{(n-3)}} \right]^{
   \bC^*_{d_1} \times \dots \times \bC^*_{d_{n-3}}},
  \qquad
 Z_{ij} \mapsto \prod_{((a,b),m)} Z_{ab}^{P_m^{(n-3)}},
$$
where the product on the right hand side
runs over triples $((a, b), m)$ of edges $(a, b)$
of a triangle $P_m^{(n-3)}$
in the triangulation
$P = P_1^{(n-3)} \cup \dots \cup P_{n-2}^{(n-3)}$
crossed by the path $\gamma(i, j)$
connecting the $i$-th and the $j$-th leaves
as in Figure \ref{fig:graphpath}.

Defining equations for the image of the embedding
$
 \XG \hookrightarrow \bP \lb \bigwedge^2 \bC^n \rb
$
come from those of the embedding
$$
 V_{P_a} \times V_{P_b}
  \bigGIT_0 \bC_{d_\alpha}^*
  \hookrightarrow \bigwedge^2 \bC^4,
$$
where $P_a = P_a^{(n-3)}$ and
$P_b = P_b^{(n-3)}$ are two triangles
sharing a diagonal $d = d_{\alpha}$. 
Let $a_1$ and $a_2$ (resp. $b_1$ and $b_2$) be
the remaining  edges of $P_a$ (resp. $P_b$).
Let $P_c = P_c^{(n-2)} = P_a \cup P_b$ be a quadrilateral
obtained as the union of $P_a$ and $P_b$.
Then the inclusion is defined by
the homomorphism
\begin{align*}
 Z_{a_1 b_1}^{P_c} &= Z_{a_1 d}^{P_a} Z_{b_1 d}^{P_b}, \\
 Z_{a_1 b_2}^{P_c} &= Z_{a_1 d}^{P_a} Z_{b_2 d}^{P_b}, \\
 Z_{a_2 b_1}^{P_c} &= Z_{a_2 d}^{P_a} Z_{b_1 d}^{P_b}, \\
 Z_{a_2 b_2}^{P_c} &= Z_{a_2 d}^{P_a} Z_{b_2 d}^{P_b}, \\
 Z_{a_1 a_2}^{P_c} &= Z_{a_1 a_2}^{P_a}, \\
 Z_{b_1 b_2}^{P_c} &= Z_{b_1 b_2}^{P_b},
\end{align*}
so that the defining equation
of the image is given by
$$
 Z_{a_1 b_1}^{P_c} Z_{a_2 b_2}^{P_c}
  = Z_{a_1 b_2}^{P_c} Z_{a_2 b_1}^{P_c}.
$$
It follows that
the singular locus of
$
 V_{P_a} \times V_{P_b}
  \bigGIT_0 \bC_{d_\alpha}^*
$
is given by
\begin{equation*} 
 Z_{a_1 b_1}^{P_c} = Z_{a_1 b_2}^{P_c}
  = Z_{a_2 b_1}^{P_c} = Z_{a_2 b_2}^{P_c} = 0,
\end{equation*}
that is,
\begin{equation} \label{eq:singularity}
 Z_{a_1 d}^{P_a} = Z_{a_2 d}^{P_a} = 0 \, \text{ or } \,
 Z_{d b_1}^{P_b} = Z_{d b_2}^{P_b} = 0,
\end{equation}
with $Z_{a_1 a_2}^{P_a}$ and $Z_{b_1 b_2}^{P_b}$ arbitrary.
This gives a codimension 3 singularity
in $\XG$.
Since $\XG$ is a toric variety
obtained as the quotient of an affine space
by torus action,
a singular point of $\XG$ comes from
a point on the affine space
$
 V_{P^{(n-3)}_1} \times \dots \times V_{P^{(n-3)}_{n-2}}
$
where the torus action has a non-trivial stabilizer.
Such a point is contained in one of 
the locus defined by \eqref{eq:singularity},
and one obtains the following:

\begin{proposition} \label{pr:Sing_XG}
The singular locus $\Sing(\XG)$ of $\XG$ is the union
$$
 \Sing (\XG)_A 
  = \bigcup_{\alpha = 1}^{n-3} \Sing (\XG)_\alpha
$$
where
$$
 \Sing (\XG)_{\alpha} 
 = \{ Z_{a_1 d_{\alpha}}^{P_a} = Z_{a_2 d_{\alpha}}^{P_a} = 0 \, 
   \text{ or } \, 
      Z_{b_1 d_{\alpha}}^{P_b} = Z_{b_2 d_{\alpha}}^{P_b} = 0 \}.
$$
\end{proposition}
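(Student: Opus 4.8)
The plan is to read the singular locus straight off the toric presentation of Corollary \ref{cr:XG}, in which $\XG \cong (\bigwedge^2 \bC^3)^{n-2} \GIT (\bC^*)^{n-2}$, each factor $V_{P_m^{(n-3)}} \cong \bC^3$ carrying the three coordinates $Z_{ab}^{P_m^{(n-3)}}$ attached to the edges of the triangle $P_m^{(n-3)}$, and the torus being $\bC^*_0 \times \bC^*_{d_1} \times \dots \times \bC^*_{d_{n-3}}$. Since $\XG$ is thus a GIT quotient of a smooth affine space by a torus, a point is singular exactly when the torus acts with nontrivial stabilizer on the corresponding closed orbit, equivalently when the local cone of the defining fan fails to be smooth. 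The whole task is to locate those orbits and match them with the strata $\Sing(\XG)_\alpha$.

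First I would record the weights of the coordinates. Only the four coordinates carrying a diagonal $d_\alpha$ in their index, namely $Z_{a_1 d_\alpha}^{P_a}, Z_{a_2 d_\alpha}^{P_a}$ and $Z_{b_1 d_\alpha}^{P_b}, Z_{b_2 d_\alpha}^{P_b}$ with $P_a, P_b$ the two triangles adjacent to $d_\alpha$, are moved by $\bC^*_{d_\alpha}$, which acts on them with weights $+1,+1,-1,-1$ and trivially on every other coordinate; this follows from \eqref{eq:HMM-action} and its iteration in Corollary \ref{cor:alpha-th}. The factor $\bC^*_0$ supplies the polarization and acts with a common positive weight on all coordinates.

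Then I would compute the torus stabilizer of a semistable point $Z$. Because $\bC^*_{d_\alpha}$ acts with weights $\pm 1$ on exactly those four coordinates, a single nonzero one among them forces the $d_\alpha$-factor of any stabilizing element to be trivial, whereas if all four vanish then the whole subgroup $\bC^*_{d_\alpha}$ fixes $Z$. Hence the stabilizer is nontrivial precisely when, for some $\alpha$, all four coordinates indexed by $d_\alpha$ vanish, i.e. $Z \in \Sing(\XG)_\alpha$, which gives the inclusion $\Sing(\XG) \subseteq \bigcup_{\alpha} \Sing(\XG)_\alpha$. For the reverse inclusion I would invoke the local computation carried out before the statement: near $d_\alpha$ the quotient $V_{P_a} \times V_{P_b} \bigGIT_0 \bC^*_{d_\alpha}$ is the quadric cone $\{ Z_{a_1 b_1}^{P_c} Z_{a_2 b_2}^{P_c} = Z_{a_1 b_2}^{P_c} Z_{a_2 b_1}^{P_c} \}$, whose vertex is exactly \eqref{eq:singularity}, so each stratum $\Sing(\XG)_\alpha$ is a genuine codimension-three conifold singularity.

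The main obstacle I anticipate is the passage from ``nontrivial torus stabilizer'' to ``singular point of the quotient,'' together with the verification that the union is \emph{exactly}, rather than merely contains or is contained in, the singular locus. In the affine $\bigGIT_0$ directions a jumping stabilizer is positive-dimensional, so the naive orbit stabilizer must be replaced by that of the closed-orbit representative, and one must track $\bC^*_0$-semistability to discard configurations (such as those supported only on coordinates of all-diagonal triangles) that do not descend to $\XG$. One must also confirm that the stabilizer jump caused by one $\bC^*_{d_\alpha}$ is neither created nor cancelled by the remaining torus factors acting on coordinates it shares in an interior triangle; the structural fact that each $\bC^*_{d_\alpha}$ is pinned down by the four edges surrounding $d_\alpha$ is what localizes every singularity to a conifold and lets the local models patch into the global union above.
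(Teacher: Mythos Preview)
Your proposal is correct and follows essentially the same approach as the paper: the paper likewise reads the singular locus off the GIT presentation of Corollary \ref{cr:XG}, arguing that singular points of $\XG$ arise exactly from points of the affine space with nontrivial torus stabilizer, and then observes that such points lie in one of the loci \eqref{eq:singularity} for some $\alpha$, with the local quadric cone computation supplying the reverse inclusion. Your write-up is in fact more detailed than the paper's own brief discussion preceding the proposition, and your anticipated obstacles (the passage from stabilizer jumps to genuine singularities, and the non-interference of distinct $\bC^*_{d_\alpha}$-factors) are exactly the points the paper leaves implicit.
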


\section{Toric degeneration of the integrable system}
 \label{sc:deg_CIS}

The following definition is introduced
in \cite[Definition 1.1]{Nishinou-Nohara-Ueda_TDGCSPF}:

\begin{df} \label{def:toric_degeneration}
Let $\Phi : X \to \bR^N$ be a completely integrable system
on a projective K\"ahler manifold $(X, \omega)$.
A {\em toric degeneration} of $\Phi$
consists of a flat family $f : \frakX \to B$
of algebraic varieties over a complex manifold $B$,
a K\"{a}hler form $\omegatilde$ on the smooth locus of $\frakX$,
a piecewise smooth path $\gamma : [0,1] \to B$,
a continuous map
$\Phitilde : \frakX |_{\gamma([0,1])} \to \bR^N$
on  $\frakX |_{\gamma([0,1])} = f^{-1}(\gamma([0,1]))$,
and a flow $\phi_t$ on $\frakX |_{\gamma([0,1])}$ 
which covers the path $\gamma$ and is defined away from the union 
$\bigcup_{t \in [0,1]} \mathrm{Sing}(X_t)$ 
of the singular loci of the fibers $X_t = f^{-1}(\gamma(t))$
such that
\begin{itemize}
 \item for each $t \in [0,1]$, the restriction 
       $\Phi_t = \Phitilde|_{X_t}$ is a completely integrable system
       on the K\"{a}hler variety 
       $(X_t, \omega_t = \omegatilde|_{X_t})$,
       whose image $\Phi_t(X_t)$ is a convex polytope $\Delta$
       independent of $t$,
 \item $(X_1, \omega_1)$ is isomorphic to $(X, \omega)$
       as a K\"{a}hler manifold,
 \item $\Phi_1$ coincides with $\Phi$ 
       under the above isomorphism $X_1 \cong X$,
 \item $(X_0, \omega_0)$ is a toric variety
       with a torus-invariant K\"{a}hler form, 
 \item $\Phi_0 : X_0 \to \bR^N$ is the moment map
       for the torus action on $X_0$ (hence $\Delta$ is 
       a moment polytope of $X_0$), and
 \item For each $t$, there is an open dense subset $X_t^{\circ} \subset X_t$ 
       such that
       the flow $\phi_t$ sends $X_{t'}^{\circ}$ to another fiber 
       $X_{t'-t}^{\circ}$ preserving the symplectic structures 
       and the completely integrable systems:
       \[
         \xymatrix{
           (X_{t'}^{\circ}, \omega_{t'})  \ar[dr]_{\Phi_{t'}} 
           \ar[rr]^{ \phi_{t}}
           & & (X_{t'-t}^{\circ}, \omega_{t'-t}) 
           \ar[dl]^{\Phi_{t'-t}} \\ 
           & \Delta &
         }
       \]
\end{itemize}
\end{df}

In this section we construct a toric degeneration of 
$\Psi_{\Gamma} : \Gr(2,n) \to \mathbb{R}^{\N}$.
We consider the family
$f_{\Gamma}: \frakX_{\Gamma} \to \mathbb{C}^{n-3}$ constructed in the
previous section,
and let $\gamma$ be a piecewise linear path
connecting the vertices
\[
  (1, \dots, 1), (0,1,\dots,1), \dots , (0,\dots,0,1),
  (0,\dots,0).
\]
Then the restriction of $\frakX_{\Gamma}$ to $\gamma$ is the degeneration 
in stages.
We take a K\"ahler form $\widetilde{\omega}_{\bld{r}}$ on
$\mathfrak{X}_{\Gamma}$ such that the restriction 
$\omega_{\bld{r}, t} = \widetilde{\omega}_{\bld{r}}|_{X_{\Gamma, t}}$ 
to each fiber of $f_{\Gamma}$
is the constant multiple 
$|\bld{r}| \cdot \omega_{\mathrm{FS}}|_{X_{\Gamma, t}}$
of the Fubini-Study form $\omega_{\mathrm{FS}}$ on 
$\mathbb{P}(\bigwedge^2 \mathbb{C}^n)$.
For each stage $f^{(\alpha)}_{\Gamma}: \frakX^{(\alpha)}_{\Gamma} \to \mathbb{C}$
of the degeneration, we define a map
\[
  \widetilde{\Psi}^{(\alpha)}_{\Gamma} : 
  \frakX^{(\alpha)}_{\Gamma} \longrightarrow \bR^\N
\]
as follows.
Recall that $P^{(\alpha-1)}_1, \dots, P^{(\alpha-1)}_{\alpha}$ are subpolygons
obtained by cutting $P$ along the diagonals $d_1, \dots, d_{\alpha-1}$, and 
$\Gamma^{(\alpha-1)}_m$ are the triangulation of $P^{(\alpha-1)}_m$ 
induced from $\Gamma$.
From Corollary \ref{cor:alpha-th}, we have
\[
    \mathfrak{X}^{(\alpha)}_{\Gamma} \cong
    \widetilde{\mathfrak{X}}^{(1)}_{\Gamma^{(\alpha-1)}_1} \times
    \widetilde{\Gr}_{P^{(\alpha-1)}_2} \times 
    \dots \times \widetilde{\Gr}_{P^{(\alpha-1)}_{\alpha}}
   \GIT 
   \bC^*_0 \times \bC^*_{d_1} \times \dots \times \bC^*_{d_{\alpha-1}} ,
\]
where we assume that $d_{\alpha}$ is a diagonal in $P^{(\alpha-1)}_1$.
The actions of $U_{d_{\beta}}$ for $\beta \ge \alpha$ and 
$S^1_{e_i}$ are induced from those on
$\widetilde{\mathfrak{X}}^{(1)}_{\Gamma^{(\alpha-1)}_1}$ or 
$\widetilde{\Gr}_{P^{(\alpha-1)}_m}$ for some $m$.
We define 
\[  
  \widetilde{\lambda}_{\beta,j} : \frakX^{(\alpha)}_{\Gamma} 
  \longrightarrow \mathbb{R},
  \quad
  j=1, 2
\]
to be the first and the second eigenvalues
of the values of the moment map
of the $U_{d_{\beta}}$-action,
which is a natural extension of $\lambda_{\beta,j}$.
We also extend the moment map $\psi_{e_i}$ of the $S^1_i$-action  
to
\[
  \widetilde{\psi}_{e_i} : \frakX^{(\alpha)}_{\Gamma} 
  \longrightarrow \mathbb{R},
\]
for $i=1, \dots, n$. 
The space $\frakX^{(\alpha)}_{\Gamma}$ has
an action of another torus 
$S^1_{d_1} \times \dots \times S^1_{d_{\alpha}}$,
where $S^1_{d_{\gamma}}$ ($1 \le \gamma \le \alpha$)
acts diagonally on Pl\"ucker coordinates 
$Z_{a d_{\gamma}}^{P^{(\alpha-1)}_m}$ having $d_{\gamma}$ in their indices.
In other words, the weight of a (Pl\"ucker) coordinate $Z_{ij}$ of 
$\mathbb{P} (\bigwedge^2 \mathbb{C}^n)$ with respect to the 
$S^1_{d_{\gamma}}$-action
is 1 if the path $\gamma (i,j)$ from $i$ to $j$ crosses
$d_{\gamma}$, and 0 otherwise.
Let
\[
  \widetilde{\mu}_{S^1_{\gamma}} : \frakX^{(\alpha)}_{\Gamma} 
  \longrightarrow \mathbb{R}
\]
be the moment map of the $S^1_{d_{\gamma}}$-action, and set
\[
  \widetilde{\nu}_{\gamma} = 
  \frac 12 \left( \sum_{i \in I_{\gamma}} \psi_{e_i} 
  - \widetilde{\mu}_{S^1_{\gamma}} \right).
\]
We define
$
 \widetilde{\Psi}^{(\alpha)}_{\Gamma} : \frakX^{(\alpha)}_{\Gamma} 
  \to \bR^\N
$
by
$$
  \widetilde{\Psi}^{(\alpha)}_{\Gamma} =
  \bigl( \widetilde{\nu}_{1}, \dots, \widetilde{\nu}_{\alpha-1},
         \widetilde{\lambda}_{\alpha,2}, \dots, \widetilde{\lambda}_{n-3,2},
         \widetilde{\psi}_{e_1}, \dots,  \widetilde{\psi}_{e_{n-1}}
  \bigr),
$$
and $\Psi^{(\alpha)}_{\Gamma, t} 
= \widetilde{\Psi}^{(\alpha)}_{\Gamma}|_{X^{(\alpha)}_{\Gamma, t}}$.
Then $\Psi^{(1)}_{\Gamma, 1} = \Psi_{\Gamma}$ on 
$X^{(1)}_{\Gamma, 1} = \Gr(2,n)$ from the construction.

We define $\phi^{(\alpha)}_t$ on $\mathfrak{X}^{(\alpha)}_{\Gamma}$ 
to be the
{\it gradient-Hamiltonian flow} of $f_{\Gamma}^{(\alpha)}$ 
introduced by W.-D. Ruan \cite{Ruan_I}.
Regarding
$
 f^{(\alpha)}_\Gamma
  : \frakX^{(\alpha)}_\Gamma \to \bC
$
as a holomorphic function, the normalized gradient-Hamiltonian vector field 
is defined by
\[
  \xi^{(\alpha)} := -\frac {\nabla (\Re f^{(\alpha)}_{\Gamma})}
                  {|\nabla (\Re f^{(\alpha)}_{\Gamma})|^2}
        = \frac {\xi_{\Im f^{(\alpha)}_{\Gamma}}}
                  {|\xi_{\Im f^{(\alpha)}_{\Gamma}}|^2},
\]
where $\nabla (\Re f^{(\alpha)}_{\Gamma})$ is the gradient vector field of the 
real part of $f^{(\alpha)}_{\Gamma}$, and
$\xi_{\Im f^{(\alpha)}_{\Gamma}}$ is the Hamiltonian vector field of
the imaginary part of $f^{(\alpha)}_{\Gamma}$.
It is shown in \cite{Ruan_I} that $\xi^{(\alpha)}$ is defined on 
the smooth locus of fibers $X^{(\alpha)}_{\Gamma,t}$, and its flow
$\phi^{(\alpha)}_t = \exp t \xi^{(\alpha)}$ gives a symplectomorphism
\[
   \phi^{(\alpha)}_{1-t} : 
   (W^{(\alpha)}_{\Gamma,1}, \omega_{\bld{r},1})
   \longrightarrow
   (W^{(\alpha)}_{\Gamma,t}, \omega_{\bld{r},t})
\]
for some open subsets 
$W^{(\alpha)}_{\Gamma,t} \subset X^{(\alpha)}_{\Gamma,t}$.

\begin{rem}
  The authors do not know whether $\phi^{(\alpha)}_{1-t}$ can be 
  extended to $X^{(\alpha)}_{\Gamma,1} \to X^{(\alpha)}_{\Gamma,t}$.
  Note that the total space of the family is not smooth in general, 
  and hence we can not apply the analysis in \cite{Ruan_II}.
\end{rem}

We begin with the proof of Theorem \ref{th:toricdeg_nohara} 
for the first stage.

\begin{lem} \label{lem:ham-1st_stage}
  $\Psi^{(1)}_{\Gamma,t} : X_{\Gamma,t}^{(1)} \to \bR^\N$
  is a completely integrable system for each $t$.
  The gradient-Hamiltonian flow $\phi^{(1)}_t$ is defined on an open dense 
  subset $W_{\Gamma, t}^{(1)} \subset X_{\Gamma, t}^{(1)}$ and 
  gives a symplectomorphism preserving the completely integrable systems,
  i.e., 
  \[
    \xymatrix{
    (W^{(1)}_{\Gamma,t}, \omega_{\bld{r}, t})  
    \ar[dr]_{\Psi^{(1)}_{\Gamma, t}} 
    \ar[rr]^{ \phi^{(1)}_{t-t'}}
    & & (W_{\Gamma, t'}^{(1)}, \omega_{\bld{r}, t'}) 
        \ar[dl]^{\Psi^{(1)}_{\Gamma, t'}} \\ 
    & \bR^\N &
            }
  \]
  commutes.
  Furthermore $\Psi^{(1)}_{\Gamma,0}$ coincides with $\Psi^{(2)}_{\Gamma,1}$
  on $X^{(1)}_{\Gamma,0} = X^{(2)}_{\Gamma,1}$.
\end{lem}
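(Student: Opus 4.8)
The plan is to obtain the first two assertions formally from the equivariance of the degeneration combined with the basic properties of Ruan's gradient-Hamiltonian flow \cite{Ruan_I}, and to settle the last assertion by a direct computation on the central fiber using the description in Proposition \ref{prop:central_fiber}.

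First I would record the equivariance. For the first stage the family $f^{(1)}_\Gamma : \frakX^{(1)}_\Gamma \to \bC$ coincides with the subfamily $f'_\Gamma$ (the case $\alpha=1$, where the conditions $t_1=\dots=t_{\alpha-1}=0$ are vacuous), which is $(U_{d_+}\times U_{d_-})$-invariant by the invariance lemma established above. By the chosen numbering and Remark \ref{rem:subgroups}, all the groups $U_{d_\beta}$ ($\beta=1,\dots,n-3$) and $S^1_{e_i}$ ($i=1,\dots,n$) embed into $U_{d_+}\times U_{d_-}$; they act on the Pl\"ucker coordinates and trivially on the base, so they preserve every fiber together with its K\"ahler form $\omega_{\br,t}$, and $f^{(1)}_\Gamma$ — hence $\Im f^{(1)}_\Gamma$ — is invariant under each of them. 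Now each $\widetilde\lambda_{\beta,2}$ is the pullback under $\mu_{U_{d_\beta}}$ of an $\Ad^*$-invariant function, so its Hamiltonian vector field is everywhere tangent to the $U_{d_\beta}$-orbits; since $\Im f^{(1)}_\Gamma$ is constant along those orbits we obtain $\{\widetilde\lambda_{\beta,2},\Im f^{(1)}_\Gamma\}=0$, and likewise $\{\widetilde\psi_{e_i},\Im f^{(1)}_\Gamma\}=0$. Thus $\widetilde\lambda_{\beta,2}$ and $\widetilde\psi_{e_i}$ are constant along the Hamiltonian flow of $\Im f^{(1)}_\Gamma$, hence along the gradient-Hamiltonian flow $\phi^{(1)}_t=\exp t\,\xi^{(1)}$, which has the same reparametrized trajectories. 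This is exactly the assertion that $\widetilde\Psi^{(1)}_\Gamma$ is preserved by $\phi^{(1)}_t$, i.e.\ that the stated triangle commutes.

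Next I would invoke Ruan \cite{Ruan_I}: the field $\xi^{(1)}$ is defined off the singular loci of the fibers, and its flow restricts to a symplectomorphism $\phi^{(1)}_{1-t}:W^{(1)}_{\Gamma,1}\to W^{(1)}_{\Gamma,t}$ between open dense subsets for each $t\in[0,1]$, in particular reaching $t=0$ on a dense set. Since $\Psi^{(1)}_{\Gamma,1}=\Psi_\Gamma$ is a completely integrable system on $\Gr(2,n)$ by Theorem \ref{th:nohara_system}, and $\phi^{(1)}_{1-t}$ is a symplectomorphism intertwining $\Psi^{(1)}_{\Gamma,1}$ with $\Psi^{(1)}_{\Gamma,t}$ by the previous paragraph, both Poisson-commutativity and functional independence transfer through it. Hence $\Psi^{(1)}_{\Gamma,t}$ is a completely integrable system on $(X^{(1)}_{\Gamma,t},\omega_{\br,t})$ for every $t$.

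It remains to identify $\Psi^{(1)}_{\Gamma,0}$ with $\Psi^{(2)}_{\Gamma,1}$ on $X^{(1)}_{\Gamma,0}=X^{(2)}_{\Gamma,1}$. Comparing the two definitions, every component agrees except the first, where $\widetilde\Psi^{(1)}_\Gamma$ carries $\widetilde\lambda_{1,2}$ while $\widetilde\Psi^{(2)}_\Gamma$ carries $\widetilde\nu_1=|\br|-\widetilde\mu_{S^1_{d_1}}$, so it suffices to prove $\widetilde\lambda_{1,2}=|\br|-\widetilde\mu_{S^1_{d_1}}$ on the central fiber. Here I would pass to the explicit model $\widetilde\Gr_{P_+}\times\widetilde\Gr_{P_-}\GIT\bC^*_0\times\bC^*_{d_1}$ of Proposition \ref{prop:central_fiber}: on this quotient the defining equations \eqref{eq:central_fiber} force the relevant off-diagonal products to vanish, so the non-abelian moment map $\mu_{U_{d_1}}$ degenerates and its second eigenvalue collapses to an affine-linear function of the homogeneous coordinates, which a direct computation in the coordinates \eqref{eq:GIT_quot1} identifies with $|\br|-\widetilde\mu_{S^1_{d_1}}$. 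The arguments of the previous paragraphs are routine once the invariance of $f^{(1)}_\Gamma$ is in hand, so I expect this last identification to be the main obstacle: on the generic fiber $\widetilde\lambda_{1,2}$ is merely the continuous second eigenvalue of a non-abelian moment map, and the content of the claim is that the degeneration turns it into the smooth, affine-linear moment map of the residual circle $S^1_{d_1}$. Making this precise requires carefully tracking $\mu_{U_{d_1}}$ through the GIT quotient of Proposition \ref{prop:central_fiber} and carrying out the eigenvalue computation, rather than any soft argument.
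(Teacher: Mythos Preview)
Your treatment of the first two assertions follows the paper's line, but you are too casual about one point: the density of $W^{(1)}_{\Gamma,t}$, especially at $t=0$, does not come for free from Ruan. The paper argues this separately: the preimage $\mathrm{Int}(X^{(1)}_{\Gamma,t}) = (\Psi^{(1)}_{\Gamma,t})^{-1}(\mathrm{Int}\,\Delta_\Gamma)$ is an open dense set on which the Hamiltonian $T^{2n-4}$-action is free, it lies in the smooth locus of $X^{(1)}_{\Gamma,0}$, and since $\phi^{(1)}_t$ preserves the torus action it carries $\mathrm{Int}(X^{(1)}_{\Gamma,t})$ into $\mathrm{Int}(X^{(1)}_{\Gamma,0})$. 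This is what forces $\mathrm{Int}(X^{(1)}_{\Gamma,t})\subset W^{(1)}_{\Gamma,t}$ and hence density.

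The real gap is in the last assertion. Your proposed route---reading off from the binomial relations \eqref{eq:central_fiber} that ``the relevant off-diagonal products vanish'' so that $\mu_{U_{d_1}}$ ``degenerates'' to something affine-linear---does not work as stated: the entries of $\mu_{U_{d_1}}$ are $(z_i\bar z_j + w_i\bar w_j)/2$ for $i,j\in I_1$, and these do \emph{not} vanish on the central fiber; only certain Pl\"ucker relations become binomial. The paper's argument is different and avoids any coordinate computation through \eqref{eq:GIT_quot1}. On the central fiber one works on the factor $\Gr_{P^{(1)}_1}\cong\Gr(2,n-1)$, and the key observation is that $U_{d_1}\times S^1_{d_1}\cong U(n-2)\times U(1)$ sits inside the full $U(n-1)$ acting there. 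Since $\mu_{U(n-1)}$ takes values in the coadjoint orbit of $\mathrm{diag}(|\br|,|\br|,0,\dots,0)$, the Cauchy interlacing (Gelfand--Cetlin) inequalities for the principal $(n-2)\times(n-2)$ submatrix $\mu_{U_{d_1}}$ force $|\br|\ge\widetilde\lambda_{1,1}\ge|\br|$, hence $\widetilde\lambda_{1,1}=|\br|$. Taking traces then gives
\[
  2|\br| = \tr\mu_{U(n-1)} = \tr\mu_{U_{d_1}} + \widetilde\mu_{S^1_{d_1}}
         = |\br| + \widetilde\lambda_{1,2} + \widetilde\mu_{S^1_{d_1}},
\]
so $\widetilde\lambda_{1,2} = |\br| - \widetilde\mu_{S^1_{d_1}} = \widetilde\nu_1$. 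The point is the interlacing constraint pinning $\widetilde\lambda_{1,1}$, not any degeneration of $\mu_{U_{d_1}}$ itself.
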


\begin{proof}
Poisson commutativity follows from the same argument as in the proof
of Proposition \ref{prop:Poisson_commutative}.
The fact that $f^{(1)}_{\Gamma}$ is invariant under the actions of
$U_{d_{\beta}}$ and $S^1_{e_i}$ implies that $\phi_t^{(1)}$
preserves the completely integrable system 
just as in \cite[Section 7]{Nishinou-Nohara-Ueda_TDGCSPF}.
Since $X_{\Gamma, t}^{(1)}$ is smooth for $t \ne 0$, 
$\phi_{1-t}^{(1)}$ is defined on $X_{\Gamma,1}^{(1)}$, and
\[
  \phi_{1-t}^{(1)} : X_{\Gamma, 1}^{(1)} \longrightarrow X_{\Gamma,t}^{(1)}
\]
is a symplectomorphism satisfying 
$\Psi_{\Gamma, 1}^{(1)} = {\phi_{1-t}^{(1)}}^* \Psi_{\Gamma, t}^{(1)}$.
Hence the functions in $\Psi_{\Gamma, t}^{(1)}$ are
functionally independent
on an open dense subset of $X_{\Gamma, t}^{(1)}$.

For $t=0$, there is an open subset 
$W_{\Gamma,1}^{(1)} \subset X_{\Gamma,1}^{(1)}$
on which $\phi_1^{(1)}$ is defined and gives a symplectomorphism
\begin{equation}
  \phi_{1}^{(1)} : W_{\Gamma, 1}^{(1)} 
  \longrightarrow X_{\Gamma,0}^{(1)} 
  \setminus \mathrm{Sing}(X_{\Gamma,0}^{(1)}).
  \label{eq:flow_to_X0}
\end{equation}
From 
$\Psi_{\Gamma, 1}^{(1)} = {\phi_{1}^{(1)}}^* \Psi_{\Gamma, 0}^{(1)}$
and the fact that $\Psi_{\Gamma, 1}^{(1)}$ is functionally independent,
$\Psi_{\Gamma, 0}^{(1)}$ is also functionally independent.
For each $t$,
set $W_{\Gamma, t}^{(1)} = \phi_{1-t}^{(1)} (W_{\Gamma, 1}^{(1)})$.
To show that $W_{\Gamma, t}^{(1)}$ is dense in $X_{\Gamma, t}^{(1)}$,
we consider the inverse image 
$\mathrm{Int}(X_{\Gamma, t}^{(1)}) 
= \bigl(\Psi_{\Gamma, t}^{(1)}\bigr)^{-1} 
 (\mathrm{Int}(\Delta_{\Gamma}))$ 
of the interior of $\Delta_{\Gamma}$.
Note that 
\[
  \mathrm{Int}(X_{\Gamma, t}^{(1)}) 
  \cong T^{\N} \times \mathrm{Int}(\Delta_{\Gamma})
\]
is a dense subset in $X_{\Gamma, t}^{(1)}$ on which 
$\Psi_{\Gamma,t}^{(1)}$ defines a free $T^{\N}$-action.
We also note that $\mathrm{Int}(X_{\Gamma, 0}^{(1)})$
is contained in the smooth locus of $X_{\Gamma, 0}^{(1)}$.
Since $\phi^{(1)}_t$ preserves the Hamiltonian torus actions,
we have a a symplectomorphism
$\phi_t^{(1)} : \mathrm{Int}(X_{\Gamma, t}^{(1)}) \to 
\mathrm{Int}(X_{\Gamma, 0}^{(1)})$.
Hence we have
$\mathrm{Int}(X_{\Gamma, t}^{(1)}) \subset W_{\Gamma, t}^{(1)}$,
which shows that $W_{\Gamma, t}^{(1)}$ is dense in 
$X_{\Gamma, t}^{(1)}$.

Finally we show that $\widetilde{\lambda}_{1,2}$ coincides with 
$\widetilde{\nu}_1$ on $X^{(1)}_{\Gamma,0} = X^{(2)}_{\Gamma,1}$.
Let $P= P^{(1)}_1 \cup_{d_1} P^{(1)}_2$ be the subdivision of $P$ 
given by $d_1 = \sum_{i\in I_1} e_i$ such that
$P^{(1)}_2$ is a triangle. 
Then $X_{\Gamma,0}^{(1)}$ is written as
\[
  X_{\Gamma,0}^{(1)} \cong \widetilde{\Gr}_{P^{(1)}_1} \times V_{P^{(1)}_2}
  \GIT_{(1,0)} \bC^*_0 \times \bC^*_{d_1}.
\]
From the construction, 
$\widetilde{\lambda}_{1,2}, \dots, \widetilde{\lambda}_{n-3,2}$ and
$\widetilde{\psi}_{e_i}$  ($i \in I_1$) are induced from the completely 
integrable system $\Psi_{\Gamma^{(1)}_1}$ on $\Gr_{P^{(1)}_1}$,
while $\widetilde{\psi}_{e_i}$  ($i \not\in I_1$) are induced from a natural
$T^2$-action on $V_{P^{(1)}_2} \cong \bC^3$.
Note that the moment map of the $U(n)$-action on 
$\bP(\bigwedge^2 \bC^n)$ is given by
\[
  \mu_{U(n)} ([Z_{ij}]) = \frac{| \bld r|}{ \sum_{i,j} |Z_{ij}|^2}
  \left( \sum_k Z_{ik} \overline{Z_{jk}} \right)_{i,j = 1, \dots, n}.
\]
Recall that the coordinates $Z_{ij}$ on the central fiber 
$X_{\Gamma, 0}^{(1)}$
satisfy
\[
  Z_{ij} = \begin{cases}
    Z'_{ij}, & i, j \in I_1, \\
    Z'_{i d_1} Z''_{ d_1 j}, 
      & i \in I_1, j \notin I_1,\\
    Z''_{ij}, & i, j \notin I_1,
  \end{cases}
\]
where we write $Z'_{ij} := Z_{ij}^{P_1^{(1)}}$, 
$Z''_{ij} := Z_{ij}^{P_2^{(1)}}$ for simplicity.
Then the moment map of the $U_{d_1}$-action on $X_{\Gamma, 0}^{(1)}$
is given by
\begin{align*}
  \mu_{U_{d_1}} 
  &= \frac{| \bld r|}{ \sum_{i,j} |Z_{ij}|^2}
  \left( \sum_k Z_{ik} \overline{Z_{jk}} \right)_{i,j \in I_1} \\
  &= \frac{| \bld r|}{ \sum_{i,j} |Z_{ij}|^2}
  \left( \sum_{k \in I_1} Z'_{ik} \overline{Z'_{jk}} \right)_{i,j} 
  + \frac{| \bld r| \sum_{k \notin I_{d_1}} |Z''_{d_1 k}|^2}{ \sum_{i,j} |Z_{ij}|^2} 
  \left( Z'_{i d_1} \overline{Z'_{j d_1}} \right)_{i,j},
\end{align*}
which has eigenvalues
\begin{align*}
  \widetilde{\lambda}_{1,1} 
  &= \frac{| \bld r|}{ \sum_{i,j} |Z_{ij}|^2}
    \left( \sum_{i,j \in I_1} |Z_{ij}|^2 
  + \sum_{i \in I_1, \  j \notin I_1} |Z_{ij}|^2 
  \right), \\
  \widetilde{\lambda}_{1,2} 
  &= \frac{| \bld r|}{ \sum_{i,j} |Z_{ij}|^2}
    \sum_{i,j \in I_1} |Z_{ij}|^2 .
\end{align*}
Since 
$\widetilde{\lambda}_{1,1} + \widetilde{\lambda}_{1,2}
= \tr \mu_{U_{d_1}} = \sum_{i \in I_1} \widetilde\psi_{e_i}$
and 
\[
  \widetilde{\mu}_{S^1_{d_1}}
  = \frac{| \bld r|}{ \sum_{i,j} |Z_{ij}|^2}
  \sum_{i \in I_1, \  j \notin I_1} |Z_{ij}|^2 ,
\]
we have
\[
  \widetilde{\lambda}_{1,2} = \frac 12
  \left( \sum_{i \in I_1} \widetilde\psi_{e_i} 
  - \widetilde{\mu}_{S^1_{d_1}} \right)
  = \widetilde\nu_1
\]
on $X_{\Gamma, 0}^{(1)}$.
\end{proof}

Next proposition completes the proof of Theorem \ref{th:toricdeg_nohara}.

\begin{prop}
  For each $t$, 
  $\Psi^{(\alpha)}_{\Gamma,t} : X_{\Gamma,t}^{(\alpha)} \to \bR^\N$
  is a completely integrable system.
  The gradient-Hamiltonian flow $\phi^{(\alpha)}_t$ is defined on an 
  open dense subset $W^{(\alpha)}_{\Gamma, t} \subset X^{(\alpha)}_{\Gamma, t}$ 
  and preserves the completely integrable systems.
  Furthermore, $\Psi^{(\alpha)}_{\Gamma,0}$ coincides with $\Psi^{(\alpha+1)}_{\Gamma,1}$
  on $X^{(\alpha)}_{\Gamma,0} = X^{(\alpha+1)}_{\Gamma,1}$.
\end{prop}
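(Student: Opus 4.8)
The plan is to argue by induction on $\alpha$, with the base case $\alpha=1$ being exactly Lemma~\ref{lem:ham-1st_stage}. The inductive hypothesis supplies the needed input about the starting fiber: since $X^{(\alpha)}_{\Gamma,1}=X^{(\alpha-1)}_{\Gamma,0}$ and $\Psi^{(\alpha)}_{\Gamma,1}=\Psi^{(\alpha-1)}_{\Gamma,0}$, the previous stage tells us that $\Psi^{(\alpha)}_{\Gamma,1}$ is already a completely integrable system on $X^{(\alpha)}_{\Gamma,1}$, functionally independent on an open dense subset. The structural device is Corollary~\ref{cor:alpha-th}: it identifies $\frakX^{(\alpha)}_{\Gamma}$ with a GIT quotient of $\widetilde{\frakX}^{(1)}_{\Gamma^{(\alpha-1)}_1}\times\widetilde{\Gr}_{P^{(\alpha-1)}_2}\times\cdots$, so that the $\alpha$-th stage of the degeneration of $\Gr(2,n)$ is induced from the \emph{first} stage of a degeneration of the smaller Grassmannian $\Gr_{P^{(\alpha-1)}_1}$ along $d_\alpha$, with the remaining factors and the quotient torus acting as spectators. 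This lets me reduce every assertion at stage $\alpha$ to the first-stage analysis applied to $\Gr_{P^{(\alpha-1)}_1}$.

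Poisson commutativity of the components of $\Psi^{(\alpha)}_{\Gamma,t}$ holds on each fiber exactly as in Proposition~\ref{prop:Poisson_commutative}: the subgroups $U_{d_\beta}$ ($\beta\ge\alpha$), $S^1_{d_\gamma}$ ($\gamma<\alpha$) and $S^1_{e_i}$ satisfy pairwise the nesting-or-disjointness condition of Remark~\ref{rem:subgroups}, and the functions $\widetilde\lambda_{\beta,2}$, $\widetilde\nu_\gamma$, $\widetilde\psi_{e_i}$ are $\mathrm{Ad}^*$-invariant functions pulled back by the corresponding moment maps, so Lemma~\ref{lm:involutivity1} applies. Next, the family $f^{(\alpha)}_{\Gamma}$ is invariant under all these group actions (the extension of the invariance used to build the degeneration in stages), hence its gradient-Hamiltonian flow $\phi^{(\alpha)}_t$ commutes with them and preserves $\widetilde{\Psi}^{(\alpha)}_{\Gamma}$; exactly as in Lemma~\ref{lem:ham-1st_stage} and \cite[Section 7]{Nishinou-Nohara-Ueda_TDGCSPF}, $\phi^{(\alpha)}_{t-t'}$ then intertwines $\Psi^{(\alpha)}_{\Gamma,t}$ and $\Psi^{(\alpha)}_{\Gamma,t'}$ wherever it is defined. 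Transporting the functional independence of $\Psi^{(\alpha)}_{\Gamma,1}$ through this flow yields functional independence of $\Psi^{(\alpha)}_{\Gamma,t}$ on $W^{(\alpha)}_{\Gamma,t}=\phi^{(\alpha)}_{1-t}(W^{(\alpha)}_{\Gamma,1})$, and identifies the images on these dense subsets, so that $\Psi^{(\alpha)}_{\Gamma,t}(X^{(\alpha)}_{\Gamma,t})=\Delta_\Gamma$ for every $t$ by compactness.

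The main obstacle is the density of $W^{(\alpha)}_{\Gamma,t}$, which is more delicate than in the first stage because the starting fiber $X^{(\alpha)}_{\Gamma,1}$ and the fibers $X^{(\alpha)}_{\Gamma,t}$ ($t\ne 0$) are already singular, so the flow is defined only away from $\bigcup_t\Sing(X^{(\alpha)}_{\Gamma,t})$. I would handle this by the interior argument of Lemma~\ref{lem:ham-1st_stage}: the preimage $\Int(X^{(\alpha)}_{\Gamma,t})=(\Psi^{(\alpha)}_{\Gamma,t})^{-1}(\Int\Delta_\Gamma)$ is a dense open subset carrying a free $T^{\N}$-action defined by the system, and, using the description of the singular locus in Proposition~\ref{pr:Sing_XG} and its stagewise analogue (which places $\Sing(X^{(\alpha)}_{\Gamma,t})$ over the boundary of $\Delta_\Gamma$), the interior $\Int(X^{(\alpha)}_{\Gamma,0})$ lies in the smooth locus. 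Since $\phi^{(\alpha)}_t$ preserves the Hamiltonian torus actions, it maps $\Int(X^{(\alpha)}_{\Gamma,t})$ into $\Int(X^{(\alpha)}_{\Gamma,0})$, is therefore defined on the whole interior, and gives $\Int(X^{(\alpha)}_{\Gamma,t})\subset W^{(\alpha)}_{\Gamma,t}$; density follows.

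Finally, for the coincidence $\Psi^{(\alpha)}_{\Gamma,0}=\Psi^{(\alpha+1)}_{\Gamma,1}$ on $X^{(\alpha)}_{\Gamma,0}=X^{(\alpha+1)}_{\Gamma,1}$, only the $(\alpha+1)$-st entry changes description, from $\widetilde\lambda_{\alpha+1,2}$ to $\widetilde\nu_{\alpha+1}$, and I would repeat the trace computation at the end of Lemma~\ref{lem:ham-1st_stage} on the subpolygon $P^{(\alpha)}_1$ that is further subdivided by $d_{\alpha+1}$: there $U_{d_{\alpha+1}}\times S^1_{d_{\alpha+1}}$ sits inside a unitary group whose moment map has the constant eigenvalues $|\br|,|\br|,0,\dots,0$, so that $\tr\mu_{U_{d_{\alpha+1}}}+\widetilde\mu_{S^1_{d_{\alpha+1}}}=2|\br|$ forces $\widetilde\nu_{\alpha+1}=|\br|-\widetilde\mu_{S^1_{d_{\alpha+1}}}=\widetilde\lambda_{\alpha+1,2}$. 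This completes the inductive step and hence the proof of Theorem~\ref{th:toricdeg_nohara}.
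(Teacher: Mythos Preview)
Your proposal follows essentially the same inductive strategy as the paper, with the base case Lemma~\ref{lem:ham-1st_stage} and the reduction via Corollary~\ref{cor:alpha-th} to the first stage on the smaller Grassmannian $\Gr_{P^{(\alpha-1)}_1}$. Two points deserve tightening. First, in the density argument you \emph{assert} that $\Int(X^{(\alpha)}_{\Gamma,t})$ is open dense and then invoke a ``stagewise analogue'' of Proposition~\ref{pr:Sing_XG} (which concerns only the final toric fiber $X_\Gamma$). The paper instead \emph{derives} the density of the interior directly from the factorization $X^{(\alpha)}_{\Gamma,t}\cong \widetilde{X}^{(1)}_{\Gamma^{(\alpha-1)}_1,t}\times\widetilde{\Gr}_{P^{(\alpha-1)}_2}\times\cdots\GIT(\bC^*)^\alpha$: the Hamiltonian actions of $\widetilde\lambda_{\beta,j}$ and $\widetilde\psi_{e_i}$ are induced from the integrable systems on the factors, hence defined and free on an open dense subset by Lemma~\ref{lem:ham-1st_stage}, while the $\widetilde\nu_\gamma$ give globally defined transverse $S^1$-actions; together these force $\Int(X^{(\alpha)}_{\Gamma,t})$ to be dense without any separate description of the singular locus. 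Second, there is an off-by-one slip in your last paragraph: comparing $\Psi^{(\alpha)}_{\Gamma}$ with $\Psi^{(\alpha+1)}_{\Gamma}$, it is the $\alpha$-th entry that changes, from $\widetilde\lambda_{\alpha,2}$ to $\widetilde\nu_\alpha$, and the trace computation should be carried out on $\Gr_{P^{(\alpha-1)}_1}$ with respect to the diagonal $d_\alpha$ (not $d_{\alpha+1}$ on $P^{(\alpha)}_1$). The paper simply invokes Lemma~\ref{lem:ham-1st_stage} applied to the first stage of $\Gr_{P^{(\alpha-1)}_1}$ for this step. Modulo these corrections, your argument matches the paper's.
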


\begin{proof}
We prove the proposition by induction on $\alpha$.
The case $\alpha = 1$ is proved
in Lemma \ref{lem:ham-1st_stage}.
Assume that the statement of the proposition holds
for $\Psi_{\Gamma, t}^{(\alpha-1)}$.
The same argument as in the proof of Lemma \ref{lem:ham-1st_stage}
shows that
there exists an open subset 
$W_{\Gamma, t}^{(\alpha)} \subset X_{\Gamma, t}^{(\alpha)}$ on which 
the gradient-Hamiltonian flow is defined and 
$\phi^{(\alpha)}_{t-t'} : W_{\Gamma, t}^{(\alpha)} \to W_{\Gamma, t'}^{(\alpha)}$
is a symplectomorphism preserving $\Psi^{(\alpha)}_{\Gamma,t}$.
From the hypothesis of induction, $\Phi_{\Gamma, 1}^{(\alpha)}$ is a completely 
integrable system, and hence 
$\Phi_{\Gamma, t}^{(\alpha)} 
= {\bigl( \phi^{(\alpha)}_{1-t} \bigr)^{-1}}^* \Phi_{\Gamma, 1}^{(\alpha)}$
is also a completely integrable system on $W^{(\alpha)}_{\Gamma, t}$.
We need to show is that $W_{\Gamma, t}^{(\alpha)}$ is dense in 
$X_{\Gamma, t}^{(\alpha)}$.

Let $P = P^{(\alpha-1)}_1 \cup \dots \cup P^{(\alpha-1)}_{\alpha}$ be 
the subdivision given by $d_1, \dots, d_{\alpha -1}$ and
suppose that $d_{\alpha}$ is a diagonal of $P^{(\alpha -1)}_1$.
Then we have
\[
    X_{\Gamma, t}^{(\alpha)} = 
    \widetilde{X}_{\Gamma^{(\alpha-1)}_1, t}^{(1)} \times 
    \widetilde{\Gr}_{P^{(\alpha -1)}_2} \times \dots \times 
    \widetilde{\Gr}_{P^{(\alpha -1)}_{\alpha}}
    \GIT
    \bC^*_0 \times \bC^*_{d_1} \times \dots \times \bC^*_{d_{\alpha -1}}.
\]
The functions $\widetilde{\lambda}_{\beta,j}$ ($\beta \ge \alpha$) and
$\widetilde{\psi}_{e_i}$ ($i = 1, \dots, n-1$) in $\Psi^{(\alpha)}_{\Gamma,t}$ are
induced from $\Psi^{(1)}_{\Gamma^{(\alpha-1)}_1, t}$ on 
$X_{\Gamma^{(\alpha-1)}_1, t}^{(1)}$ and 
$\Psi_{\Gamma^{(\alpha -1)}_m}$ on  $\Gr_{P^{(\alpha -1)}_m}$ ($m \ge 2$).
From Lemma \ref{lem:ham-1st_stage} and the fact that
the Hamiltonian torus action of $\Psi^{(\alpha)}_{\Gamma,t}$ and 
$\Psi_{\Gamma^{(\alpha -1)}_m}$ is defined on an open dense subset of 
$\Gr_{P^{(\alpha -1)}_m}$, 
the Hamiltonian actions of 
$\widetilde{\lambda}_{\beta,j}$ and $\widetilde{\psi}_{e_i}$
are also defined on an open subset of $X_{\Gamma, t}^{(\alpha)}$.
On the other hand, the Hamiltonian action of $\widetilde{\nu}_{\beta}$ 
is the diagonal $S^1_{d_{\beta}}$-action,
which is defined everywhere on $X_{\Gamma, t}^{(\alpha)}$ and 
transverse to the Hamiltonian actions of $\widetilde{\lambda}_{\beta,j}$
and $\widetilde{\mu}_{S^1_i}$.
In particular, $\mathrm{Int}(X_{\Gamma, t}^{(\alpha)})$, on which 
the Hamiltonian $T^{\N}$-action is free, is dense in $X_{\Gamma, t}^{(\alpha)}$,
and hence so is $W_{\Gamma, t}^{(\alpha)} \subset X_{\Gamma, t}^{(\alpha)}$.

Since
$\Psi_{\Gamma, 0}^{(\alpha)}$ is induced from the integrable systems 
on $X_{\Gamma^{(\alpha-1)}_1, 0}^{(1)}$ and $\Gr_{P^{(\alpha -1)}_m}$,
the last statement of the proposition follows from Lemma \ref{lem:ham-1st_stage}.
\end{proof}

Since the toric degeneration of
$\Psi_{\Gamma} : \Gr(2,n) \to \bR^\N$
is invariant under the action of maximal torus 
$T_{U(n)} = \prod_{i=1}^n S^1_{e_i}$,
we have the following:

\begin{cor}
  The toric degeneration 
  $(\mathfrak{X}_{\Gamma}^{(\alpha)}, \widetilde{\Psi}_{\Gamma}^{(\alpha)},
  \phi^{(\alpha)})$ of $\Psi_{\Gamma}$ induces a toric degeneration
  of the bending system on $\pol$ associated to $\Gamma$.
  In particular, $\Delta_{\Gamma}(\bld{r}) = \Phi_{\Gamma}(\pol)$ is 
  a moment polytope of the central fiber $X_{\Gamma,0} \GIT T_{U(n)}$.
\end{cor}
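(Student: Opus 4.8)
The plan is to push the entire toric degeneration of $\Psi_\Gamma$ through the symplectic reduction by $T_{U(n)}$, exploiting the fact---already invoked in the statement---that every piece of the degeneration data is $T_{U(n)}$-invariant. Recall that $T_{U(n)} = \prod_{i=1}^n S^1_{e_i}$ acts on the total space $\frakX_\Gamma^{(\alpha)}$, that the family $f_\Gamma^{(\alpha)}$ and the K\"ahler form $\widetilde{\omega}_{\br}$ are $T_{U(n)}$-invariant, and that the functions $\widetilde{\psi}_{e_1}, \dots, \widetilde{\psi}_{e_n}$ are precisely the components of the moment map $\widetilde{\mu}_{T_{U(n)}} = (\widetilde{\psi}_{e_1}, \dots, \widetilde{\psi}_{e_n})$ of this action. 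Since $\sum_i \psi_{e_i} = 2|\br|$ is constant on every fiber, preserving $\widetilde{\psi}_{e_1}, \dots, \widetilde{\psi}_{e_{n-1}}$ is equivalent to preserving the full $\widetilde{\mu}_{T_{U(n)}}$.

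First I would form the fiberwise reduction $\mathfrak{Y}_\Gamma^{(\alpha)} = \widetilde{\mu}_{T_{U(n)}}^{-1}(2\br) / T_{U(n)}$, with induced family $g_\Gamma^{(\alpha)} : \mathfrak{Y}_\Gamma^{(\alpha)} \to \bC$ whose fiber is $Y_{\Gamma,t}^{(\alpha)} = X_{\Gamma,t}^{(\alpha)} \GIT_{2\br} T_{U(n)}$; as the $T_{U(n)}$-action is fiber-preserving and the level $2\br$ is preserved, this is a flat family over $\gamma$. The K\"ahler form $\widetilde{\omega}_{\br}$ descends by K\"ahler reduction, and by Proposition \ref{prop:symp_red} the reduced form on $\Gr(2,n) \GIT_{2\br} T_{U(n)} \cong \pol$ is $\omega_\pol$. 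The functions $\widetilde{\psi}_{d_\alpha} = \widetilde{\lambda}_{\alpha,2}$ are pullbacks of $\Ad^*$-invariant functions, hence $T_{U(n)}$-invariant, so they descend to the reduced fibers; by Proposition \ref{prop:reduction_CIS} their restrictions to the top fiber recover the bending Hamiltonians $\varphi_{d_\alpha}$ up to sign and additive constant, so the reduced system there is $\Phi_\Gamma$.

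Next I would descend the gradient-Hamiltonian flow. Because $f_\Gamma^{(\alpha)}$ is $T_{U(n)}$-invariant, the normalized gradient-Hamiltonian vector field $\xi^{(\alpha)}$ is $T_{U(n)}$-invariant, and since its flow preserves the components $\widetilde{\psi}_{e_i}$ of $\widetilde{\Psi}_\Gamma^{(\alpha)}$ (together with the relation $\sum_i \widetilde{\psi}_{e_i} = 2|\br|$) it is tangent to $\widetilde{\mu}_{T_{U(n)}}^{-1}(2\br)$; hence $\xi^{(\alpha)}$ descends to a vector field $\overline{\xi}^{(\alpha)}$ on $\mathfrak{Y}_\Gamma^{(\alpha)}$, defined away from the images of the singular loci $\Sing(X_{\Gamma,t}^{(\alpha)})$. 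Its flow $\overline{\phi}^{(\alpha)}$ therefore gives symplectomorphisms between open subsets of the reduced fibers carrying one reduced integrable system to another, exactly as in the upstairs proposition just proved. The density of the domain $\overline{W}_{\Gamma,t}^{(\alpha)} \subset Y_{\Gamma,t}^{(\alpha)}$ follows from the density established upstairs, since $\Int(X_{\Gamma,t}^{(\alpha)})$ maps onto a dense open subset of $Y_{\Gamma,t}^{(\alpha)}$ on which the reduced Hamiltonian torus action is free.

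Finally, for the central fiber and the moment polytope, I would show that $Y_{\Gamma,0} = X_{\Gamma,0} \GIT_{2\br} T_{U(n)}$ is again toric: the $T_{U(n)}$-action on the toric variety $X_{\Gamma,0}$ factors through a subtorus of its big torus $\mathbb{T}$, and the symplectic reduction of a toric variety by a subtorus is toric, acted on by the quotient torus, with moment polytope the slice of $\Delta_\Gamma$ cut out by $u(e_i) = r_i$, namely $\Delta_\Gamma(\br)$. This simultaneously shows that the reduced map at $t=0$ is the toric moment map of $Y_{\Gamma,0}$ and that $\Delta_\Gamma(\br) = \Phi_\Gamma(\pol)$ is its moment polytope, giving the second assertion. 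The step I expect to be the main obstacle is the careful bookkeeping of the degeneracy loci: one must verify that $T_{U(n)}$ acts with at worst finite stabilizers on the relevant open part of $\widetilde{\mu}_{T_{U(n)}}^{-1}(2\br)$ so that the reduced fibers carry genuine K\"ahler structures, and that the union of the undefined locus of $\overline{\phi}^{(\alpha)}$ with $\Sing(Y_{\Gamma,0})$ is small enough that $\overline{\phi}^{(\alpha)}$ still identifies dense open subsets of the fibers, as required by Definition \ref{def:toric_degeneration}.
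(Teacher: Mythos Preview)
Your proposal is correct and follows exactly the approach the paper intends: the paper states this corollary without proof, merely prefacing it with the remark that the toric degeneration of $\Psi_\Gamma$ is invariant under the maximal torus $T_{U(n)} = \prod_{i=1}^n S^1_{e_i}$, and you have supplied precisely the detailed verification that this invariance allows every piece of the degeneration data (family, K\"ahler form, integrable system, gradient-Hamiltonian flow) to be pushed through the symplectic reduction at level $2\br$. Your careful treatment of the descent of $\xi^{(\alpha)}$ and of the toric structure on the reduced central fiber fills in exactly what the paper leaves implicit.
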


\begin{expl}
Let $n=5$ and assume that
the side lengths $r_1, \dots, r_5$ are close to each other
as in Example \ref{expl:pentagon}.
Then $\pol$ is isomorphic to $\mathbb{CP}^2$ blown-up
at four points in general position.
It follows from Figure \ref{fig:heptagon}
that the central fiber $X_{\Gamma, 0} \GIT T_{U(n)}$
is $\mathbb{CP}^2$ blown-up four times
at two pairs of infinitely-near points.
\end{expl}

\section{Properties of $\XG$}
 \label{sc:X_Gamma}

Let $\XG := X_{\Gamma, 0}$ be the toric variety
obtained as the central fiber
of the toric degeneration of $\Gr(2,n)$
associated with a triangulation $\Gamma$
of the reference polygon $P$.

\begin{lem}
  The torus fixed point set in the toric variety
  $\XG \subset \bP(\bigwedge^2 \bC^n)$
  consists of points $p_{kl}=[Z_{ij}]_{i,j} \in \bP(\bigwedge^2 \bC^n)$
  $(\{k,l\} \subset \{1, \dots, n\} )$
  defined by $Z_{ij}=0$   for all $\{i,j\} \ne \{k,l\}$.
  In particular, the number of fixed points is 
  $n(n-1)/2$.
\end{lem}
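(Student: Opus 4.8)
The plan is to build the argument on two facts about the dense torus $T$ of the toric variety $\XG$. Since $\XG = V_{P_1^{(n-3)}} \times \dots \times V_{P_{n-2}^{(n-3)}} \GIT (\bC^*)^{n-2}$ is cut out in $\bP(\bigwedge^2 \bC^n)$ by the Plücker coordinates, which are monomials in the torus eigen-coordinates of the triangle factors, the torus $T$ acts on $\bP(\bigwedge^2\bC^n)$ by rescaling each $Z_{ij}$; that is, $Z_{ij}$ is a $T$-eigenvector of some weight $w_{ij}$. A point $[Z]$ is then $T$-fixed exactly when all the $w_{ij}$ with $Z_{ij} \ne 0$ coincide. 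Granting this, proving the lemma reduces to (A) showing that each coordinate point $p_{kl}$ lies on $\XG$ and is $T$-fixed, and (B) showing that the weights $w_{ij}$ are pairwise distinct, so that a fixed point can have only one nonvanishing coordinate; the count $n(n-1)/2 = \binom{n}{2}$ is then just the number of index pairs $\{k,l\}$.

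For (A), I would use that $\XG$ is cut out by binomials obtained from the Plücker relations $Z_{ij}Z_{kl} - Z_{ik}Z_{jl} + Z_{il}Z_{jk}$ by discarding the term of highest $\bld{t}$-weight (Section \ref{sc:deg_in_stages}), equivalently generated by the quadrilateral relations $Z^{P_c}_{a_1 b_1} Z^{P_c}_{a_2 b_2} = Z^{P_c}_{a_1 b_2} Z^{P_c}_{a_2 b_1}$. In every such binomial each monomial is a product of two Plücker coordinates with \emph{distinct} index pairs. Since $p_{kl}$ has $Z_{ij}=0$ for all $\{i,j\}\neq\{k,l\}$, every product of two distinct Plücker coordinates vanishes at $p_{kl}$, so all defining equations hold and $p_{kl}\in\XG$. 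As $p_{kl}$ is fixed by any diagonal rescaling of coordinates, it is $T$-fixed; this produces the $\binom{n}{2}$ candidate fixed points.

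For (B), I would read off the weight $w_{ij}\in\bZ^{3(n-2)}$ from the monomial description $Z_{ij}=\prod Z^{P_m}_{ab}$ of Corollary \ref{cr:XG}, where the $Z^{P_m}_{ab}$ are the coordinates on the triangle factors $V_{P_m}\cong\bigwedge^2\bC^3$ and the product runs over the triangles $P_m$ crossed by the path $\gamma(i,j)$ joining the leaves $i$ and $j$, with $\{a,b\}$ the pair of edges of $P_m$ used by the path; thus $w_{ij}$ is the sum of the standard basis vectors indexed by these pairs $(m,\{a,b\})$. The decisive point is that a side $e_m$ of the reference polygon is a leaf of $\Gamma$, so it can be used by $\gamma(i,j)$ only where the path terminates. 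Hence the sides of $P$ occurring in the support of $w_{ij}$ are exactly $e_i$ and $e_j$ (a diagonal, by contrast, separates the two consecutive triangles it lies between and so occurs twice). Consequently $w_{ij}=w_{kl}$ forces $\{i,j\}=\{k,l\}$, the weights are pairwise distinct, and every $T$-fixed point of $\XG$ has a single nonzero Plücker coordinate, i.e.\ equals some $p_{kl}$.

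The main obstacle is the bookkeeping underlying the equivalence used in the first paragraph. One must pass carefully from the torus $(\bC^*)^{3(n-2)}$ acting on $V_{P_1}\times\dots\times V_{P_{n-2}}$ to the quotient torus $T$ of the GIT quotient $\XG$, and check that equality of the projective $T$-weights of $Z_{ij}$ and $Z_{kl}$ is equivalent to equality of the exponent vectors $w_{ij}=w_{kl}$ in $\bZ^{3(n-2)}$. This uses that all Plücker coordinates share the same weight for the group one quotients by, so that differences of weights lie in, and inject into, the character lattice of the ambient torus. Once this is in place, the genuinely substantive step is the combinatorial distinctness of the $w_{ij}$ in (B); everything else is formal.
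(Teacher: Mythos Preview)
Your proof is correct and takes a genuinely different route from the paper's. The paper argues via the general toric fact that torus fixed points are the $0$-dimensional intersections of toric divisors: it identifies the toric divisors of $\XG$ as the loci $D_{ab}^{P_m^{(n-3)}} = \{Z_{ab}^{P_m^{(n-3)}}=0\}$, observes that the image of such a divisor in $\bP(\bigwedge^2\bC^n)$ is the intersection of the coordinate hyperplanes $\{Z_{ij}=0\}$ over all $i,j$ with $\gamma(i,j)$ crossing both $a$ and $b$, and then notes that an intersection of such divisors is $0$-dimensional precisely when there remains a unique pair $(k,l)$ whose path avoids all the chosen edge-pairs. Your argument is instead purely linear-algebraic: you check directly that each $p_{kl}$ satisfies the binomial equations (since every monomial in them is a product of two Pl\"ucker coordinates with distinct index pairs), and then show that the $\widetilde T$-weights $w_{ij}$ of the $Z_{ij}$ are pairwise distinct by the clean observation that the only \emph{leaves} of $\Gamma$ appearing in the support of $w_{ij}$ are $e_i$ and $e_j$. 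Both proofs ultimately rest on the same tree combinatorics, but yours avoids invoking the structure theory of toric varieties and makes the verification that $p_{kl}\in\XG$ explicit; the paper's version is terser and leans on the divisor description already developed in Corollary~\ref{cr:XG}. Your worry about the bookkeeping is easily resolved: since all the $Z_{ij}$ carry the \emph{same} weight $(2,0,\dots,0)$ for the subgroup $\bC^*_0\times\bC^*_{d_1}\times\dots\times\bC^*_{d_{n-3}}$, this subgroup acts trivially on $\bP(\bigwedge^2\bC^n)$, so equality of projective $T$-weights is exactly equality of the exponent vectors $w_{ij}\in\bZ^{3(n-2)}$.
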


\begin{proof}
First note that any torus fixed point
in a toric variety is obtained
as an intersection of toric divisors.
It follows from the description of $\XG$
given in Corollary \ref{cr:XG}
that a toric divisor in $\XG$ is written as
$$
 D_{ab}^{P_m^{(n-3)}} = \{ Z_{ab}^{P_m^{(n-3)}} =0 \},
$$
where
$
 a, b \in  \{e_1, \dots, e_n, d_1, \dots, d_{n-3} \}
$
are a pair of edges in a triangle $P^{(n-3)}_m$
in the triangulation
$$
 P = P^{(n-3)}_1 \cup \dots \cup P^{(n-3)}_{n-2}
$$
of the reference polygon $P$.
The image of $D_{ab}^{P_m^{(n-3)}}$ in $\bP(\bigwedge^2 \bC^n)$
is given by $\bigcap_{i,j} \{Z_{ij}=0\}$, where the intersection is 
taken over all $i, j$ such that the path $\gamma(i,j)$
intersects with $a$ and $b$.
Then a torus-invariant subvariety $\bigcap D_{ab}^{P_m^{(n-3)}}$
is 0-dimensional exactly when there is a unique pair $(k,l)$
such that $\gamma(k,l)$ does not intersect 
$a, b \in \{e_1, \dots, e_n, d_1, \dots, d_{n-3} \}$ 
appearing the intersection.
\end{proof}

Suppose $|\bld{r}| = n$ so that the K\"ahler form on $\Gr(2,n)$ represents 
the first Chern class of $\Gr(2,n)$.
Then for each fixed point $p_{kl}$ in $\XG$, we have
\[
  \widetilde{\psi}_{e_i} (p_{kl}) = 
  \begin{cases}
    n & \text{if $i=k$ or $i=l$},\\
    0 & \text{otherwise}.
  \end{cases}
\]
and
\[
  \widetilde{\nu}_{\alpha} (p_{kl}) = 
  \begin{cases}
    n & \text{if $\{k, l \} \subset I_{\alpha}$}, \\
    0 & \text{otherwise}.
  \end{cases}
\]
This shows that
the vertices of the moment polytope
$\DeltaG$ of $\XG$
with respect to this symplectic form are lattice points,
so that $\DeltaG$ is an integral polytope.

\begin{df}
  A {\it reflexive polytope} $\Delta$ is an integral polytope such that
  \begin{itemize}
    \item $\Delta$ is given by 
        $\Delta = \{ u \in \bR^N \, | \, \langle v_i, u \rangle \ge -1 , \, 
        i=1, \dots, m \}$
      for some $v_1, \dots, v_m \in \mathbb{Z}^N$, where $m$ is the number of
      facets of $\Delta$, and
    \item $\Delta$ has the unique lattice point $0$ in its interior.
  \end{itemize}
\end{df}

\begin{prop}[Batyrev \cite{Batyrev_DPMS}]
The moment polytope of a polarized toric variety
is reflexive up to translation
if and only if it is a canonically-polarized
Gorenstein toric Fano variety.
\end{prop}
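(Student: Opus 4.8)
The plan is to deduce Batyrev's characterization directly from the standard dictionary between polarized toric varieties and lattice polytopes, read against the presentation of a reflexive polytope recalled just above. First I would fix notation: write $\Delta \subset \bR^N$ for the moment polytope and let $v_1, \dots, v_m \in \bZ^N$ be the primitive inner normals to its facets, so that $\Delta = \{ u \in \bR^N \mid \langle v_i, u \rangle \ge -a_i, \ i=1, \dots, m \}$. The normal fan $\Sigma$ of $\Delta$ is the fan of $X$, the $v_i$ are precisely the primitive generators of its rays, and the polarization is $L = L_\Delta = \mathcal{O}_X(\sum_i a_i D_i)$, where $D_i$ is the torus-invariant prime divisor attached to $v_i$. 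Since $\Delta$ is integral, each facet contains a lattice point $u_0$, whence $a_i = -\langle v_i, u_0 \rangle \in \bZ$, so all $a_i \in \bZ_{>0}$ once $0$ is interior. I would also record the two facts I rely on: the toric anticanonical divisor is $-K_X = \sum_i D_i$, and $X$ is Gorenstein exactly when $-K_X$ is Cartier.

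For the implication that a canonically-polarized Gorenstein Fano variety has reflexive moment polytope, I would argue as follows. From $L = -K_X = \sum_i D_i$, comparison with $L_\Delta = \mathcal{O}(\sum_i a_i D_i)$ forces $a_i = 1$ for all $i$; the Gorenstein (Cartier) hypothesis makes the polytope $\{ u \mid \langle v_i, u \rangle \ge -1 \}$ genuinely integral, so $\Delta$ has exactly the defining inequalities of the reflexive presentation. Since $\langle v_i, 0 \rangle = 0 > -1$, we get $0 \in \Int(\Delta)$. For uniqueness, suppose $u \in \Int(\Delta) \cap \bZ^N$; then $\langle v_i, u \rangle > -1$ with $\langle v_i, u \rangle \in \bZ$, hence $\langle v_i, u \rangle \ge 0$ for every $i$. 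As $X$ is projective, $\Sigma$ is complete, so the $v_i$ positively span $\bR^N$: writing an arbitrary $v \in \bR^N$ as $\sum_i c_i v_i$ with $c_i \ge 0$ gives $\langle v, u \rangle \ge 0$, and applying this to both $v$ and $-v$ forces $\langle v, u \rangle = 0$ for all $v$, i.e. $u = 0$. Hence $\Delta$ is reflexive.

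For the converse, I would translate $\Delta$ so that $0$ is its unique interior lattice point, a translation changing neither $X$ nor $L$. Reflexivity then gives $\Delta = \{ u \mid \langle v_i, u \rangle \ge -1 \}$, so all $a_i = 1$ and $L_\Delta = \mathcal{O}(\sum_i D_i) = -K_X$, i.e. the polarization is the anticanonical one. Because $\Delta$ is a lattice polytope, its support function is integral linear on each maximal cone of $\Sigma$ (the associated vertex of $\Delta$ being a lattice point), so $-K_X = D_\Delta$ is Cartier and $X$ is Gorenstein; and $L_\Delta$ is ample since $\Delta$ is a full-dimensional lattice polytope, so $X$ is Fano. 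This settles both directions.

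The only genuinely delicate point, I expect, is the uniqueness of the interior lattice point in the first direction: it is exactly there that completeness of $\Sigma$ (equivalently projectivity of $X$) and the integrality $\langle v_i, u \rangle \in \bZ$ combine to pin down $u = 0$, rather than merely $0 \in \Int(\Delta)$. Everything else is careful bookkeeping within the fan/polytope dictionary, where the two facts to invoke accurately are the integrality of the $a_i$ for a lattice polytope and the support-function criterion for $-K_X$ to be Cartier.
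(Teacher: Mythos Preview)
Your argument is correct and follows the standard dictionary between lattice polytopes and polarized toric varieties. However, note that the paper does not supply its own proof of this proposition: it is stated as a result of Batyrev with a citation to \cite{Batyrev_DPMS} and used as a black box in the subsequent proof of Proposition~\ref{cor:Fano}. So there is no paper proof to compare against; your proposal simply fills in a proof the authors chose to cite rather than reproduce.

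One minor comment on your write-up: in the forward direction, the step ``comparison with $L_\Delta = \mathcal{O}_X(\sum_i a_i D_i)$ forces $a_i = 1$'' tacitly uses that the divisor $\sum_i a_i D_i$ representing $L$ is determined only up to linear equivalence (equivalently, up to translation of $\Delta$). You handle this correctly since the statement is ``reflexive up to translation,'' but it would be cleaner to say: the polytope of $-K_X$, in its canonical normalization, is $\{u \mid \langle v_i, u\rangle \ge -1\}$, and any other moment polytope for the same polarization differs from this by an integral translation. Otherwise the argument is complete.
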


\begin{prop} \label{cor:Fano}
  If $|\bld{r}| = n$ then $\Delta_{\Gamma}$ is a 
  reflexive polytope up to translation by an integral vector.
  Hence $\XG$ is a Gorenstein toric Fano variety.
\end{prop}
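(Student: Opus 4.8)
The plan is to exhibit $\DeltaG$, after an integral translation, in the form $\{u \mid \langle v_i, u\rangle \ge -1\}$ with $v_i \in \bZ^\N$ and with the origin as its unique interior lattice point, and then to invoke Batyrev's criterion. Since the discussion preceding the definition of a reflexive polytope already shows that $\DeltaG$ is an integral (full-dimensional) polytope when $|\bld{r}| = n$, the substance lies in locating the interior lattice point and checking that every facet sits at lattice distance one from it.

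First I would single out the candidate center $\bld{u}_0 \in \DeltaG$ as the point at which all length functions take the value one, $u(a) = 1$ for every edge $a \in \{e_1, \dots, e_n, d_1, \dots, d_{n-3}\}$. By \eqref{eq:length_fn} this is the point $u_{e_i} = 2$ ($i = 1, \dots, n-1$) and $u_{d_\alpha} = |I_\alpha| - 1$, which is a lattice point of $\bZ^\N$. This is exactly where the hypothesis $|\bld{r}| = n$ enters: the identity $\sum_{i=1}^n u(e_i) = |\bld{r}|$ implicit in \eqref{eq:length_fn} forces $u(e_n) = |\bld{r}| - (n-1)$ at this point, so all side lengths equal one precisely when $|\bld{r}| = n$. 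Because every triangle inequality \eqref{eq:triangle} holds strictly at $\bld{u}_0$, where it reads $|1-1| = 0 < 1 < 2 = 1+1$, the point $\bld{u}_0$ lies in the interior of $\DeltaG$.

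Next I would translate the center to the origin, $\hat u = u - \bld{u}_0$, an integral translation since $\bld{u}_0 \in \bZ^\N$, and rewrite each facet inequality. Fixing a triangle with edges $a, b, c$ and orienting the index sets as in the proof of Proposition \ref{pr:piecewise-linear} so that $I_c = I_a \sqcup I_b$, I set $v(x) = \sum_{i \in I_x} u(e_i)$, giving $v(c) = v(a) + v(b)$. Using the observation from that proof that $u(x) \pm v(x)$ has integral coefficients in $(u_{e_i}, u_{d_\alpha})$ — whence $2v(x) = (u(x)+v(x)) - (u(x)-v(x)) \in \bZ[u_{e_i}, u_{d_\alpha}]$ — one computes
\begin{align*}
  u(a) + u(b) - u(c) &= (u(a)-v(a)) + (u(b)-v(b)) - (u(c)-v(c)), \\
  -u(a) + u(b) + u(c) &= -(u(a)+v(a)) + (u(b)-v(b)) + (u(c)-v(c)) + 2v(c), \\
  u(a) - u(b) + u(c) &= (u(a)-v(a)) - (u(b)+v(b)) + (u(c)-v(c)) + 2v(c),
\end{align*}
each of which is an integer-coefficient affine function of $(u_{e_i}, u_{d_\alpha})$ taking the value $1$ at $\bld{u}_0$. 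Hence in the coordinates $\hat u$ every facet inequality of $\DeltaG$ reads $\langle v_i, \hat u\rangle \ge -1$ with $v_i \in \bZ^\N$; since $\DeltaG$ is integral this forces the primitive inner normal of each facet to lie at lattice distance exactly one.

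Finally, uniqueness of the interior lattice point is automatic: if $w \in \bZ^\N$ lies in the interior of $\DeltaG - \bld{u}_0$ then $\langle v_i, w\rangle > -1$, hence $\langle v_i, w\rangle \ge 0$ for all $i$, so $kw$ satisfies every inequality for all $k \ge 0$, contradicting compactness unless $w = 0$. Thus $\DeltaG$ is reflexive up to the integral translation $\bld{u}_0$, and Batyrev's criterion identifies $\XG$ as a Gorenstein toric Fano variety. I expect the genuinely delicate point — and the step carrying the weight of the argument — to be the integrality check in the third paragraph: the length coordinates carry half-integer coefficients through \eqref{eq:length_fn}, so integral normals emerge only through the cancellation forced by the triangle relation $I_c = I_a \sqcup I_b$ together with $2|\bld{r}| \in \bZ$, precisely the bookkeeping (including the orientation choices for the $I_\alpha$) already handled in Proposition \ref{pr:piecewise-linear}.
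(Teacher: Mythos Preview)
Your proof is correct and follows the same skeleton as the paper's: translate by the point $\bld u_0$ where every length function equals $1$, verify that each triangle inequality becomes $\langle v_i,\hat u\rangle \ge -1$ with $v_i\in\bZ^\N$, and check that the origin is the unique interior lattice point. The execution of the two verification steps, however, is genuinely different from the paper's.

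For the integrality of the facet normals, the paper proceeds by an explicit case analysis according to how many edges of the triangle are sides of $P$ (its Steps~1--3), writing out the inequalities in each case. You instead recycle the observation from Proposition~\ref{pr:piecewise-linear} that $u(x)\pm v(x)$ are integer affine functions (for either orientation of $I_x$, since $|\bld r|\in\bZ$), and use the relation $v(c)=v(a)+v(b)$ to handle all triangle types uniformly in three lines. This is cleaner and makes the role of the hypothesis $|\bld r|\in\bZ$ transparent. For the uniqueness of the interior lattice point, the paper gives a combinatorial argument (its Step~4): it first deduces $u'_{e_i}\ge 0$ from the inequalities and uses $\sum u'_{e_i}=0$ to pin down the side variables, then peels off triangles one at a time to force each $u(d_\alpha)=1$. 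Your compactness argument (if $w\ne 0$ is an interior lattice point then $\langle v_i,w\rangle\ge 0$ for all $i$, so the ray $\{kw:k\ge 0\}$ stays in $\DeltaG$) is the standard one-line proof that works for any polytope presented in this form; the paper's argument is more hands-on but yields the same conclusion. Both approaches are valid; yours is shorter and avoids the case analysis, while the paper's makes the normal vectors completely explicit, which is convenient later when writing down the potential function.
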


\begin{proof}
Set $u'_{e_i} = u_{e_i} -2$ for each side $e_i$ and
$u'_{d_{\alpha}} = u_{d_{\alpha}} + 1 - |I_{\alpha}|$
for a diagonal $d_{\alpha} = \sum_{i \in I_{\alpha}} e_i$.
Then $(u'_a)_a = 0$ is equivalent to $u(a) = 1$ for all $a$,
where $u(a)$ are the coordinates defined by (\ref{eq:length_fn})
corresponding to lengths of sides and diagonals.
Recall that $\Delta_{\Gamma}$ is defined by triangle inequalities
$|u(a) - u(b)| \le u(c) \le u(a) + u(c)$
for each triangle in the triangulation $\Gamma$.
Then $(u'_a)_a = 0$ defines an interior point in $\Delta_{\Gamma}$.
We have to show that
\begin{itemize}
 \item
the triangle inequalities have the form
$\langle v, u' \rangle \ge -1$
for some integral vector $v \in \bZ^\N$, and
 \item
$(u'_a)_a = 0$ is the unique interior lattice point
in $\Delta_{\Gamma}$.
\end{itemize}
 We divide the proof into steps:

\begin{step}
The triangle inequality associated with a triangle
consisting of two edges $e_i, e_{i+1}$ and  a diagonal $d_{\alpha}$
has the form
$
 \langle v, u' \rangle \ge -1
$
for some integral vector $v \in \bZ^\N$.
\end{step}

In this case, one has $I_{\alpha} = \{i, i+1\}$ or 
$I_{\alpha} = \{1, \dots, n\} \setminus \{i, i+1\}$
depending on the orientation of $d_{\alpha}$.
The triangle inequalities in the first case are given by
\[
  \frac 12 |u'_{e_i} - u'_{e_{i+1}} | \le 
  \frac 12 (u'_{e_i} + u'_{e_{i+1}}) - u'_{\alpha,2} +1
  \le \frac 12 (u'_{e_i} + u'_{e_{i+1}}) +2,
\]
and these are equivalent to 
\begin{equation}
  u'_{d_{\alpha}} \ge -1, \quad
  u'_{e_i} - u'_{d_{\alpha}} \ge -1, \quad
  u'_{e_{i+1}} - u'_{d_{\alpha}} \ge -1.
  \label{eq:reflexive1}
\end{equation}
Similarly, the triangle inequalities for the second case are
\begin{equation}
  u'_{e_i} + u'_{e_{i+1}} + u'_{d_{\alpha}} \ge -1, \quad
  -u'_{e_i} - u'_{d_{\alpha}} \ge -1, \quad
  -u'_{e_{i+1}} - u'_{d_{\alpha}} \ge -1.
  \label{eq:reflexive1'}
\end{equation}
  
\begin{step}
The triangle inequality associated with a triangle
consisting of two diagonals 
$d_{\alpha}$, $d_{\beta}$, and a side $e_j$
has the form
$\langle v, u' \rangle \ge -1$
for some integral vector $v \in \bZ^\N$.
\end{step}

We may assume that $I_{\beta} = I_{\alpha} \cup \{ j\}$, or
$I_{\alpha} \cup I_{\beta} = \{1, \dots, n\} \setminus \{ j \}$ and 
$I_{\alpha} \cap I_{\beta} = \emptyset$, depending on the choice of 
orientations of the diagonals.
In the first case we have $d_{\beta} = d_{\alpha} + e_j$, and hence
the triangle inequalities are 
\[
  |u'_{d_{\alpha}} - u'_{d_{\beta}} + \frac 12 u'_{e_j} |
  \le \frac 12 u'_{e_j} + 1 \le 
  \sum_{i \in I_{\alpha}} u'_{e_i} + \frac 12 u'_{e_j}
  - u'_{d_{\alpha}} - u'_{d_{\beta}} + 2,
\]
which are equivalent to
\begin{equation}
  u'_{d_{\beta}} - u'_{d_{\alpha}} \ge -1, \quad
  u'_{e_j} + u'_{d_{\alpha}} - u'_{d_{\beta}} \ge -1, \quad
  \sum_{i \in I_{\alpha}} u'_{e_i} - u'_{d_{\alpha}} - u'_{d_{\beta}} \ge -1.
  \label{eq:reflexive2}
\end{equation}
The triangle inequalities for the second case are 
\begin{equation}
  \begin{split}
    -u'_{e_j} -u'_{d_{\alpha}} - u'_{d_{\beta}} &\ge -1,\\
    -u'_{d_{\alpha}} + u'_{d_{\beta}} + u'_{e_j} 
      + \sum_{i \in I_{\alpha}} u'_{e_i} &\ge -1, \\
    u'_{d_{\alpha}} - u'_{d_{\beta}}  
      - \sum_{i \in I_{\alpha}} u'_{e_i} &\ge -1.
  \end{split}
  \label{eq:reflexive2'}
\end{equation}

\begin{step}
The triangle inequality associated with a triangle
consisting of three diagonals 
$d_{\alpha}, d_{\beta}, d_{\gamma} = d_{\alpha} + d_{\beta}$
has the form
$
 \langle v, u' \rangle \ge -1
$
for some integral vector $v \in \bZ^\N$.
\end{step}

In this case,
we have 
\begin{align*}
  \sum_{i \in I_{\alpha}} u'_{e_i} 
    - u'_{d_{\alpha}} + u'_{d_{\beta}} + u'_{d_{\gamma}} &\ge -1,\\
  \sum_{i \in I_{\beta}} u'_{e_i} 
    + u'_{d_{\alpha}} + u'_{d_{\beta}} - u'_{d_{\gamma}} &\ge -1,\\
  - u'_{d_{\alpha}} - u'_{d_{\beta}} + u'_{d_{\gamma}} &\ge -1.
\end{align*}
If the orientations of $d_{\alpha}$, $d_{\beta}$, $d_{\gamma}$ 
are chosen in such a way that $d_{\alpha} + d_{\beta} + d_{\gamma} = 0$,
then $I_{\alpha} \cup I_{\beta} \cup I_{\gamma} = \{1,\dots, n\}$,
and hence the triangle inequalities are
\begin{align*}
  u'_{d_{\alpha}} - u'_{d_{\beta}} - u'_{d_{\gamma}} 
        - \sum_{i \in I_{\alpha}} u'_{e_i} &\ge -1,\\
  - u'_{d_{\alpha}} + u'_{d_{\beta}} - u'_{d_{\gamma}} 
        - \sum_{i \in I_{\beta}} u'_{e_i} &\ge -1,\\
  - u'_{d_{\alpha}} - u'_{d_{\beta}} + u'_{d_{\gamma}} 
        - \sum_{i \in I_{\gamma}} u'_{e_i} &\ge -1,
\end{align*}
as desired.

\begin{step}
$(u'_a)_a = 0$ is the unique interior lattice point 
in $\DeltaG$. 
\end{step}

Let $(u'_a)$ be an interior lattice point in $\DeltaG$.
For a triangle consisting of two edges $e_i, e_{i+1}$ and 
a diagonal $d_{\alpha}$, (\ref{eq:reflexive1}) 
or (\ref{eq:reflexive1'}) implies that $u'_{e_i}$, $u'_{e_{i+1}}$ and
$u'_{d_{\alpha}}$ satisfy
\[
  u'_{d_{\alpha}} \ge 0, \quad
  u'_{e_i} - u'_{d_{\alpha}} \ge 0, \quad
  u'_{e_{i+1}} - u'_{d_{\alpha}} \ge 0,
\]
or 
\[
  u'_{e_i} + u'_{e_{i+1}} + u'_{d_{\alpha}} \ge 0, \quad
  -u'_{e_i} - u'_{d_{\alpha}} \ge 0, \quad
  -u'_{e_{i+1}} - u'_{d_{\alpha}} \ge 0.
\]
Then we have $u'_{e_i} , u'_{e_{i+1}} \ge 0$ in either case.
Similarly, for a triangle consisting two diagonals $d_{\alpha}$,
$d_{\beta}$ and one side $e_j$,
we obtain $u'_{e_j} \ge 0$
from \eqref{eq:reflexive2}
or \eqref{eq:reflexive2'}.
Hence $u'_{e_i} \ge 0$  for all sides $e_1, \dots, e_n$.
Combining this with $\sum_{i=1}^n u'_{e_i} = 0$, 
we have $u'_{e_i}=0$, or equivalently, $u(e_i) = 1$ for all $e_1, \dots, e_n$.
Note that the coordinate change 
$(u'_{d_{\alpha}})_{\alpha} \mapsto (u(d_{\alpha}))_{\alpha}$ 
restricted to $u'_{e_1} = \dots = u'_{e_n} = 0$ is defined over 
$\mathbb{Z}$.
In particular, $(u'_{d_{\alpha}}) \in \mathbb{Z}^{n-3}$ if and only if
$(u(d_{\alpha}))_{\alpha} \in \mathbb{Z}^{n-3}$.
We take a triangle $P_1$ consisting of two sides $e_i, e_{i+1}$ and 
a diagonal $d_{\alpha}$.
The triangle inequalities
$0 = |u(e_i) - u(e_{i+1})| < u(d_{\alpha})
< u(e_i) + u(e_{i+1})=2$ implies that 
$u(d_{\alpha}) =1$, or equivalently $u'_{d_{\alpha}}=0$.
Then the $(n-1)$-gon $P \setminus P_1$ also has edges with unit lengths.
By repeating this process,
we obtain $u'_a=0$ for all $a$.
\end{proof}

Let
$$
 \YG = \Grtilde_{P^{(n-3)}_1} \times \dots \times \Grtilde_{P^{(n-3)}_{n-2}}
  \GIT_{(1, 1, \dots, 1)}  \bCx_0
   \times \bCx_{d_1} \times \dots \times \bCx_{d_{n-3}}
$$
be the symplectic reduction of
$
 \Grtilde_{P^{(n-3)}_1} \times \dots \times \Grtilde_{P^{(n-3)}_{n-2}}
$
at level $(1, 1, \dots, 1)$.
Since $\XG$ is the symplectic reduction
of the same space
at level $(1, 0, \dots, 0)$,
there there is a natural map $\pi : \YG \to \XG$.

\begin{prop} \label{prop:small_resol}
$\pi : \YG \to \XG$ is a small resolution.
\end{prop}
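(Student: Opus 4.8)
The plan is to recognise $\pi$ as a morphism arising from variation of GIT. By Corollary \ref{cr:XG} both $\XG$ and $\YG$ are GIT quotients of the \emph{same} affine space $W = V_{P^{(n-3)}_1} \times \dots \times V_{P^{(n-3)}_{n-2}} \cong \bC^{3(n-2)}$ by the \emph{same} torus $T = \bCx_0 \times \bCx_{d_1} \times \dots \times \bCx_{d_{n-3}}$; they differ only in the stability parameter, namely $(1,0,\dots,0)$ for $\XG$ and $(1,1,\dots,1)$ for $\YG$. The parameter $(1,1,\dots,1)$ lies in the interior of a GIT chamber (all the diagonal weights are nonzero), while $(1,0,\dots,0)$ lies on the wall where every $\bCx_{d_\alpha}$-weight vanishes. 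First I would record the standard consequences: the $(1,1,\dots,1)$-semistable locus is contained in the $(1,0,\dots,0)$-semistable locus, so that the given $\pi$ is projective and birational, being an isomorphism over the common stable locus.

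The heart of the argument is a local analysis built on Proposition \ref{pr:Sing_XG}. Near a point of the stratum $\Sing(\XG)_\alpha$ the variety $\XG$ is modelled, after discarding the directions on which $T$ acts freely, on the affine quotient $V_{P_a} \times V_{P_b} \bigGIT_0 \bCx_{d_\alpha}$, whose defining equation was computed above to be $Z_{a_1 b_1}^{P_c} Z_{a_2 b_2}^{P_c} = Z_{a_1 b_2}^{P_c} Z_{a_2 b_1}^{P_c}$; this is the product of the three-dimensional conifold with the smooth factor spanned by $Z_{a_1 a_2}^{P_a}$ and $Z_{b_1 b_2}^{P_b}$. The key input is then the classical description of the conifold as the $\bCx$-quotient of $\bC^4$ at level $0$ with weights $(+1,+1,-1,-1)$ (here realised by the $\bCx_{d_\alpha}$-action on $Z_{a_1 d}^{P_a}, Z_{a_2 d}^{P_a}, Z_{b_1 d}^{P_b}, Z_{b_2 d}^{P_b}$): passing to a nonzero level replaces the conifold by one of its two small resolutions, the total space of $\mathcal{O}(-1)^{\oplus 2}$ over $\bP^1$, whose exceptional set is a single $\bP^1$. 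Thus over each component $\Sing(\XG)_\alpha$ the map $\pi$ is, fibrewise, exactly this small resolution, the choice being dictated by the sign of the $\alpha$-th stability parameter.

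From this local model both required properties follow. For smoothness of $\YG$ I would verify that the $T$-action on the $(1,1,\dots,1)$-semistable locus is free; equivalently, by the local computation the model near any point, including a point lying on several strata $\Sing(\XG)_{\alpha_1}, \dots, \Sing(\XG)_{\alpha_k}$, is a product of $k$ conifold small resolutions with a smooth factor, hence smooth. For smallness, I would observe that $\pi$ is an isomorphism over the smooth locus of $\XG$, so the exceptional locus maps into $\Sing(\XG)$, which has codimension $3$ in $\XG$ by Proposition \ref{pr:Sing_XG}; a general fibre of $\pi$ over $\Sing(\XG)$ is the resolving $\bP^1$, so the exceptional locus has dimension one greater than $\Sing(\XG)$, hence codimension $2$ in $\YG$. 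Since $\pi$ therefore contracts no divisor, it is a small resolution.

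I expect the main obstacle to be the global smoothness of $\YG$: one must be sure that the independently chosen small resolutions of the several conifold strata fit together into a smooth variety, with no further singularities at the intersections of strata or from the ambient torus action. This is precisely the freeness of the $T$-action at the generic parameter $(1,1,\dots,1)$, and it is the point where the compatibility of the weight data encoded in the triangulation $\Gamma$ enters in an essential way.
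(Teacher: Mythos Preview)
Your plan is correct and lands on the same two facts the paper uses: smoothness of $\YG$ comes from freeness of the torus action at the level $(1,1,\dots,1)$, and smallness comes from the exceptional locus sitting over $\Sing(\XG)$ in codimension at least two. The paper's proof is simply the direct verification you say you would do: it writes down the moment map
\[
\mu_{S^1_{d_\alpha}} = \tfrac12\Bigl(\bigl|Z_{a_1 d_\alpha}^{P_a}\bigr|^2 + \bigl|Z_{a_2 d_\alpha}^{P_a}\bigr|^2 - \bigl|Z_{b_1 d_\alpha}^{P_b}\bigr|^2 - \bigl|Z_{b_2 d_\alpha}^{P_b}\bigr|^2\Bigr),
\]
observes that on $\mu_{S^1_{d_\alpha}}^{-1}(1)$ at least one coordinate with weight $+1$ is nonzero, hence each $S^1_{d_\alpha}$ acts freely there, and concludes smoothness of the reduction. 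It then identifies the non-isomorphism locus of $\pi$ as $\{Z_{b_1 d_\alpha}^{P_b}=Z_{b_2 d_\alpha}^{P_b}=0\}$, of codimension two in $\YG$, mapping into $\Sing(\XG)$.

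What you add is the variation-of-GIT framing and the explicit identification of the local model with the conifold resolution $\mathcal{O}(-1)^{\oplus 2}\to\bP^1$; the paper does not invoke either and works purely by the moment-map calculation. Your framing makes the geometry more transparent and explains \emph{why} the resolution is small, whereas the paper's argument is shorter but more opaque. Note also that your worry about intersections of strata is handled for free by the freeness check: since each $S^1_{d_\alpha}$ acts freely on its own level set $\mu_{S^1_{d_\alpha}}^{-1}(1)$ independently of the other factors, the full torus acts freely on the intersection, so no separate analysis at the crossings is needed.
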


\begin{proof}
We first show that $\YG$ is smooth.
Recall that the moment map of the $S^1_{d_{\alpha}}$-action on 
$
 \Grtilde_{P^{(n-3)}_1} \times \dots \times \Grtilde_{P^{(n-3)}_{n-2}}
$
is given by
$$
 \mu_{S^1_{d_\alpha}} = \frac 12 \left(
  \left| Z_{a_1 d_{\alpha}}^{P_a} \right|^2
   + \left| Z_{a_2 d_{\alpha}}^{P_a} \right|^2
   - \left| Z_{b_1 d_{\alpha}}^{P_b} \right|^2
   - \left| Z_{b_2 d_{\alpha}}^{P_b} \right|^2
  \right).
$$
Then one can see that the $S^1_{d_{\alpha}}$-action
on $\mu_{S^1_{d_\alpha}}^{-1}(1)$ is free,
so that the symplectic reduction 
$$
 Y_{\Gamma} \cong 
  \mu_{S^1_0}^{-1}(1) \cap \mu_{S^1_{d_1}}^{-1} (1)
   \cap \dots \cap \mu_{S^1_{d_{n-3}}}^{-1} (1) /
   S^1_0 \times S^1_{d_1} \times \dots \times S^1_{d_{n-3}}
$$
is smooth.

The natural morphism
$\pi : \YG \to \XG$
sends a point $[y] \in \YG$ for
$$
 y \in
  \mu_{S^1_0}^{-1}(1) \cap \mu_{S^1_{d_1}}^{-1} (1)
   \cap \dots \cap \mu_{S^1_{d_{n-3}}}^{-1} (1)
$$
to $[x] \in \XG$
where $x$ is a point
$$
 x \in \overline{O_y} \cap \left(
 \mu_{S^1_0}^{-1}(1) \cap \mu_{S^1_{d_1}}^{-1} (0)
  \cap \dots \cap \mu_{S^1_{d_{n-3}}}^{-1} (0).
 \right)
$$
in the intersection of the closure of the
$\bCx_0 \times \bCx_{d_1} \times \dots \times \bCx_{d_{n-3}}$-orbit
$$
 O_y \subset 
  \Grtilde_{P^{(n-3)}_1} \times \dots \times \Grtilde_{P^{(n-3)}_{n-2}}
$$
of $y$.
One can see that
$$
 E_\Gamma = \{ [y] \in \YG \mid 
  Z_{b_1 d_{\alpha}}^{P_b}
  = Z_{b_2 d_{\alpha}}^{P_b}
  = 0 \text{ for some triangle $P_b$} \}.
$$
is the exceptional set of $\pi$;
the morphism $\pi$ is an isomorphism
outside $E_\Gamma$ since
$O_y$ is closed if $y \not \in E_\Gamma$,
and $\pi$ is not an isomorphism at $E_\Gamma$
since $\pi(y) \in \XG$ for $y \in E_\Gamma$ is singular
by Proposition \ref{pr:Sing_XG},
whereas $\YG$ is smooth everywhere.

Since $E \subset \YG$ is of codimension two,
the exceptional locus of $\pi$ does not contain a divisor,
and Proposition \ref{prop:small_resol} is proved.
\end{proof}

\begin{rem}
  Proposition \ref{cor:Fano} and Proposition \ref{prop:small_resol} are
  not true for toric degenerations of polygon spaces in general.
  Indeed, the triangulation $\Gamma_2$ in Example \ref{expl:pentagon} 
  gives a degeneration of the space $\pol$ of pentagons into the
  Hirzebruch surface $F_2$ of degree 2, which is not Fano.
  If we further assume that $r_1 = r_2$, then the central fiber is 
  the weighted projective plane $\bP (1,1,2)$, whose minimal resolution
  $F_2 \to \bP (1,1,2)$ is not small.
\end{rem}

\section{Potential functions}
 \label{sc:potential_function}

For a Lagrangian submanifold $L$ in a symplectic manifold,
the cohomology group $H^*(L; \Lambda_0)$
has a structure of a weak $A_\infty$-algebra
\cite{Fukaya-Oh-Ohta-Ono},
where  $\Lambda_0$ is the Novikov ring
$$
 \Lambda_0 = \left\{ \left.
  \sum_{i=0}^\infty a_i T^{\lambda_i}
  \right| a_i \in \bQ, \ \lambda_i \in \bR_{\ge 0}, \ 
   \lim_{i \to \infty} \lambda_i = \infty \right\}.
$$
A solution to the Maurer-Cartan equation
$$
 \sum_{k=0}^\infty \frakm_k(b, \dots, b) \equiv 0
  \mod \PD([L])
$$
is called a weak bounding cochain,
where $\PD ([L])$ is the Poincar\'e dual
of the fundamental class $[L]$.
The potential function is a map
$
 \po : \scM(L) \to \Lambda_0
$
from the moduli space $\scM(L)$ of weak bounding cochains 
defined by
$$
 \sum_{k=0}^\infty \frakm_k(b, \dots, b) = \po(b) \cdot \PD([L]).
$$
Cho and Oh \cite{Cho-Oh}
and
Fukaya, Oh, Ohta and Ono \cite{FOOO_toric_I}
computed the potential functions for Lagrangian torus orbits 
in toric manifolds.
This is generalized
in \cite{Nishinou-Nohara-Ueda_TDGCSPF,
Nishinou-Nohara-Ueda_PFTD}
to an integrable system on a Fano manifold
which has a degeneration
into the toric moment map on a toric Fano variety
admitting a small resolution.
From Corollary \ref{cor:Fano} and Proposition \ref{prop:small_resol}, 
we can apply this result
to the toric degeneration of the integrable system $\Psi_\Gamma$
on $\Gr(2,n)$
to obtain the following:

\begin{thm} \label{th:potential}
  Fix a triangulation $\Gamma$ of the reference polygon and
  let $\ell_i(u) = \langle v_i, u \rangle - \tau_i$
  be the affine functions defining $\Delta_{\Gamma}:$
  \[
    \Delta_{\Gamma}
    = \{ u \in \bR^\N \mid \ell_i(u) \ge 0, \ i = 1, \dots, m \}.
  \]
  Then for any $u \in \Int \Delta_{\Gamma}$, one has an inclusion
  $H^1(L(u); \Lambda_0) \subset \mathcal{M}(L(u))$
  for the Lagrangian torus fiber $L(u) = \Psi_{\Gamma}^{-1}(u)$,
  and the potential function is given by
  \[
    \po_{\Gamma} (L(u), x) 
    = \sum_{i=1}^m e^{\langle v_i, x \rangle} T^{\ell_i(u)}
  \]
  for $x \in H^1(L(u), \Lambda_0) \cong \Lambda_0^\N$.
\end{thm}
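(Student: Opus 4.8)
The plan is to deduce the statement from the general machinery of \cite{Nishinou-Nohara-Ueda_TDGCSPF, Nishinou-Nohara-Ueda_PFTD}, which computes the potential function of a Lagrangian torus fiber of an integrable system on a Fano manifold admitting a toric degeneration whose central fiber is a toric Fano variety with a small resolution. All three hypotheses of that framework have now been established in the present setting: the toric degeneration of $\Psi_\Gamma$ is provided by Theorem \ref{th:toricdeg_nohara}, the central fiber $\XG$ is a Gorenstein toric Fano variety by Proposition \ref{cor:Fano}, and it admits a small resolution $\pi : \YG \to \XG$ by Proposition \ref{prop:small_resol}. Thus the theorem follows by invoking this result, and the substance of the argument lies in matching the hypotheses and reading off what the conclusion asserts in our coordinates, where the facets $\ell_i \ge 0$ are exactly the triangle inequalities \eqref{eq:triangle} defining $\DeltaG$.

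First I would use the composite gradient-Hamiltonian flow of Section \ref{sc:deg_CIS} to transport the Lagrangian torus fiber $L(u) = \Psi_\Gamma^{-1}(u)$, for $u \in \Int \DeltaG$, to a Lagrangian torus orbit $L_0(u) \subset \XG$ lying over the same interior point. Since $u$ is interior, $L(u)$ sits in the open dense locus on which the flow is a symplectomorphism intertwining the integrable systems, and $L_0(u)$ lies in $\Int(\XG)$, which is contained in the smooth toric locus. Consequently the identification $H^1(L(u);\Lambda_0) \cong \Lambda_0^\N$ is the standard one for a torus orbit, and because the potential function is invariant under this symplectomorphism, computing $\po_\Gamma(L(u), x)$ reduces to the toric disk count bounded by $L_0(u)$.

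Next I would apply the results of Cho and Oh \cite{Cho-Oh} and Fukaya, Oh, Ohta and Ono \cite{FOOO_toric_I}: on a smooth toric Fano manifold the holomorphic disks of Maslov index two correspond bijectively to the facets of the moment polytope, each contributing $e^{\langle v_i, x\rangle} T^{\ell_i(u)}$ with $v_i$ the primitive inward normal and $\ell_i$ the defining affine function. To apply this despite the singularities of $\XG$, I would pass to the small resolution $\YG$: since $\pi$ is an isomorphism over $\XG \setminus \Sing(\XG)$ with exceptional locus of codimension at least two, the facets of $\DeltaG$, and hence the Maslov-index-two disk count, are unaffected, while the Fano condition rules out disks of non-positive Maslov index and multiple-cover corrections. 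This produces exactly the asserted Laurent polynomial $\sum_{i=1}^m e^{\langle v_i, x\rangle} T^{\ell_i(u)}$ and, in parallel, the inclusion $H^1(L(u);\Lambda_0) \subset \scM(L(u))$.

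The delicate point, which is precisely the content of \cite{Nishinou-Nohara-Ueda_TDGCSPF}, is justifying that the moduli space of holomorphic disks bounded by $L(u)$ in the smooth fiber $\Gr(2,n)$ is identified, with multiplicities, with that of $L_0(u)$ in the central fiber through the degeneration, so that no disks are created or destroyed near $\Sing(\XG)$ and the open Gromov–Witten invariants are deformation invariant along the family. This is controlled by the smallness of $\pi$ together with the Fano property: the singular locus $\Sing(\XG)$ of Proposition \ref{pr:Sing_XG} is avoided by the relevant disks, and the absence of negative-Maslov bubbling keeps the count rigid. I expect this disk-continuation argument across the (possibly singular) total space to be the main obstacle, whereas once it is in place both conclusions follow formally from the toric statements.
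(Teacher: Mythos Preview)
Your proposal is correct and follows exactly the paper's approach: the theorem is stated as a direct application of the general result from \cite{Nishinou-Nohara-Ueda_TDGCSPF, Nishinou-Nohara-Ueda_PFTD}, with the hypotheses verified by Theorem \ref{th:toricdeg_nohara}, Proposition \ref{cor:Fano}, and Proposition \ref{prop:small_resol}. Your sketch in fact unpacks more of the internal mechanism (gradient-Hamiltonian transport, Cho--Oh/FOOO disk counting, and deformation invariance via the small resolution) than the paper itself, which simply invokes the cited references.
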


By setting $y_a = e^{x_a}T^{u_a}$ and $Q = T^{|\bld{r}|}$, 
the potential function can be regarded as a Laurent polynomial
in $y_a$ and $Q$.
Since $\Delta_{\Gamma}$ is given by triangle inequalities
\begin{align*}
  - u(a) + u(b) + u(c) &\ge 0, \\
    u(a) - u(b) + u(c) &\ge 0, \\
    u(a) + u(b) - u(c) &\ge 0
\end{align*}
in terms of the ``length coordinates'' defined in (\ref{eq:length_fn}),
the potential function can be written as
\[
  \po_{\Gamma} = \sum_{\text{triangles}} 
        \left( 
          \frac{y(b) y(c)}{y(a)}
        + \frac{y(a) y(c)}{y(b)}
        + \frac{y(a) y(b)}{y(c)}
        \right),
\]
where $y(a)$ is a Laurent monomial in $y_{e_i}^{1/2}$,
$y_{d_{\alpha}}$ and $Q$ defined by
\[
  y(a) = \begin{cases}
    y_{e_i}^{1/2}, & a = e_i \quad (i=1, \dots, n-1),\\
    Q (y_{e_1} \dots y_{e_{n-1}})^{-1/2}, & a = e_n, \\
    y_{d_{\alpha}}^{-1} \prod_{i \in I_{\alpha}} y_{e_i}^{1/2},
    & a = d_{\alpha},
  \end{cases}
\]
and the sum is taken over all triangles 
in the triangulation $\Gamma$.
This is the potential function
given in \eqref{eq:potential_function}, and
Theorem \ref{th:potential_function} is proved.

Now we consider the relation between the potential functions 
$\po_{\Gamma_1}$ and $\po_{\Gamma_2}$ corresponding to 
two different triangulations $\Gamma_1$ and $\Gamma_2$.
It suffices to consider the case where $\Gamma_1$ is transformed into $\Gamma_2$
by a single Whitehead move replacing a diagonal $d$ with $d'$.
Recall that the piecewise linear transform  (\ref{eq:PL_transf}) sends 
$\Delta_{\Gamma_1}$ to $\Delta_{\Gamma_2}$.
We define its {\it geometric lift} in the sense of
\cite{Berenstein-Zelevinsky_TPM} by
\begin{equation}
  y(d')  = y(d) \cdot
   \frac{y(a_1) y(a_4) + y(a_2) y(a_3)}
        {y(a_1) y(a_2) + y(a_3) y(a_4)},
   \label{eq:geom_lift}
\end{equation}
which means that the tropicalization of this map gives 
the piecewise-linear transformation (\ref{eq:PL_transf}).
From (\ref{eq:PL_transf}) or a direct computation, 
(\ref{eq:geom_lift}) is also written as
\begin{equation}
   y(d') = y(d) \cdot 
   \frac {\ \displaystyle{\frac{y(a_1)}{y(a_2)} + \frac{y(a_2)}{y(a_1)} 
     + \frac{y(a_3)}{y(a_4)} + \frac{y(a_4)}{y(a_3)} \ }}
   {\displaystyle{ \frac{y(a_1)}{y(a_4)} + \frac{y(a_4)}{y(a_1)} 
     + \frac{y(a_2)}{y(a_3)} + \frac{y(a_3)}{y(a_2)} }}.
  \label{eq:geom_lift2}
\end{equation}
Then Theorem \ref{th:potential} gives the following:

\begin{cor} \label{cr:geometric_lift}
  Under the above situation, 
  the potential functions $\po_{\Gamma_1}$ and $\po_{\Gamma_2}$ 
  are related
  by the rational map \eqref{eq:geom_lift}.
\end{cor}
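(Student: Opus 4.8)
The plan is to reduce the stated identity to a single algebraic computation localized in the quadrilateral where the Whitehead move occurs. First I would use the fact that \eqref{eq:potential_function} expresses $\po_\Gamma$ as a sum over the triangles of $\Gamma$, and that $\Gamma_1$ and $\Gamma_2$ share every triangle except the two lying in the quadrilateral with sides $a_1,a_2,a_3,a_4$: in $\Gamma_1$ these are the triangles with edges $(a_1,a_2,d)$ and $(a_3,a_4,d)$, while in $\Gamma_2$ they are $(a_2,a_3,d')$ and $(a_1,a_4,d')$. Each $a_i$ is an edge belonging to both triangulations, so its monomial $y(a_i)$ is the same function in $\po_{\Gamma_1}$ and in $\po_{\Gamma_2}$, and the monomials $y(d)$ and $y(d')$ occur only in the quadrilateral terms of $\po_{\Gamma_1}$ and $\po_{\Gamma_2}$ respectively. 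Consequently the two potentials agree term-by-term outside the quadrilateral, and since \eqref{eq:geom_lift} alters only $y(d')$, it leaves this common part untouched; it therefore suffices to show that the quadrilateral contributions $W_1$ and $W_2$ coincide after the substitution.

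Next I would group the quadrilateral terms according to the power of the diagonal monomial. Setting
\begin{align*}
 A &= \frac{y(a_1)}{y(a_2)} + \frac{y(a_2)}{y(a_1)} + \frac{y(a_3)}{y(a_4)} + \frac{y(a_4)}{y(a_3)}, \\
 B &= y(a_1)y(a_2) + y(a_3)y(a_4),
\end{align*}
the $\Gamma_1$-contribution collects as $W_1 = y(d)\,A + B/y(d)$. Likewise, with
\begin{align*}
 A' &= \frac{y(a_2)}{y(a_3)} + \frac{y(a_3)}{y(a_2)} + \frac{y(a_1)}{y(a_4)} + \frac{y(a_4)}{y(a_1)}, \\
 B' &= y(a_2)y(a_3) + y(a_1)y(a_4),
\end{align*}
the $\Gamma_2$-contribution is $W_2 = y(d')\,A' + B'/y(d')$.

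The key step is the polynomial identity $AB = A'B'$: both products expand to $\sum_{i} y(a_i)^2 + \sum_{i} \bigl(\prod_{j\ne i} y(a_j)\bigr)/y(a_i)$, which is manifestly symmetric in $a_1,\dots,a_4$. This identity is precisely the coincidence of the two presentations of the geometric lift, since \eqref{eq:geom_lift} reads $y(d') = y(d)\,(B'/B)$ whereas \eqref{eq:geom_lift2} reads $y(d') = y(d)\,(A/A')$. Granting it, I substitute \eqref{eq:geom_lift} in the form $y(d') = y(d)\,A/A'$ into $W_2$: the first term becomes $y(d')\,A' = y(d)\,A$, and the second becomes $B'/y(d') = A'B'/(y(d)\,A) = AB/(y(d)\,A) = B/y(d)$. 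Hence $W_2 = y(d)\,A + B/y(d) = W_1$, as required.

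The computation itself is short, and the main point to get right is the reduction in the first paragraph: one must check that the side monomials $y(a_i)$ are literally the same functions in both triangulations, so that the change of variables affects only $y(d)\leftrightarrow y(d')$ and transports the remainder of $\po_\Gamma$ unchanged. Once that is secured, grouping by the diagonal monomial and invoking $AB=A'B'$ completes the proof.
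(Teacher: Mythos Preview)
Your proposal is correct and follows essentially the same approach as the paper: both isolate the contribution of the two triangles in the quadrilateral (the paper writes the remainder as $F(y)$), group the quadrilateral terms by the power of the diagonal monomial, and then invoke the substitution. You are more explicit than the paper in verifying the key algebraic identity $AB=A'B'$, which is exactly the equivalence between \eqref{eq:geom_lift} and \eqref{eq:geom_lift2} that the paper states without proof just before the corollary.
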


\begin{proof}
The potential function corresponding to $\Gamma_1$ is written as
\[
  \begin{split}
    \po_{\Gamma_1} = y(d) \left( 
     \frac{y(a_1)}{y(a_2)} + \frac{y(a_2)}{y(a_1)} 
     + \frac{y(a_3)}{y(a_4)} + \frac{y(a_4)}{y(a_3)}
     \right) \\ + 
     \frac {y(a_1) y(a_2) + y(a_3) y(a_4)}{y(d)} + F(y),
   \end{split}
\]
where $F(y)$ is a Laurent polynomial which does not contain $y_d$.
Since the triangle inequalities for $\Gamma_1$ and $\Gamma_2$ are the same 
except for those containing $d$ and $d'$,
the potential function for $\Gamma_2$ is written as
\[
  \begin{split}
    \po_{\Gamma_2} = y(d') \left(
      \frac{y(a_1)}{y(a_4)} + \frac{y(a_4)}{y(a_1)} 
      + \frac{y(a_2)}{y(a_3)} + \frac{y(a_3)}{y(a_2)}
      \right)  \\ +
      \frac{y(a_1) y(a_4) + y(a_2) y(a_3)}{y(d')} + F(y).
    \end{split}
\]
Hence the coordinate change \eqref{eq:geom_lift}
transforms $\po_{\Gamma_1}$ into $\po_{\Gamma_2}$.
\end{proof}

Theorem \ref{th:geometric_lift} is a direct consequence
of Corollary \ref{cr:geometric_lift}.


\begin{rem}
In the case of flag manifolds,
Rusinko \cite{Rusinko}
proved a similar result for string polytopes.
\end{rem}

\begin{expl}
  Consider a triangulation $\Gamma$ 
  of a quadrilateral given by $d= e_1 + e_2$.
  The triangle inequalities for $\Gamma$ are
  \begin{align*}
    \langle (1,0,0,-1), (u_{e_1}, u_{e_2}, u_{e_3}, u_{d}) \rangle
      \phantom{ \,\,\, + | \bld{r} | } &\ge 0,\\
    \langle (0,1,0,-1), (u_{e_1}, u_{e_2}, u_{e_3}, u_{d}) \rangle
      \phantom{ \,\,\, + | \bld{r} | } &\ge 0,\\
    \langle (0,0,0,1), (u_{e_1}, u_{e_2}, u_{e_3}, u_{d}) \rangle
      \phantom{ \,\,\, + | \bld{r} | } &\ge 0,\\
    \langle (0,0,-1,-1), (u_{e_1}, u_{e_2}, u_{e_3}, u_{d}) \rangle
      + | \bld{r} | &\ge 0,\\
    \langle (-1,-1,0,1), (u_{e_1}, u_{e_2}, u_{e_3}, u_{d}) \rangle
      + | \bld{r} | &\ge 0,\\
    \langle (1,1,1,-1), (u_{e_1}, u_{e_2}, u_{e_3}, u_{d}) \rangle
      - | \bld{r} | &\ge 0.
  \end{align*}
  Thus the potential function is
  \[
    \po_{\Gamma} 
    = \frac{y_{e_1}}{y_{d}} + \frac{y_{e_2}}{y_{d}} + y_{d}
     + \frac{Q}{y_{e_3} y_{d}} + \frac{Q y_{d}}{y_{e_1} y_{e_2}}
     + \frac{y_{e_1} y_{e_2} y_{e_3}}{Q y_{d}} .
  \]
  After the coordinate change
  \[
    y^{(1)}_1 = y_{e_1}, \quad
    y^{(2)}_1 = \frac{y_{e_1} y_{e_2}}{y_d}, \quad
    y^{(2)}_2 = y_{d}, \quad
    y^{(3)}_2 = \frac{y_{e_1}y_{e_2}y_{e_3}}Q,
  \]
  which is a geometric lift of (\ref{eq:coord_GC}), 
  the potential function becomes
  \begin{equation}
    \po_{\Gamma} = y^{(2)}_2 + \frac{y^{(1)}_1}{y^{(2)}_2} 
    + \frac{y^{(3)}_2}{y^{(2)}_2} + \frac{y^{(2)}_1}{y^{(1)}_1}
    + \frac{y^{(2)}_1}{y^{(3)}_2} + \frac{Q}{y^{(2)}_1}.
  \label{eq:potential_GC}
  \end{equation}
\end{expl}

\begin{rem}
The potential function in \eqref{eq:potential_GC} coincides
with the superpotential 
in \cite[(B.2)]{Eguchi-Hori-Xiong_GQC}.
More generally, for every $n$, the potential function
corresponding to the caterpillar
coincides with \cite[(B.25)]{Eguchi-Hori-Xiong_GQC}
after a coordinate change 
corresponding to \eqref{eq:coord_GC}.
\end{rem}


\bibliographystyle{amsalpha}
\bibliography{bibs}

\ \vspace{0mm} \\

\noindent
Yuichi Nohara \\
Department of Mathematics, 
School of Science and Technology, \\
Meiji University\\
1-1-1 Higashi-Mita, Tama-ku, Kawasaki-shi, 
Kanagawa 214-8571 , Japan\\
{\em e-mail address}\ : \  nohara@meiji.ac.jp
\ \vspace{0mm} \\

\noindent
Kazushi Ueda \\
Graduate School of Mathematical Sciences, 
The University of Tokyo,\\
3-8-1 Komaba,
Meguro-ku,
Tokyo,
153-8914,
Japan\\
{\em e-mail address}\ : \  kazushi@ms.u-tokyo.ac.jp
\ \vspace{0mm} \\

\end{document}